\DeclareMathOperator{\D}{d}
\DeclareMathOperator{\I}{Im}
\DeclareMathOperator{\R}{Re}
\def\qed{\hfill$ \blacksquare$}
\def\eor{\hfill$ \square$}
\newtheorem{theorem}{Theorem}[section]
\newtheorem{lemma}[theorem]{Lemma}
\newtheorem{proposition}[theorem]{Proposition}
\newtheorem{corollary}[theorem]{Corollary}
\newenvironment{proof}[1][Proof]{\begin{trivlist}
\item[\hskip \labelsep {\bfseries #1}]}{\end{trivlist}}
\newenvironment{remark}[1][Remark]{\begin{trivlist}
\item[\hskip \labelsep {\bfseries #1}]}{\end{trivlist}}
\begin{document}

\selectlanguage{english}
\title{\textsf{\textbf{ Legendre Functions, Spherical Rotations,\\ and Multiple Elliptic Integrals}}}\author{Yajun Zhou
\\\begin{small}\textsc{Program in Applied and Computational Mathematics, Princeton University, Princeton, NJ 08544}\end{small}
}
\date{}
\maketitle

\def\abstractname{}
\begin{abstract}
     \noindent A closed-form formula is derived for the generalized Clebsch-Gordan integral $ \int_{-1}^1 {[}P_{\nu}(x){]}^2P_{\nu}(-x)\D x$, with $ P_\nu$ being the Legendre function of arbitrary complex degree $ \nu\in\mathbb C$. The finite Hilbert transform of $ P_{\nu}(x)P_{\nu}(-x),-1<x<1$ is evaluated. An analytic proof is provided for a recently conjectured identity $\int_0^1[\mathbf K( \sqrt{1-k^2} )]^{3}\D k=6\int_0^1[\mathbf K(k)]^2\mathbf K( \sqrt{1-k^2} )k\D k=[\Gamma(\frac{1}{4})]^{8}/(128\pi^2) $ involving complete elliptic integrals of the first kind $ \mathbf K(k)$ and Euler's gamma function $ \Gamma(z)$. \end{abstract}
\setcounter{section}{-1}
\section{Introduction}

In a wide range of mathematical and physical contexts, one needs to evaluate \textit{multiple elliptic integrals}  where the integrands involve one or more factors of  complete elliptic integrals.  As a glimpse of some recent applications in number theory, high-energy physics, statistical mechanics and probability theory, we   mention  automorphic Green's functions  \cite{KontsevichZagier},  multi-loop Feynman diagrams
\cite{Bailey2008}, lattice Green's functions \cite{Zucker2011} and short uniform random walks \cite{BNSW2011,BSWZ2012,Wan2012}. Various  techniques, of algebraic, geometric, combinatoric or analytic flavor, have been developed to express these multiple elliptic integrals in terms of familiar mathematical constants and special values of Euler's gamma function \cite{KontsevichZagier,Bailey2008,Zucker2011,BNSW2011,BSWZ2012,Wan2012}.

When the integrands contain the products of three or more complete elliptic integrals, the evaluation can become analytically challenging. For example, the following relation\begin{align*}\frac{[\Gamma(\frac{1}{4})]^{8}}{128\pi^{2}}\overset{?}{=}\int_0^1\left[\mathbf K\left( \sqrt{1-k^2} \right)\right]^{3}\D k\overset{?}{=}6\int_0^1[\mathbf K(k)]^2\mathbf K\left( \sqrt{1-k^2} \right)k\D k\end{align*}has been recently conjectured by J.~G. Wan in \cite{Wan2012} based on numerical evidence.

 In this work, we solve Wan's conjecture and compute a related family of   multiple elliptic integrals with sophisticated appearance. We employ an  approach that is probably overlooked in previous literature: connecting elliptic integrals to spherical functions and spherical rotations. Concretely speaking, the complete elliptic integral of the first kind $ \mathbf K(k):=\int_0^{\pi/2}(1-k^2\sin^2\theta)^{-1/2}\D \theta$ is related to fractional degree Legendre functions $ P_{-1/2},P_{-1/3},P_{-1/4},P_{-1/6}$, where \begin{align*}P_\nu(1)=1;\quad P_\nu(\cos\theta):=\frac{2}{\pi}\int_0^\theta\frac{\cos\frac{(2\nu+1)\beta}{2}}{\sqrt{2(\cos\beta-\cos\theta)}}\D\beta,\quad \theta\in(0,\pi),\nu\in\mathbb C\end{align*}extend the familiar Legendre polynomials \[P_\ell(x)=\frac{1}{2^\ell\ell!}\frac{\D^\ell}{\D x^\ell}[(x^2-1)^\ell],\quad \ell\in\mathbb Z_{\geq0}\]to arbitrary complex degree $\nu\in\mathbb C $.
By a modest generalization of the ``spherical harmonic coupling'' for Legendre polynomials (Clebsch-Gordan theory) to Legendre functions of arbitrary degree, we are able to prove the integral identity\begin{align*}\int_{-1}^1 {[}P_{\nu}(x){]}^2P_{\nu}(-x)\D x=\lim_{z\to\nu}\frac{1+2\cos(\pi z)}{3}\frac{\pi\Gamma\left(\frac{z+1 }{2}\right) \Gamma\left(\frac{3 z+2 }{2}\right)}{\left[ \Gamma\left(\frac{1-z}{2}\right)\right]^2\left[ \Gamma\left(\frac{z+2 }{2}\right)\right]^3 \Gamma\left(\frac{3 z+3 }{2}\right)},\quad\nu\in\mathbb  C\end{align*}
and compute the finite Hilbert transform\begin{align*}\mathcal
P\int_{-1}^1\frac{2P_{\nu}(\xi)P_{\nu}(-\xi)}{\pi(x-\xi)}\D \xi=\frac{[P_{\nu}(x)]^2-[P_{\nu}(-x)]^2}{\sin(\nu\pi)},\quad\nu\in\mathbb  C\smallsetminus\mathbb Z.
\end{align*} The pair of formulae above, along with a couple of geometric transformations related to rotations on spheres,  allow us to evaluate a slew of multiple elliptic integrals, which embody Wan's conjectural identity as a special case.

The article is organized as follows. In \S\ref{sec:Hansen_Heine}, we review some classical results that express fractional degree Legendre functions as complete elliptic integrals, and give a new proof of Hobson's coupling formula for two Legendre functions, based on scaling limits. We carry on the scaling analysis in \S\ref{sec:G_CG}, so as to interpolate the standard Clebsch-Gordan integrals for Legendre polynomials into a closed-form expression for $ \int_{-1}^1 {[}P_{\nu}(x){]}^2P_{\nu}(-x)\D x$ with arbitrary complex degree $\nu\in\mathbb C$. This is followed by a geometric intermezzo on spherical rotations in \S\ref{sec:rotations},  providing tools to  transform one multiple elliptic integral into another, as well as  preparing some integral identities for   \S\ref{sec:Tricomi}. The finite Hilbert transform (also known as the Tricomi transform) of the function  $ P_{\nu}(x)P_{\nu}(-x)$ is  studied in \S\ref{sec:Tricomi}, in the spirit of  ``spherical harmonic coupling''. Lastly, \S\ref{sec:outlook} serves as a perspective on subsequent works on multiple elliptic integrals related to, or motivated by Legendre functions.  \section{\label{sec:Hansen_Heine}Fractional Degree Legendre Functions and Hansen-Heine Scaling Limits}The Legendre  functions of the first kind can be defined via the Mehler-Dirichlet integral\begin{align}\label{eq:MD_Pnu}P_\nu(1)=1;\quad P_\nu(\cos\theta):=\frac{2}{\pi}\int_0^\theta\frac{\cos\frac{(2\nu+1)\beta}{2}}{\sqrt{2(\cos\beta-\cos\theta)}}\D\beta,\quad \theta\in(0,\pi),\nu\in\mathbb C.\end{align}Alternatively, we may characterize  $ P_\nu(x),-1<x\leq1$ as the unique $ C^2(-1,1)$ solution to \begin{align}\label{eq:diff_def_Pnu}\frac{\D}{\D x}\left[ (1-x^2 )\frac{\D f(x)}{\D x}\right]+\nu(\nu+1) f(x)=0,\quad f(1)=1.\end{align}By either definition, we have  $ P_\nu(x)\equiv P_{-\nu-1}(x)$.
Later in this work, we shall also use the Legendre  functions of the second kind, given by \begin{align}\label{eq:def_Qnu}Q_\nu(x):=\frac{\pi}{2\sin(\nu\pi)}[\cos(\nu\pi)P_\nu(x)-P_{\nu}(-x)],\quad \nu\in\mathbb C\smallsetminus\mathbb Z;\quad Q_n(x):=\lim_{\nu\to n}Q_\nu(x),\quad n\in\mathbb Z_{\geq0}\end{align}for $ -1<x<1$. By convention, the Legendre function of the second kind $ Q_n(x)$ is undefined for any negative integer degree $ n\in\mathbb Z_{<0}$.
While both $ P_\nu(x)$ and $ Q_\nu(x)$ solve the same Legendre differential equation of degree $ \nu\in\mathbb C\smallsetminus\mathbb Z_{<0}$, the function $ Q_\nu(x)$ blows up logarithmically as $ x\to1-0^+$, unlike the natural boundary condition for $ P_\nu(1)\equiv1$.

Additionally, the Legendre  function of the first kind can be represented as the Laplace integral under some restrictions:\begin{align}P_\nu(z):=\frac{1}{2\pi}\int_0^{2\pi}e^{\nu\log(z+i\sqrt{1-z^{2}}\cos\phi)}\D\phi\equiv\frac{1}{2\pi}\int_0^{2\pi}e^{\nu\log(z-i\sqrt{1-z^{2}}\cos\phi)}\D\phi,\quad \R z>0,\nu\in\mathbb C.\label{eq:Laplace_Pnu}\end{align}

\subsection{Kleiber-Ramanujan Correspondence}
Highlights of the relation $ P_{-1/2}(\cos\theta)=(2/\pi)\mathbf K(\sin(\theta/2))$ date back to J. Kleiber's  posthumous publication  \cite{Kleiber1893} in 1893, where systematic studies were carried out on connections between complete elliptic integrals and Legendre functions $ P_{(2n+1)/2},n\in\mathbb Z$, the latter of which are useful for solving differential equations in toroidal coordinates \cite[Ref.][\S\S253-258]{Hobson1931}.

In his seminal paper \cite{RamanujanPi} and legendary notebooks \cite[Ref.][Chap.~33]{RN5}, S.~Ramanujan developed rich and elegant theories relating  $ P_{-1/3}, P_{-1/4},P_{-1/6}$ to complete elliptic integrals of the first kind.

We recapitulate the aforementioned classical results in the following short lemma. \begin{lemma}[Some Fractional Degree Legendre Functions]\label{lm:P_nu_spec}\begin{enumerate}[label=\emph{(\alph*)}, ref=(\alph*), widest=a]\item We have the following identities for $ 0\leq\theta<\pi$:\begin{align}P_{-1/2}(\cos\theta)={}&\frac{2}{\pi}\mathbf K\left( \sin\frac{\theta}{2}\right)=\frac{2}{\pi}\int_0^{\pi/2}\frac{\D\psi}{\sqrt{\smash[b]{1-\sin^2(\theta/2)\sin^2\psi}}},\label{eq:P_half_sin2}\\ P_{-1/4}(\cos\theta)={}&\frac{2}{\pi}\frac{1}{\sqrt{\smash[b]{1+\sin(\theta/2)}}}\mathbf K\left( \sqrt{\frac{2\sin(\theta/2)}{1+\sin(\theta/2)}} \right)=\frac{2}{\pi}\int_0^{\pi/2}\frac{\D\psi}{\sqrt{\smash[b]{1-\sin^2(\theta/2)\sin^4\psi}}},\label{eq:P_quarter_sin4}\end{align}\item For a parameter $ p\in[0,1)$, the following relations \begin{subequations}\begin{align}P_{-1/6}\left( 1-2\frac{27p^{2}(1+p)^2}{4(1+p+p^{2})^{3}} \right)={}&\sqrt{\frac{1+p+p^{2}}{1+2p}}P_{-1/2}\left( 1-2\frac{p(2+p)}{1+2p} \right)=\frac{2}{\pi}\sqrt{\frac{1+p+p^{2}}{1+2p}}\mathbf K\left( \sqrt{\frac{p(2+p)}{1+2p}} \right),\label{eq:P_sixth_a}\\P_{-1/6}\left( 2\frac{27p^{2}(1+p)^2}{4(1+p+p^{2})^{3}}-1 \right)={}&\sqrt{\frac{1+p+p^{2}}{1+2p}}P_{-1/2}\left( 2\frac{p(2+p)}{1+2p} -1\right)=\frac{2}{\pi}\sqrt{\frac{1+p+p^{2}}{1+2p}}\mathbf K\left( \sqrt{\frac{1-p^{2}}{1+2p}} \right);\label{eq:P_sixth_b}\end{align}\end{subequations}\begin{subequations}hold for degree $ \nu=-1/6$, and the transformations\begin{align}P_{-1/3}\left( 1-2\frac{27p^{2}(1+p)^2}{4(1+p+p^{2})^{3}} \right)={}&\frac{1+p+p^2}{\sqrt{1+2p}}P_{-1/2}\left( 1-2\frac{p^{3}(2+p)}{1+2p} \right)=\frac{2}{\pi}\frac{1+p+p^2}{\sqrt{1+2p}}\mathbf K\left( \sqrt{\frac{p^3(2+p)}{1+2p}} \right),\label{eq:P_third_a}\\P_{-1/3}\left( 2\frac{27p^{2}(1+p)^2}{4(1+p+p^{2})^{3}} -1\right)={}&\frac{1+p+p^2}{\sqrt{3+6p}}P_{-1/2}\left( 2\frac{p^{3}(2+p)}{1+2p} -1\right)=\frac{2}{\pi}\frac{1+p+p^2}{\sqrt{3+6p}}\mathbf K\left( \sqrt{\frac{(1-p)(1+p)^{3}}{1+2p}} \right).\label{eq:P_third_b}\end{align}\end{subequations}apply to  degree $ \nu=-1/3$.\end{enumerate}\end{lemma}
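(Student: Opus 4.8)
The plan is to route every assertion through the Gauss hypergeometric representation of the Legendre function. From the defining differential equation \eqref{eq:diff_def_Pnu} (or directly from the Mehler--Dirichlet integral \eqref{eq:MD_Pnu}), the unique $C^2(-1,1)$ solution normalized by $P_\nu(1)=1$ is
\begin{align*}
P_\nu(\cos\theta)={}_2F_1\!\left(-\nu,\nu+1;1;\sin^2\tfrac{\theta}{2}\right),\qquad 0\le\theta<\pi,
\end{align*}
so that each claimed identity becomes a transformation formula for ${}_2F_1$, read against the dictionary $\mathbf K(k)=\tfrac{\pi}{2}\,{}_2F_1(\tfrac12,\tfrac12;1;k^2)$.

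For part \emph{(a)} I would first settle the right-hand (``$=$'') integral representations. Expanding $(1-m\sin^{2k}\psi)^{-1/2}$ by the binomial series and integrating term by term with $\int_0^{\pi/2}\sin^{2j}\psi\,\D\psi=\tfrac{\pi}{2}(\tfrac12)_j/j!$ gives, for $k=1$, exactly $\tfrac{\pi}{2}\,{}_2F_1(\tfrac12,\tfrac12;1;m)$, and for $k=2$, after the Pochhammer identity $(\tfrac12)_{2n}/(2n)!=(\tfrac14)_n(\tfrac34)_n/[n!(\tfrac12)_n]$, exactly $\tfrac{\pi}{2}\,{}_2F_1(\tfrac14,\tfrac34;1;m)$; with $m=\sin^2(\theta/2)$ these are $P_{-1/2}$ and $P_{-1/4}$. (For $P_{-1/2}$ the same fact drops out of \eqref{eq:MD_Pnu}, since $2\nu+1=0$ kills the numerator and the substitution $\sin(\beta/2)=\sin(\theta/2)\sin\psi$ produces Legendre's normal form.) It then remains to convert the $\sin^4\psi$ integral to a single $\mathbf K$: factoring $1-s^2\sin^4\psi=(1-s\sin^2\psi)(1+s\sin^2\psi)$ with $s=\sin(\theta/2)$ and applying the standard quadratic (Landen/Gauss) reduction of $\int_0^{\pi/2}\D\psi/\sqrt{(1-s\sin^2\psi)(1+s\sin^2\psi)}$ yields $(1+s)^{-1/2}\mathbf K(\sqrt{2s/(1+s)})$; equivalently this is the classical quadratic transformation ${}_2F_1(\tfrac14,\tfrac34;1;s^2)=(1+s)^{-1/2}\,{}_2F_1(\tfrac12,\tfrac12;1;\tfrac{2s}{1+s})$, which can be checked coefficient-wise.

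For part \emph{(b)} the governing identities are cubic. Writing $\sin^2(\Theta/2)=\frac{27p^2(1+p)^2}{4(1+p+p^2)^3}$ for the degenerate arguments, the relations \eqref{eq:P_sixth_a}--\eqref{eq:P_third_b} are equivalent to transformations carrying ${}_2F_1(\tfrac16,\tfrac56;1;\cdot)$ and ${}_2F_1(\tfrac13,\tfrac23;1;\cdot)$ into ${}_2F_1(\tfrac12,\tfrac12;1;\cdot)$ --- precisely the signature-$6$ and signature-$3$ transformations of Ramanujan's alternative theories of elliptic integrals. I would establish these either (i) by invoking the classical cubic modular transformations (Goursat's list, or the Ramanujan--Berndt formulation) once the rational parametrizations and the algebraic prefactors $\sqrt{(1+p+p^2)/(1+2p)}$ and $(1+p+p^2)/\sqrt{1+2p}$ are matched, or (ii) self-containedly by an ODE-pullback: substitute the rational change of variable into the Legendre equation \eqref{eq:diff_def_Pnu}, verify that both sides satisfy the same resulting second-order linear ODE in $p$, and pin down the two integration constants from the normalization at $p=0$ (where every argument tends to $1$, forcing the value $1$ by $P_\nu(1)=1$) together with one first derivative.

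The hard part is exactly this cubic step in \emph{(b)}. The map $p\mapsto\frac{27p^2(1+p)^2}{4(1+p+p^2)^3}$ is a degree-six cover with delicate ramification, so the pullback in (ii) requires careful algebra to confirm that the transformed equation is again hypergeometric with the exponent differences $1/6,5/6$ (respectively $1/3,2/3$) demanded by the target degree, and that the algebraic prefactor cancels precisely the apparent singularities the map introduces. A pervasive secondary chore is branch- and domain-tracking: the representations and the factor $(1+s)^{-1/2}$ presuppose $0\le\theta<\pi$ and $p\in[0,1)$, so that every radicand is nonnegative and every modulus lies in $[0,1)$; the boundary cases are then recovered by continuity.
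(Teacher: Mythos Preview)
Your proposal is correct. For part~(a) you take a genuinely different route from the paper: the paper simply verifies that the right-hand expressions satisfy the Legendre differential equation \eqref{eq:diff_def_Pnu} with the correct boundary value, and then links the two forms in \eqref{eq:P_quarter_sin4} by the explicit substitution $\vartheta=\arctan(\sqrt{1+\sin(\theta/2)}\tan\psi)$; you instead work forward through the hypergeometric representation, expand the $\sin^{2k}\psi$ integrands termwise, and identify the resulting ${}_2F_1$ via the Pochhammer duplication, then appeal to the Kummer quadratic transformation for the $\mathbf K$ form. Your approach is more constructive and makes the hypergeometric identity explicit; the paper's ODE check is shorter but purely a verification. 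For part~(b) the two approaches essentially coincide: the paper also advocates the ODE-pullback route (``a routine, though time-consuming procedure of Legendre differential equations'') and likewise points to Ramanujan's signature-$3$ and signature-$6$ theories as the conceptual source, so your options (i) and (ii) match the paper's treatment.
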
\begin{proof}
\begin{enumerate}[label=(\alph*),widest=a]\item\label{itm:P_half_quarter}  Let $ \mathbf E(k)=\int_0^{\pi/2}\sqrt{\smash[b]{1-k^2\sin^2\phi}}\D\phi$ be the complete elliptic integral of the second kind, then we have the differential relations \begin{align*}\frac{\D\mathbf K(k)}{\D k}=\frac{\mathbf E(k)}{k(1-k^2)}-\frac{\mathbf K(k)}{k},\quad \frac{\D\mathbf E(k)}{\D k}=\frac{\mathbf E(k)-\mathbf K(k)}{k}.\end{align*} It is thus  routine to check that the proposed forms of elliptic integrals in Eqs.~\ref{eq:P_half_sin2} and \ref{eq:P_quarter_sin4} satisfy the said Legendre differential equations and natural boundary conditions (Eq.~\ref{eq:diff_def_Pnu}).

The last integral in Eq.~\ref{eq:P_half_sin2} is the standard definition for the complete elliptic integral of the first kind.  With a variable substitution $ \vartheta=\arctan(\sqrt{1+\sin(\theta/2)}\tan\psi)$ \cite[Ref.][p.~105]{RN3}, one can convert the last expression in Eq.~\ref{eq:P_quarter_sin4} into the standard form. \item

As in the proof of part \ref{itm:P_half_quarter}, one can verify the relations between the respective fractional degree Legendre functions and complete elliptic integrals by a routine, though time-consuming procedure of Legendre differential equations.

Equations~\ref{eq:P_sixth_a} and \ref{eq:P_sixth_b} have appeared in Ramanujan's work \cite[Ref.][pp.~161-163]{RN5} on the elliptic function theory for signature~6.
As suggested by \cite{RN5}, Eqs.~\ref{eq:P_sixth_a}, \ref{eq:P_sixth_b} can be constructed from first principles, using   a hypergeometric transformation that identifies $P_{-1/6}(\cos\theta)={_2F_1}\left( \left.\begin{smallmatrix} 1/6,5/6\\1 \end{smallmatrix}\right|\sin^2\frac{\theta}{2} \right),0\leq\theta\leq\pi/2$ with the Fricke-Klein function $ _2F_1\left( \left.\begin{smallmatrix} 1/12,5/12\\1 \end{smallmatrix}\right|\sin^2\theta\right),0\leq\theta\leq\pi/2$  \cite[Ref.][p.~334]{FrickeKlein}, the latter of which has a well-known connection to complete elliptic integrals of the first kind~\cite{KontsevichZagier}.

We may combine Eqs.~\ref{eq:P_sixth_a} and \ref{eq:P_sixth_b} into a single formula\begin{align}P_{-1/6}\left( \frac{x(9-x^{2})}{(3+x^{2})^{3/2}} \right)=\sqrt{\frac{\sqrt{3+x^2}}{2}}P_{-1/2}(x)=\frac{2}{\pi}\sqrt{\frac{\sqrt{3+x^2}}{2}}\mathbf K\left( \sqrt{\frac{1-x}{2}} \right),\quad x\in(-1,1],\label{eq:P_sixth_unified}\end{align}a form that is handier for conversion between integrals involving $ P_{-1/6}$ and $\mathbf K $.

Ramanujan's studies of the elliptic function theory for signature~3  bring us Eqs.~\ref{eq:P_third_a}  and \ref{eq:P_third_b}. One may consult \cite[Ref.][pp.~112-114]{RN5} for the detailed theoretical arguments leading to the discovery of the algebraic relations between $ P_{-1/3}$ and $\mathbf K$.
 \qed\end{enumerate}\end{proof}

According to the recursion relation for Legendre functions $ (x^2-1)\D P_\nu(x)/\D x=\nu x P_\nu(x)-\nu P_{\nu-1}(x)$ and the symmetry $ P_{\nu}(x)=P_{-\nu-1}(x)$, one can relate some  fractional degree Legendre functions to complete elliptic integrals of  the first and the second kinds, such as \begin{align*}P_{1/2}(\cos\theta)={}&\frac{2}{\pi}\left[ 2 \mathbf E\left( \sin\frac{\theta}{2} \right)-\mathbf K\left( \sin\frac{\theta}{2} \right)\right],\quad P_{3/2}(\cos\theta)=\frac{2}{3\pi} \left[ 8\cos\theta   \mathbf E\left( \sin\frac{\theta}{2} \right)- (4 \cos \theta +1)\mathbf K\left( \sin\frac{\theta}{2} \right)\right],\notag\\P_{1/4}(\cos\theta)={}&\frac{2}{\pi}\left[ 2\sqrt{\smash[b]{1+\sin(\theta/2)}} \mathbf E\left( \sqrt{\frac{2\sin(\theta/2)}{1+\sin(\theta/2)}} \right)-\frac{1}{\sqrt{\smash[b]{1+\sin(\theta/2)}}}\mathbf K\left( \sqrt{\frac{2\sin(\theta/2)}{1+\sin(\theta/2)}} \right)\right],\notag\\P_{3/4}(\cos\theta)={}&\frac{2}{3\pi}\left[ 2\sqrt{\smash[b]{1+\sin(\theta/2)}} \mathbf E\left( \sqrt{\frac{2\sin(\theta/2)}{1+\sin(\theta/2)}} \right)-\frac{1-2\cos\theta}{\sqrt{\smash[b]{1+\sin(\theta/2)}}}\mathbf K\left( \sqrt{\frac{2\sin(\theta/2)}{1+\sin(\theta/2)}} \right)\right].\end{align*}\subsection{Hansen-Heine Scaling Limits and Hobson Coupling Formula}
By taking scaling limits of certain identities  for Legendre polynomials, one may extend their validity to Legendre functions of arbitrary complex degree. In this manner, one can compute some multiple elliptic integrals by interpolating Legendre polynomial identities, the latter of which often have clear motivations from the harmonic analysis on spheres.

As we may recall,  for positive integers $ m$, the associated Legendre functions of degree $ \nu$ and order $m$ are defined by \begin{align*}&&P^m_\nu(x):={}&(-1)^m(1-x^2)^{m/2}\frac{\D^m P_\nu(x)}{\D x^m},& P^{-m}_\nu(x):={}&(-1)^{m}\frac{\Gamma(\nu-m+1)}{\Gamma(\nu+m+1)}P^m_\nu(x);&&\notag\\&&Q^m_\nu(z):={}&(z^2-1)^{m/2}\frac{\D^m Q_\nu(z)}{\D z^m},&Q^{-m}_\nu(z):={}&\frac{\Gamma(\nu-m+1)}{\Gamma(\nu+m+1)}Q^m_\nu(x)\end{align*} One also writes $ P_\nu^0=P_\nu^{\vphantom{0}}$ and $ Q_\nu^0=Q_\nu^{\vphantom{0}}$. The spherical harmonics \[Y_{\ell m}(\theta,\phi):=\sqrt{\frac{2\ell+1}{4\pi}\frac{(\ell-m)!}{(\ell+m)!}}P^m_\ell(\cos\theta)e^{im\phi},\quad \ell\in\mathbb Z_{\geq0};m\in\mathbb Z\cap[-\ell,\ell]\] carry the Condon-Shortley
phase $ \overline {Y_{\ell m}(\theta,\phi)}=(-1)^m Y_{\ell,-m}(\theta,\phi)$.

In  the following, we show that the studies of Legendre functions of arbitrary degree can be built on   the Mehler-Dirichlet formula for spherical harmonics (see item 8.927 in \cite{GradshteynRyzhik} or \S4.5.4 in \cite{MOS}), which couples two normal vectors  $ \bm n_{1}(\theta_{1},\phi_{1})=(\sin\theta_{1}\cos\phi_{1},\sin\theta_{1}\sin\phi_{1},\cos\theta_{1})$ and  $ \bm n_{2}(\theta_{2},\phi_{2})=(\sin\theta_{2}\cos\phi_{2},\sin\theta_{2}\sin\phi_{2},\cos\theta_{2})$ on the unit sphere:  \begin{align}\label{eq:Mehler_Dirichlet}4\pi\sum _{\ell=0}^{\infty } \sum_{m=-\ell}^\ell\frac{\overline{Y_{\ell m}(\theta_{1},\phi_{1})}Y_{\ell m}(\theta_{2},\phi_{2})\cos[(\ell+\frac{1}{2})\beta]}{2\ell+1}=\sum _{\ell=0}^{\infty }P_\ell(\bm n_1\cdot\bm n_2) \cos[(\ell+\tfrac{1}{2})\beta]=\frac{\theta_{H}(\cos\beta-\bm n_1\cdot\bm n_2)}{\sqrt{2(\cos\beta-\bm n_1\cdot\bm n_2)}},\quad |\bm n_{1}|=|\bm n_{2}|=1.\end{align}Here, the Heaviside theta function is defined as $ \theta_H(x)=(1+\frac{x}{|x|})/2, x\in \mathbb R\smallsetminus\{0\}$, and the range of the angle $ \beta$ is $ (0,\pi)$.
\begin{lemma}[Hobson Coupling Formula]\label{lm:Legendre_interpolation}

 For $\theta_1,\theta_2\in[0,\pi),\nu\in\mathbb C$, the Legendre function of the first kind satisfies the Hobson coupling formula
\begin{align}
\frac{1}{2\pi}\int_0^{2\pi}P_\nu(\cos\theta_1\cos\theta_2+\sin\theta_1\sin\theta_2\cos\phi)\D\phi=\begin{cases}P_{\nu}(\cos\theta_1)P_\nu(\cos\theta_2), & \theta_{1}+\theta_2\leq\pi\ \\
P_{\nu}(-\cos\theta_1)P_\nu(-\cos\theta_2), & \theta_{1}+\theta_2\geq\pi
\end{cases}\label{eq:LegendreP_nu}
\end{align}  which specializes to  the following  integral identities for $ \theta_1,\theta_2\in[0,\pi)$:
\begin{align}
\frac{1}{4}\int_0^{2\pi}\mathbf K\left( \sin\frac{\varTheta}{2} \right)\D\phi={}&\begin{cases}\mathbf K\left( \sin\frac{\theta_1}{2} \right)\mathbf K\left( \sin\frac{\theta_2}{2} \right), & \theta_{1}+\theta_2\leq\pi \\[6pt]
\mathbf K\left( \cos\frac{\theta_1}{2} \right)\mathbf K\left( \cos\frac{\theta_2}{2} \right), & \theta_{1}+\theta_2\geq\pi
\end{cases}\label{eq:LegendreP_half}\\\frac{1}{4}\int_0^{2\pi}\mathbf K\left( \sqrt{\frac{2\sin(\varTheta/2)}{1+\sin(\varTheta/2)}} \right)\frac{\D\phi}{\sqrt{1+\sin(\varTheta/2)}}={}&\begin{cases}\frac{1}{\sqrt{\smash[b]{1+\sin(\theta_{1}/2)}}\sqrt{\smash[b]{1+\sin(\theta_{2}/2)}}}\mathbf K\left( \sqrt{\frac{2\sin(\theta_{1}/2)}{1+\sin(\theta_{1}/2)}} \right)\mathbf K\left( \sqrt{\frac{2\sin(\theta_{2}/2)}{1+\sin(\theta_{2}/2)}} \right), & \theta_{1}+\theta_2\leq\pi \\[6pt]
\frac{1}{\sqrt{\smash[b]{1+\cos(\theta_{1}/2)}}\sqrt{\smash[b]{1+\cos(\theta_{2}/2)}}}\mathbf K\left( \sqrt{\frac{2\cos(\theta_{1}/2)}{1+\cos(\theta_{1}/2)}} \right)\mathbf K\left( \sqrt{\frac{2\cos(\theta_{2}/2)}{1+\cos(\theta_{2}/2)}} \right), & \theta_{1}+\theta_2\geq\pi
\end{cases}\label{eq:LegendreP_quarter}
\end{align}
where $ \cos\varTheta=\cos\theta_1\cos\theta_2+\sin\theta_1\sin\theta_2\cos\phi$. In particular, we have
\begin{align}\left[ \mathbf K\left( \sin\frac{\theta}{2} \right) \right]^2={}&\int_0^{\pi/2}\mathbf K(\sin\theta\cos\phi)\D\phi,\!\!\!&\forall \theta\in&\left[0,\frac{\pi}{2}\right],\label{eq:LegendreP_half_spec}\tag{\ref{eq:LegendreP_half}$ ^\dagger$}\\ \mathbf K\left( \sin\frac{\theta}{2} \right) \mathbf K\left( \cos\frac{\theta}{2} \right)={}&\int_0^{\pi/2}\mathbf K\left( \sqrt{1-\sin\vphantom{1}^{2}\theta\cos\vphantom{1}^{2}\phi} \right)\D\phi,\!\!\!& \forall \theta\in&\left[0,\frac{\pi}{2}\right],\label{eq:LegendreP_half_spec'}\tag{\ref{eq:LegendreP_half}$ ^{\ddagger}$}\\\frac{1}{1+\sqrt{\vphantom{1}u}}\left[\mathbf K\left( \sqrt{\frac{2\sqrt{\vphantom{1}u}}{1+\sqrt{\vphantom{1}u}}} \right)\right]^2={}&\int_0^{\pi/2}\mathbf K\left( \sqrt{\frac{4\sqrt{u(1-u)}\cos\phi}{1+2\sqrt{u(1-u)}\cos\phi}} \right)\frac{\D\phi}{\sqrt{\smash[b]{1+2\sqrt{u(1-u)}\cos\phi}}},& \forall u\in&\left[0,\frac{1}{2}\right],\tag{\ref{eq:LegendreP_quarter}$ ^\dagger$}\label{eq:LegendreP_quarter_spec}\\\frac{\sqrt{2}}{1+\sqrt{\vphantom{1}u}}\mathbf K\left( \sqrt{\frac{2\sqrt{\vphantom{1}u}}{1+\sqrt{\vphantom{1}u}}} \right)\mathbf K\left( \sqrt{\frac{1-\sqrt{\vphantom{1}u}}{1+\sqrt{\vphantom{1}u}}} \right)={}&\int_0^{\pi/2}\mathbf K\left( \sqrt{\vphantom{\frac{\sqrt2}{\sqrt2}}\smash{\frac{2\sqrt{\smash[b]{1-4u(1-u)\cos^{\raisebox{0em}{$_2$}}\!\phi}}}{1+\sqrt{\smash[b]{1-4u(1-u)\cos^{\raisebox{0em}{$_2$}}\!\phi}}}}} \right)\frac{\D\phi}{\sqrt{\vphantom{{\vec W}}\smash{1+\sqrt{\smash[b]{1-4u(1-u)\cos^{\raisebox{0em}{$_2$}}\!\phi}}}}},\!\!\!& \forall u\in&\left( 0,1 \right).\tag{\ref{eq:LegendreP_quarter}$ ^{\ddagger}$}\label{eq:LegendreP_quarter_spec'}\end{align}\end{lemma}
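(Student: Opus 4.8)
The plan is to read off both members of \eqref{eq:LegendreP_nu} as \emph{generalized Mehler--Dirichlet cosine transforms} in the degree $\nu$, and to identify them through the integer-degree addition theorem already packaged in \eqref{eq:Mehler_Dirichlet}, together with the completeness of the system $\{\cos[(\ell+\frac12)\beta]\}_{\ell\geq0}$ on $L^2(0,\pi)$. Abbreviating $\cos\varTheta=\cos\theta_1\cos\theta_2+\sin\theta_1\sin\theta_2\cos\phi$ and extending the Mehler--Dirichlet integral \eqref{eq:MD_Pnu} over all of $(0,\pi)$ with the Heaviside cut-off, I would first write
\[
P_\nu(\cos\varTheta)=\frac{2}{\pi}\int_0^{\pi}\cos\Big[\Big(\nu+\tfrac12\Big)\beta\Big]\frac{\theta_H(\cos\beta-\cos\varTheta)}{\sqrt{2(\cos\beta-\cos\varTheta)}}\,\D\beta,
\]
insert it into the left-hand side of \eqref{eq:LegendreP_nu}, and swap the $\phi$- and $\beta$-integrations. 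This exhibits the left-hand side as $\frac{2}{\pi}\int_0^{\pi}\cos[(\nu+\frac12)\beta]\,G(\beta)\,\D\beta$ with the azimuthally averaged kernel $G(\beta)=\frac{1}{2\pi}\int_0^{2\pi}\theta_H(\cos\beta-\cos\varTheta)[2(\cos\beta-\cos\varTheta)]^{-1/2}\,\D\phi$; a short estimate of its edge and logarithmic behaviour shows $G\in L^2(0,\pi)$, supported in $\beta<\theta_1+\theta_2$.

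For the right-hand side, restricting first to $\theta_1+\theta_2\leq\pi$, I would represent each factor $P_\nu(\cos\theta_i)$ by \eqref{eq:MD_Pnu} and expand the product with $\cos A\cos B=\frac12\cos(A+B)+\frac12\cos(A-B)$, taking $A=(\nu+\frac12)\beta_1$ and $B=(\nu+\frac12)\beta_2$. Substituting $\beta=\beta_1+\beta_2$ in the first summand and $\beta=|\beta_1-\beta_2|$ in the second, then integrating out the transverse variable, recasts $P_\nu(\cos\theta_1)P_\nu(\cos\theta_2)$ as $\frac{2}{\pi}\int_0^{\pi}\cos[(\nu+\frac12)\beta]\,H(\beta)\,\D\beta$ for a convolution-type kernel $H\in L^2(0,\pi)$, the identity holding for every $\nu$. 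The hypothesis $\theta_1+\theta_2\leq\pi$ enters decisively here: it is what keeps $\beta_1+\beta_2\leq\pi$, so that $H$ stays supported inside $(0,\pi)$ and is legitimately resolved against the cosine system there.

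With both members now written as cosine transforms of $L^2(0,\pi)$ kernels, I would match $G$ and $H$ at integer degree. Specializing \eqref{eq:Mehler_Dirichlet} and averaging over $\phi$ annihilates every spherical-harmonic order $m\neq0$, collapsing the addition theorem to $\frac{1}{2\pi}\int_0^{2\pi}P_\ell(\cos\varTheta)\,\D\phi=P_\ell(\cos\theta_1)P_\ell(\cos\theta_2)$ for all $\ell\in\mathbb Z_{\geq0}$. Consequently the two transforms agree at $\nu=\ell$, whence $\int_0^{\pi}\cos[(\ell+\frac12)\beta]\,(G-H)\,\D\beta=0$ for every $\ell\geq0$, and completeness of $\{\cos[(\ell+\frac12)\beta]\}_{\ell\geq0}$ forces $G=H$ almost everywhere. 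The transforms then coincide for all $\nu\in\mathbb C$, establishing \eqref{eq:LegendreP_nu} when $\theta_1+\theta_2\leq\pi$. The complementary range $\theta_1+\theta_2\geq\pi$ follows from the reflection $\theta_i\mapsto\pi-\theta_i$, which fixes $\cos\varTheta$ (hence the left-hand side), sends $\cos\theta_i\mapsto-\cos\theta_i$, and maps $\theta_1+\theta_2\mapsto2\pi-(\theta_1+\theta_2)\leq\pi$, so the case already proved applies and produces $P_\nu(-\cos\theta_1)P_\nu(-\cos\theta_2)$.

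The consequences \eqref{eq:LegendreP_half}--\eqref{eq:LegendreP_quarter_spec'} are then substitutions: \eqref{eq:LegendreP_half} and \eqref{eq:LegendreP_quarter} come from feeding the $P_{-1/2}$ and $P_{-1/4}$ closed forms of Lemma~\ref{lm:P_nu_spec} into \eqref{eq:LegendreP_nu} (using $P_\nu(-\cos\theta)=P_\nu(\cos(\pi-\theta))$ to turn $\sin\frac{\theta}{2}$ into $\cos\frac{\theta}{2}$ on the $\theta_1+\theta_2\geq\pi$ branch), while the daggered identities follow from the diagonal choice $\theta_1=\theta_2$ (or its $u$-parametrised analogue) together with elementary half-angle reductions such as $\sin\frac{\varTheta}{2}=\sin\theta\sin\frac{\phi}{2}$. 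I expect the main obstacle to be the faithful bookkeeping around the threshold $\theta_1+\theta_2=\pi$: this threshold is invisible at integer degree, where $P_\ell(-x)=(-1)^\ell P_\ell(x)$ merges the two branches of \eqref{eq:LegendreP_nu}, yet it carries the entire analytic content at generic complex $\nu$, and the completeness argument only returns the correct branch once $H$ has been certified to live within $(0,\pi)$ and both $G,H$ shown to be genuinely square-integrable near their singularities.
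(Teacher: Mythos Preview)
Your argument for the main formula \eqref{eq:LegendreP_nu} is correct, and it follows a genuinely different path from the paper's. The paper does not work with $\nu$-independent kernels at all: after checking the integer-degree case via \eqref{eq:Mehler_Dirichlet}, it introduces an auxiliary complex parameter $z$, forms
\[
f_{s,t;\nu}(z)=P_{\nu/z}(\cos(sz))P_{\nu/z}(\cos(tz))-\frac{1}{2\pi}\int_0^{2\pi}P_{\nu/z}(\cos(sz)\cos(tz)+\sin(sz)\sin(tz)\cos\phi)\,\D\phi,
\]
uses the Laplace integral \eqref{eq:Laplace_Pnu} to compute the Hansen--Heine limit $z\to 0$ (where it collapses to the Sonine--Gegenbauer identity $J_0(\nu s)J_0(\nu t)=\frac{1}{2\pi}\int_0^{2\pi}J_0(\nu\sqrt{s^2+t^2-2st\cos\phi})\,\D\phi$), and then argues that the zero set $\{0\}\cup\{\nu/n:n\in\mathbb Z_{\geq0}\}$ accumulates at the origin, forcing $f_{s,t;\nu}\equiv 0$ in a neighbourhood of $0$; analytic continuation to $z=1$ is justified precisely when $s+t<\pi$ keeps the argument of $P_{\nu/z}$ away from the branch point $-1$. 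Your route---write each side as $\frac{2}{\pi}\int_0^\pi\cos[(\nu+\tfrac12)\beta]\,K(\beta)\,\D\beta$ with $K\in L^2(0,\pi)$ independent of $\nu$, match at $\nu=\ell\in\mathbb Z_{\geq0}$, and invoke completeness of $\{\cos[(\ell+\tfrac12)\beta]\}_{\ell\geq0}$ (the Sturm--Liouville basis for Neumann--Dirichlet conditions on $(0,\pi)$)---is more elementary: it never leaves the Mehler--Dirichlet framework and avoids Bessel functions and the somewhat delicate analytic continuation in $z$. The paper's approach, on the other hand, is deliberately chosen to showcase the Hansen--Heine scaling technique, which it reuses in the asymptotic analysis of Proposition~\ref{prop:gen_CG}. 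Both arguments identify the geometric threshold $\theta_1+\theta_2=\pi$ in the same way (support of $H$ versus avoidance of the branch point), and both finish the complementary range by the reflection $(\theta_1,\theta_2)\mapsto(\pi-\theta_1,\pi-\theta_2)$.

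One small caution on the corollaries: your closing sentence covers \eqref{eq:LegendreP_half_spec} and \eqref{eq:LegendreP_quarter_spec} (diagonal $\theta_1=\theta_2$), but \eqref{eq:LegendreP_half_spec'} and \eqref{eq:LegendreP_quarter_spec'} come from the anti-diagonal $\theta_1=\pi-\theta_2$, and the paper has to do nontrivial extra work---two Landen transformations and the $u\leftrightarrow 1-u$ symmetry \eqref{eq:PuP1_u_symm}---to push \eqref{eq:LegendreP_quarter_spec'} from $u\in(0,\tfrac12]$ to the full range $u\in(0,1)$. Your sketch underplays that step.
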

\begin{proof}When $ \nu=n$ is a non-negative integer, one can verify Eq.~\ref{eq:LegendreP_nu} by resorting to Eq.~\ref{eq:Mehler_Dirichlet}:\begin{align*}&\frac{1}{2\pi}\int_0^{2\pi}P_n(\cos\theta_1\cos\theta_2+\sin\theta_1\sin\theta_2\cos\phi)\D\phi\notag\\={}&\frac{1}{2\pi}\int_0^{2\pi}\left[ \frac{2}{\pi}\int_0^\pi\frac{\theta_{H}(\cos\beta-\cos\theta_1\cos\theta_2-\sin\theta_1\sin\theta_2\cos\phi)\cos[(n+\frac{1}{2})\beta]\D\beta}{\sqrt{\cos\beta-\cos\theta_1\cos\theta_2-\sin\theta_1\sin\theta_2\cos\phi}} \right]\D\phi\notag\\={}&\frac{1}{2\pi}\int_0^{2\pi}\left[\frac{2}{\pi}\int_0^\pi4\pi\sum _{\ell=0}^{\infty } \sum_{m=-\ell}^\ell\frac{\overline{Y_{\ell m}(\theta_{1},0)}Y_{\ell m}(\theta_{2},\phi)\cos[(\ell+\frac{1}{2})\beta]\cos[(n+\frac{1}{2})\beta]}{2\ell+1}\D\beta\right]\D\phi=P_{n}(\cos\theta_1)P_n(\cos\theta_2),\end{align*}where spherical harmonic modes other than $ Y_{n0}$ will have vanishing contribution upon integration in the angular variables  $ \beta$ and $ \phi$. Here, due to the parity relation $ P_n(-x)=(-1)^nP_n(x)$ for non-negative integers $n$, one always has $P_{n}(\cos\theta_1)P_n(\cos\theta_2) =P_{n}(-\cos\theta_1)P_n(-\cos\theta_2)$, irrespective of the sum $ \theta_1+\theta_2$.

Fixing two positive  parameters $s $ and $t $, one may consider a neighborhood of the origin in the complex $ z$-plane such that $ \R\cos(s z)>0,\R\cos(tz)>0,\R[\cos(sz)\cos(tz)+\sin(sz)\sin(tz)\cos\phi]>0$. Then, given an arbitrary $ \nu\in\mathbb C$, we can apply the Laplace integral representation for the Legendre functions (Eq.~\ref{eq:Laplace_Pnu}) to evaluate the following scaling limit:\begin{align*}f_{s,t;\nu}(z):={}&P_{\nu/z}(\cos(sz))P_{\nu/z}(\cos(tz))-\int_0^{2\pi}P_{\nu/z}(\cos(sz)\cos(tz)+\sin(sz)\sin(tz)\cos\phi)\frac{\D\phi}{2\pi}\notag\\={}&\int_0^{2\pi}e^{\frac{\nu}{z}\log(\cos(sz)+i\sin(sz)\cos\phi_{1})}\frac{\D\phi_{1}}{2\pi}\int_0^{2\pi}e^{\frac{\nu}{z}\log(\cos(tz)+i\sin(tz)\cos\phi_{2})}\frac{\D\phi_{2}}{2\pi}\notag\\&-\int_0^{2\pi}\left\{\int_0^{2\pi}e^{\frac{\nu}{z}\log(\cos(sz)\cos(tz)+\sin(sz)\sin(tz)\cos\phi+i\sqrt{1-[\cos(sz)\cos(tz)+\sin(sz)\sin(tz)\cos\phi]^{2}}\cos\phi_3)}\frac{\D\phi_3}{2\pi}\right\}\frac{\D\phi}{2\pi}\notag\\\to{}&\int_0^{2\pi}e^{i\nu s\cos\phi_{1}}\frac{\D\phi_{1}}{2\pi}\int_0^{2\pi}e^{i\nu t\cos\phi_{2}}\frac{\D\phi_{2}}{2\pi}-\int_0^{2\pi}\left\{\int_0^{2\pi}e^{i\nu\sqrt{\smash[b]{s^{2}+t^2-2st\cos\phi}}\cos\phi_3}\frac{\D\phi_3}{2\pi}\right\}\frac{\D\phi}{2\pi}=0,\quad \text{as }z\to0,\end{align*}where the last equality turns out to be a special case of the Sonine-Gegenbauer formula for Bessel functions  \cite[Ref.][\S12.1]{Watson1995Bessel}:
\begin{align*}J_{0}(\nu s)J_0(\nu t)=\frac{1}{2\pi}\int_0^{2\pi}J_0(\nu\sqrt{\smash[b]{s^{\raisebox{0em}{$_2$}}+t^{\raisebox{0em}{$_2$}}-2st\cos\phi}})\D\phi.\end{align*}

 Thus, with fixed parameters $ s$, $t$ and $ \nu$, the function $ f_{s,t;\nu}(z)$ is analytic near the origin in the complex $ z$-plane, and $ f_{s,t;\nu}(z)=0$ on the set  $ \{0\}\cup\{\nu/n|n\in\mathbb Z\cap[0,+\infty)\}$, which contains an accumulation point $ \{0\}$ in the domain of analyticity for $ f_{s,t;\nu}(z)$. Therefore, the function $ f_{s,t;\nu}(z)$ must vanish identically in a certain neighborhood of  the origin, the spatial extent of which depends on the nature of the positive parameters $ s$ and $t$.

Now suppose that $ s+t<\pi$, then $ \cos s\cos t+\sin s\sin t\cos\phi\geq\cos(s+t)>-1$. Accordingly,  for a complex number $ z$ sitting in a certain open neighborhood of the unit interval $ [0,1]$, the expression $ \cos(sz)\cos(tz)+\sin(sz)\sin(tz)\cos\phi$ will miss the value $ -1$, which is the logarithmic branch point of the Legendre functions $ P_\mu$ of non-integer degree $ \mu\notin\mathbb Z$. Hence, when $s>0$, $t>0$ and $s+t<\pi$, we have a univalent complex-analytic function $ f_{s,t;\nu}(z)$ in an open  neighborhood of the unit interval $ [0,1]$, where resides a non-isolated set of zeros. By the principle of analytic continuation, we may conclude that $ f_{s,t;\nu}(1)=0$ for $ s>0$, $ t>0$ and $ s+t<\pi$, which extends, by continuity, to the $ s+t=\pi$ scenario as well as those situations where  $st=0$. This proves  Eq.~\ref{eq:LegendreP_nu} for the cases of $ \theta_1+\theta_2\leq\pi$. The rest of  Eq.~\ref{eq:LegendreP_nu} follows from the invariance of the integral\begin{align*}\frac{1}{2\pi}\int_0^{2\pi}P_\nu(\cos\theta_1\cos\theta_2+\sin\theta_1\sin\theta_2\cos\phi)\D\phi\end{align*}under the equatorial reflections $ (\theta_1,\theta_2)\mapsto(\pi-\theta_1,\pi-\theta_2)$.

Setting $ \nu=-1/2$ in  Eq.~\ref{eq:LegendreP_nu}, we obtain  Eq.~\ref{eq:LegendreP_half}; setting $ \nu=-1/4$ or $ \nu=-3/4$ in  Eq.~\ref{eq:LegendreP_nu}, one verifies   Eq.~\ref{eq:LegendreP_quarter}. When $ \theta_1=\theta_2$, Eq.~\ref{eq:LegendreP_half}  (resp.~\ref{eq:LegendreP_quarter}) becomes Eq.~\ref{eq:LegendreP_half_spec} (resp.~\ref{eq:LegendreP_quarter_spec}).
When $ \theta_1=\pi-\theta_2=\theta$, one may specialize Eq.~\ref{eq:LegendreP_half} into  Eq.~\ref{eq:LegendreP_half_spec'}. A similar procedure on    Eq.~\ref{eq:LegendreP_quarter} would bring us the evaluation\begin{align*}&\int_0^{\pi/2}\mathbf K\left( \sqrt{\vphantom{\frac{\sqrt2}{\sqrt2}}\smash{\frac{2\sqrt{\smash[b]{1-4u(1-u)\cos^{\raisebox{0em}{$_2$}}\!\phi}}}{1+\sqrt{\smash[b]{1-4u(1-u)\cos^{\raisebox{0em}{$_2$}}\!\phi}}}}} \right)\frac{\D\phi}{\sqrt{\vphantom{{\vec W}}\smash{1+\sqrt{\smash[b]{1-4u(1-u)\cos^{\raisebox{0em}{$_2$}}\!\phi}}}}}\notag\\={}&\frac{1}{\sqrt{1+\sqrt{\vphantom{1}u}}\sqrt{1+\sqrt{1-u}}}\mathbf K\left( \sqrt{\frac{2\sqrt{\vphantom{1}u}}{1+\sqrt{\vphantom{1}u}}} \right)\mathbf K\left( \sqrt{\frac{2\sqrt{1-u}}{1+\sqrt{1-u}}} \right),\quad \forall u\in\left( 0,\frac12 \right],\end{align*}   while the substitution $ u=(1-t)^2/(1+t)^2$ and Landen's transformation \cite[Ref.][item 8.126]{GradshteynRyzhik} allow us to see that\begin{align*}\frac{1}{\sqrt{1+\sqrt{1-u}}}\mathbf K\left( \sqrt{\frac{2\sqrt{1-u}}{1+\sqrt{1-u}}} \right)=\frac{\sqrt{1+t}}{1+\sqrt{t}}\mathbf K\left( \frac{2\sqrt[4] t}{1+\sqrt{t}} \right)=\sqrt{1+t}\mathbf K(\sqrt{t})=\sqrt{\frac{2}{1+\sqrt{\vphantom{1}u}}}\mathbf K\left( \sqrt{\frac{1-\sqrt{\vphantom{1}u}}{1+\sqrt{\vphantom{1}u}}} \right).\end{align*}This verifies Eq.~\ref{eq:LegendreP_quarter_spec'} for $u\in(0,1/2]$. Furthermore, the right-hand side of  Eq.~\ref{eq:LegendreP_quarter_spec'}  is intact under the transformation $ u\mapsto1-u$, whereas the symmetric extension of its left-hand side hinges on the identity\begin{align}
{\frac{1}{1+\sqrt{\vphantom{1}u}}}\mathbf K\left( \sqrt{\frac{1-\sqrt{\vphantom{1}u}}{1+\sqrt{\vphantom{1}u}}} \right)\mathbf K\left( \sqrt{\frac{2\sqrt{\vphantom{1}u}}{1+\sqrt{\vphantom{1}u}}} \right)={\frac{1}{1+\sqrt{1-u}}}\mathbf K\left( \sqrt{\frac{1-\sqrt{1-u}}{1+\sqrt{1-u}}} \right)\mathbf K\left( \sqrt{\frac{2\sqrt{1-u}}{1+\sqrt{1-u}}} \right),\quad 0<u<1,\label{eq:PuP1_u_symm}
\end{align}which is an equivalent formulation for the products of two Landen's transformations:
\begin{align}\frac{1+t}{2}\mathbf K(\sqrt{1-t})\mathbf K(\sqrt{t})=\frac{1+t}{(1+\sqrt{t})^{2}}\mathbf K\left( \frac{1-\sqrt{t}}{1+\sqrt{t}} \right)\mathbf K\left( \frac{2\sqrt[4]{t}}{1+\sqrt{t}} \right),\quad 0<t<1,\tag{\ref{eq:PuP1_u_symm}$'$}\end{align}
with the  correspondence of variables being $u=(1-t)^2/(1+t)^2 $.
\qed\end{proof}\begin{remark}\begin{enumerate}[label=(\arabic*)]\item Using Eq.~\ref{eq:Mehler_Dirichlet}, we may readily adapt our proof of the product formula in Eq.~\ref{eq:LegendreP_nu}  to associated Legendre functions:\begin{align}
\frac{1}{2\pi}\frac{\Gamma(\nu+m+1)}{\Gamma(\nu- m+1)}\int_0^{2\pi}P_\nu(\cos\theta_1\cos\theta_2+\sin\theta_1\sin\theta_2\cos\phi)\cos m\phi\D\phi=\begin{cases}P_{\nu}^{m}(\cos\theta_1)P_\nu^{m}(\cos\theta_2), & \theta_{1}+\theta_2\leq\pi\ \\
P_{\nu}^{m}(-\cos\theta_1)P_\nu^{m}(-\cos\theta_2), & \theta_{1}+\theta_2\geq\pi
\end{cases}\tag{\ref{eq:LegendreP_nu}$ ^m$}\label{eq:LegendreP_nu_m}
\end{align}  where $ \theta_1,\theta_2\in[0,\pi)$, $ \nu\in\mathbb C$, $ m\in\mathbb Z$. Here, the  corresponding scaling limit is the Sonine-Gegenbauer formula\[J_{m}(\nu s)J_m(\nu t)=\frac{1}{2\pi}\int_0^{2\pi}J_0(\nu\sqrt{\smash[b]{s^{\raisebox{0em}{$_2$}}+t^{\raisebox{0em}{$_2$}}-2st\cos\phi}})\cos m\phi\D\phi.\] (Doubtlessly, the cases of\begin{align*}P^{1}_{-1/2}(\cos\theta)={}&\frac{2}{\pi\sin\theta}\left[ \mathbf E\left( \sin\frac{\theta}{2}\right) -\mathbf K\left( \sin\frac{\theta}{2}\right)\cos^2\frac{\theta}{2}\right],\notag\\P^{1}_{-1/4}(\cos\theta)=P^{1}_{-3/4}(\cos\theta)={}&\frac{\sqrt{\smash[b]{1+\sin(\theta/2)}}}{\pi\sin\theta}\left[ \mathbf E\left( \sqrt{\frac{2\sin(\theta/2)}{1+\sin(\theta/2)}}\right) -\mathbf K\left( \sqrt{\frac{2\sin(\theta/2)}{1+\sin(\theta/2)}}\right)\left( 1-\sin\frac{\theta}{2} \right)\right]\end{align*}  will then lead to  generalizations of Eqs.~\ref{eq:LegendreP_half} and \ref{eq:LegendreP_quarter} into integral formulae involving complete elliptic integrals of the second kind.) Effectively, we can combine Eq.~\ref{eq:LegendreP_nu_m}  with Eq.~\ref{eq:LegendreP_nu} to develop a Fourier  expansion in $ \phi$ for the function $ P_\nu(\cos\theta_1\cos\theta_2+\sin\theta_1\sin\theta_2\cos\phi)$ (necessarily under the constraints $ 0\leq\theta_1\leq\pi,0\leq\theta_2\leq\pi,0\leq\theta_1+\theta_2\leq\pi$),  which turns up as a uniformly convergent series formerly known to E. W. Hobson  \cite[Ref.][\S226]{Hobson1931}: \begin{align*}P_\nu(\cos\theta_1\cos\theta_2+\sin\theta_1\sin\theta_2\cos\phi)=P_{\nu}(\cos\theta_1)P_\nu(\cos\theta_2)+2\sum_{m=1}^\infty\frac{\Gamma(\nu-m+1)}{\Gamma(\nu+m+1)}P_{\nu}^m(\cos\theta_1)P_\nu^m(\cos\theta_2)\cos m\phi.\end{align*}Hence we refer to Eq.~\ref{eq:LegendreP_nu}  as the Hobson coupling formula. Hobson's original approach draws on the Fourier series of $ (z+\sqrt{z^2-1}\cos\phi)^\nu$ for complex-valued $\nu$, which  is  not a method based on scaling limits.\item The representation of Bessel functions $ J_m$ as the scaling limit of (associate) Legendre functions $ P^m_\nu$  is a special case of Hansen's formula \cite[Ref.][\S5.7]{Watson1995Bessel}. Heine extended the limit procedure so that one may connect Bessel functions $ Y_m$ and the (associate) Legendre functions $ Q^m_\nu$  in a similar fashion \cite[Ref.][\S5.71]{Watson1995Bessel}. The Hansen-Heine scaling limit procedure ($ m=0$) will be used again during the proof of Eq.~\ref{eq:T_nu_nu_CG}, namely, \[\int_{-1}^1 {[}P_{\nu}(x){]}^2P_{\nu}(-x)\D x=\lim_{z\to\nu}\frac{1+2\cos(\pi z)}{3}\frac{\pi\Gamma\left(\frac{z+1 }{2}\right) \Gamma\left(\frac{3 z+2 }{2}\right)}{\left[ \Gamma\left(\frac{1-z}{2}\right)\right]^2\left[ \Gamma\left(\frac{z+2 }{2}\right)\right]^3 \Gamma\left(\frac{3 z+3 }{2}\right)},\quad\nu\in\mathbb  C\] in Proposition~\ref{prop:gen_CG}.        \item
The  integral formula in Eq.~\ref{eq:LegendreP_half_spec'} is usually attributed to Glasser \cite{Glasser1976}, and has been mentioned recently in the work of Bailey \textit{et al.} \cite{Bailey2008}. We will recover Eq.~\ref{eq:LegendreP_half_spec'} later in Proposition~\ref{prop:Ramanujan} (see \S\ref{subsec:Ramanujan}), using a method independent of the Hobson coupling formula for Legendre functions.\item The Hobson coupling formula is sensitive to the geometric constraint $ 0\leq\theta_1+\theta_2\leq\pi$. Thus, it is not  possible to directly  extrapolate the formulae in this lemma to an integral representation for  $ [P_\nu(x)]^2,-1<x<0$ where $ \nu$ is arbitrary. Practically, this obstacle can be overcome in various ways. For $ \nu=-1/2,-1/3,-1/4,-1/6$, the angular restriction in the Hobson coupling formula  can be circumvented by using Ramanujan's  integral representation for $ [\mathbf K(k)]^2,0\leq k<1$ (see Proposition~\ref{prop:Ramanujan}). For generic $ \nu$, Hobson's approach yields an integral representation for $ P_{\nu}(x)P_\nu(-x),-1<x<1$ and $ [P_\nu(x)]^2,0\leq x<1$, and the finite Hilbert transform in  Proposition~\ref{prop:P_nu_T} (see \S\ref{subsec:Tricomi_comp}) will enable us to express $ [P_\nu(x)]^2-[P_\nu(-x)]^2,0\leq x<1$ in terms of $ P_{\nu}(\xi)P_\nu(-\xi),-1<\xi<1$.   \eor \end{enumerate}
\end{remark}

\section{\label{sec:G_CG}Generalized Clebsch-Gordan Integrals}As the $ C^2(-1,1)$ solutions to the  Legendre differential equation
of arbitrary degree  $ \nu\in\mathbb C\smallsetminus\mathbb Z_{<0}$:\begin{align}
\frac{\D}{\D x}\left[ (1-x^2 )\frac{\D f(x)}{\D x}\right]+\nu(\nu+1) f(x)=0\label{eq:LegendreDiff_nu}
\end{align}are exhausted by linear combinations of $ P_\nu(x)$ and $ Q_\nu(x)$, a special case in  Appell's theory of third order ordinary differential equations \cite{Appell1881} then tells us that the $ C^3(-1,1)$ solutions to \begin{align}
\frac{\D}{\D x}\left\{(1-x^2 )\frac{\D}{\D x}\left[ (1-x^2 )\frac{\D f(x)}{\D x}\right]+4\nu(\nu+1)(1-x^2 ) f(x)\right\}+4\nu(\nu+1)
xf(x)=0,\quad \nu\in\mathbb C\smallsetminus\mathbb Z_{<0}\label{eq:LegendreSqrDiff_nu}\end{align} belong to a three-dimensional space spanned by $ [P_\nu(x)]^2$, $ P_\nu(x)Q_\nu(x)$ and $ [Q_\nu(x)]^2$ (or by an equivalent basis set $ \{[P_\nu(x)]^2,P_\nu(x)P_\nu(-x),[P_\nu(-x)]^2\}$ when $ \nu\notin\mathbb Z$).

The  ``interpolation'' from Legendre polynomials $ P_\ell(x),\ell\in\mathbb Z_{\geq0}$ to non-integer degree Legendre functions  $ P_{\nu}(x),\nu\notin\mathbb Z$ and the ``scaling limit'' for large degrees will  enable us to  extend some Clebsch-Gordan integral formulae to generic Legendre functions, as elaborated in the proposition below.

\begin{proposition}[Generalized Clebsch-Gordan Integrals]\label{prop:gen_CG}
 Define \begin{align*}T_{\mu,\nu}:=\int_{-1}^1 P_\mu(x)P_\nu(x)P_\nu(-x)\D x,\quad \mu,\nu\in\mathbb C ,\end{align*}then we have the symmetry \begin{align}T_{\mu,\nu}=T_{-\mu-1,\nu}=T_{\mu,-\nu-1}=T_{-\mu-1,-\nu-1}, \label{eq:T_mu_nu_symm} \end{align}the recursion relation  \begin{align}
(\mu +1)^2  [(\mu+1)^{2} -(2 \nu+1 )^{2}]T_{\mu+1,\nu}-\mu ^2 [\mu^{2} -(2 \nu+1 )^{2}]T_{\mu-1,\nu}={}&\frac{4 (2 \mu +1) \sin (  \mu\pi ) \sin (  \nu\pi )}{\pi ^2},\label{eq:T_mu_nu_rec}
\end{align}the representation in terms of generalized hypergeometric series\begin{align}T_{\mu,\nu}={}&\frac{2}{\pi^2}\frac{\sin(\mu\pi)\sin(\nu\pi)}{\mu(\mu+1)}\left[ \, \frac{1}{\nu}\,{_4F_3}\left(\left.\begin{array}{c}
1,\frac{1-\mu}{2},\frac{\mu+2 }{2},-\nu \\[4pt]
\frac{2-\mu }{2},\frac{\mu+3 }{2},1-\nu \\
\end{array}\right| 1\right) -\frac{1}{\nu+1} \,{ _4F_3}\left(\left.\begin{array}{c}
1,\frac{1-\mu}{2},\frac{\mu+2 }{2},\nu+1\ \\[4pt]
\frac{2-\mu }{2},\frac{\mu+3 }{2},\nu+2 \\
\end{array}\right| 1\right) \right],\quad \mu,\nu\notin\mathbb  Z\label{eq:T_mu_nu_CG}\\T_{\mu,\nu}={}&\frac{2}{\pi}\frac{\sin(\mu\pi)\cos(\nu\pi)}{2\nu+1}\left[ \, \frac{1}{\mu}\,{_5F_4}\left(\left.\begin{array}{c}
\frac{1}{2},\frac{1}{2},-\frac{\mu }{2},-\nu ,1+\nu \\[4pt]
1,\frac{2-\mu }{2},\frac{1-2\nu}{2} ,\frac{2\nu+3}{2} \\
\end{array}\right| 1\right) -\frac{1}{\mu+1}\, {_5F_4}\left(\left.\begin{array}{c}
\frac{1}{2},\frac{1}{2},\frac{1+\mu}{2},-\nu ,1+\nu\ \\[4pt]
1,\frac{3+\mu}{2},\frac{1-2\nu}{2} ,\frac{3+2\nu}{2}
\end{array}\right| 1\right) \right],\quad \mu,\nu\notin\mathbb Z,\nu\neq-\frac{1}{2}\tag{\ref{eq:T_mu_nu_CG}$ ^*$}\label{eq:T_mu_nu_CG_star} \end{align}along with some generalized Clebsch-Gordan integral formulae:{\allowdisplaybreaks\begin{align}T_{\mu,n}={}&(-1)^{n}\frac{\pi   \Gamma\left(\frac{2n+1-\mu }{2}\right)\Gamma\left(\frac{2n+2+\mu }{2}\right)}{ \left[\Gamma\left(\frac{1-\mu}{2}\right)\right]^2 \left[\Gamma \left(\frac{\mu+2 }{2}\right)\right]^2 \Gamma\left(\frac{2n+2-\mu }{2}\right)\Gamma\left(\frac{2n+3+\mu }{2}\right)},\quad  \mu\notin\mathbb Z, n\in\mathbb Z_{\geq0}\label{eq:T_mu_n_CG}\tag{\ref{eq:T_mu_nu_CG}$_{(\mu,n)}$}\\T_{2m,\nu}={}&\frac{\pi\cos(\nu\pi)   \Gamma\left(\frac{2\nu+1-2m }{2}\right)\Gamma(\nu+1+m)}{ \left[\Gamma\left(\frac{1-2m}{2}\right)\right]^2 \left[\Gamma(m{+1})\right]^2 \Gamma(\nu+1-m)\Gamma\left(\frac{2\nu+3+2m }{2}\right)},\quad m\in\mathbb Z_{\geq0},\nu+\frac{1}{2}\notin \mathbb Z\tag{\ref{eq:T_mu_nu_CG}$ _{(2m,\nu)}$}\label{eq:T_2m_nu_CG}\\T_{2m,n+\frac{1}{2}}={}&\begin{cases}-\dfrac{(-1)^{n}}{\pi}\dfrac{   [\Gamma(m+\frac{1}{2})]^2\Gamma(m-n-\frac{1}{2})\Gamma(m+n+\frac{3}{2})}{  \left[\Gamma(m{+1})\right]^2 \Gamma(m-n)\Gamma(m+n+2)}, & m\in\mathbb Z_{\geq0},n\in \mathbb Z,m-n\notin\mathbb Z_{\leq 0},m+n+2\notin \mathbb Z_{\leq 0} \\
0, & m\in\mathbb Z_{\geq0}, (m-n\in\mathbb Z_{\leq 0}\text{ or } m+n+2\in \mathbb Z_{\leq 0})\\
\end{cases}\quad \tag{\ref{eq:T_mu_nu_CG}$_{(2m,n+\frac{1}{2})}$}\label{eq:T_2m_n_half_CG}\\T_{\nu,\nu}={}&\lim_{z\to\nu}\frac{1+2\cos(\pi z)}{3}\frac{\pi\Gamma\left(\frac{z+1 }{2}\right) \Gamma\left(\frac{3 z+2 }{2}\right)}{\left[ \Gamma\left(\frac{1-z}{2}\right)\right]^2\left[ \Gamma\left(\frac{z+2 }{2}\right)\right]^3 \Gamma\left(\frac{3 z+3 }{2}\right)},\quad\nu\in\mathbb  C\label{eq:T_nu_nu_CG}\tag{\ref{eq:T_mu_nu_CG}$ _{(\nu,\nu)}$}\\T_{2\nu-1,\nu}={}&\lim_{z\to\nu}\frac{\sin(\pi z)\sin(2\pi z)}{\pi^{2}z^2},\quad\nu\in\mathbb  C\label{eq:T_2nu1_nu_CG}\tag{\ref{eq:T_mu_nu_CG}$ _{(2\nu-1,\nu)}$}\\T_{2\nu+2,\nu}={}&-\lim_{z\to\nu}\frac{\sin(\pi z)\sin(2\pi z)}{\pi^{2}(z+1)^2},\quad\nu\in\mathbb  C.\label{eq:T_2nu2_nu_CG}\tag{\ref{eq:T_mu_nu_CG}$ _{(2\nu+2,\nu)}$}\end{align}}
\end{proposition}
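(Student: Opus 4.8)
Throughout I would set $g(x):=P_\nu(x)P_\nu(-x)$ and note at once that $g$ is an even function, so that only the even part of $P_\mu$ contributes to $T_{\mu,\nu}$. The symmetry \ref{eq:T_mu_nu_symm} is then immediate: the relation $P_\lambda\equiv P_{-\lambda-1}$ leaves the factor $P_\mu$ invariant under $\mu\mapsto-\mu-1$ and leaves $g$ invariant under $\nu\mapsto-\nu-1$ (since $P_{-\nu-1}(x)P_{-\nu-1}(-x)=g(x)$), whence all four equalities hold simultaneously. To reach the recursion \ref{eq:T_mu_nu_rec}, the plan is to exploit that $g$ solves the third-order equation \ref{eq:LegendreSqrDiff_nu} while $P_\mu$ solves the second-order Legendre equation \ref{eq:LegendreDiff_nu}. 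I would start from $\int_{-1}^1 P_\mu\,\mathcal M_\nu g\,\D x=0$, with $\mathcal M_\nu$ the operator in \ref{eq:LegendreSqrDiff_nu}, and integrate by parts, transferring every derivative onto $P_\mu$ and repeatedly replacing $\frac{\D}{\D x}[(1-x^2)P_\mu']$ by $-\mu(\mu+1)P_\mu$. Every term then collapses into a multiple of $T_{\mu,\nu}$ except the one carrying the explicit factor $x$, namely $4\nu(\nu+1)\int_{-1}^1 xP_\mu g\,\D x$; there I would substitute the three-term recurrence $(2\mu+1)xP_\mu=(\mu+1)P_{\mu+1}+\mu P_{\mu-1}$ to manufacture the neighbours $T_{\mu+1,\nu}$ and $T_{\mu-1,\nu}$. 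The quartic coefficients $(\mu+1)^2[(\mu+1)^2-(2\nu+1)^2]$ should then emerge from bookkeeping the accumulated factors $\mu(\mu+1)$, $\nu(\nu+1)$ and the denominator $2\mu+1$.

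The delicate part, which I expect to be the main obstacle, is the boundary contribution generated by this integration by parts. Because $g(x)=P_\nu(x)P_\nu(-x)$ inherits the logarithmic singularity $P_\nu(\mp x)\sim-\frac{\sin(\nu\pi)}{\pi}\log\frac{1\mp x}{2}$ at $x=\pm1$, and because $P_\mu$ is itself logarithmically singular at $x=-1$ for $\mu\notin\mathbb Z$, several boundary evaluations at $x=-1$ are products of two logarithmically divergent factors. I would expand $P_\mu$ and $g$ to the needed order near $x=\pm1$, verify that the genuinely divergent $\log$-terms cancel in the Wronskian-type pairings $(1-x^2)(P_\mu g'-gP_\mu')$ and their higher analogues, and extract the finite residue. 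It is exactly this residue, fed by the product of the two singularity amplitudes, that produces the right-hand side $\frac{4(2\mu+1)\sin(\mu\pi)\sin(\nu\pi)}{\pi^2}$; the $\sin(\nu\pi)$-only contributions from $x=+1$ (where $P_\mu$ is regular) should cancel among themselves once the three-term recurrence is applied. Pinning this constant exactly, rather than merely up to a factor, is the fiddliest step.

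With \ref{eq:T_mu_nu_rec} established, I would produce the closed forms in two stages. First, to justify the ${}_4F_3$ and ${}_5F_4$ representations \ref{eq:T_mu_nu_CG} and \ref{eq:T_mu_nu_CG_star}, I would either expand $P_\mu(x)={}_2F_1(-\mu,\mu+1;1;\frac{1-x}{2})$ and integrate the series against $g$ term by term, or, more economically, check that the stated hypergeometric expressions satisfy the two-step recursion \ref{eq:T_mu_nu_rec} through the standard contiguous relations, fixing the two free constants in each parity class by a seed such as $T_{0,\nu}=\int_{-1}^1 P_\nu(x)P_\nu(-x)\,\D x$ computed directly from the Laplace representation \ref{eq:Laplace_Pnu}. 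Second, I would read off each explicit evaluation by forcing a series to terminate or to become Saalsch\"utzian/well-poised and then invoking the matching classical summation theorem: setting $\nu=n\in\mathbb Z_{\geq0}$, where also $P_n(-x)=(-1)^nP_n(x)$, terminates the series and gives \ref{eq:T_mu_n_CG}; setting $\mu=2m$ gives \ref{eq:T_2m_nu_CG}; and imposing both conditions collapses the double-terminating series to the piecewise gamma-quotient \ref{eq:T_2m_n_half_CG}, including the vanishing cases dictated by the poles of the gamma factors.

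Finally, for the diagonal value \ref{eq:T_nu_nu_CG}, the identity that powers the rest of the paper, I would not attempt a direct diagonal summation, since both ${}_4F_3$'s in \ref{eq:T_mu_nu_CG} degenerate at $\mu=\nu$, but instead invoke the Hansen-Heine scaling limit used for the Hobson coupling formula \ref{eq:LegendreP_nu}. Rescaling the degree and argument simultaneously reduces $\int_{-1}^1[P_\nu(x)]^2P_\nu(-x)\,\D x$ in the large-degree regime to a triple-Bessel integral of Sonine-Weber type with a known closed value; combined with the recursion this both fixes the particular solution and yields the gamma-quotient, while the prefactor $\frac{1+2\cos(\pi z)}{3}$ together with $\lim_{z\to\nu}$ absorbs the removable indeterminacies that arise at integer degrees. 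The remaining two evaluations \ref{eq:T_2nu1_nu_CG} and \ref{eq:T_2nu2_nu_CG} then follow by specializing $\mu=2\nu-1$ and $\mu=2\nu+2$ in the general formula and simplifying the resulting gamma factors through the reflection and duplication identities.
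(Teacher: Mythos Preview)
Your treatment of the symmetry and of the recursion \eqref{eq:T_mu_nu_rec} is essentially what the paper does. One small remark: the paper organizes the integrations by parts so that every boundary term involves a factor $(1-x^2)P_\lambda'(x)$, which has the \emph{finite} limit $2\sin(\lambda\pi)/\pi$ at $x=-1$ and vanishes at $x=+1$; no $\log\times\log$ indeterminacy ever arises. So your ``fiddliest step'' is avoidable by bookkeeping.

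For the hypergeometric representations and the specializations $T_{\mu,n}$, $T_{2m,\nu}$ the paper takes a different route from yours: instead of checking contiguous relations or integrating a ${}_2F_1$ series termwise, it uses Dougall's expansion $P_\lambda(x)=\sum_{\ell\ge0}P_\ell(x)\bigl[\tfrac{\sin(\ell-\lambda)\pi}{(\ell-\lambda)\pi}+\tfrac{\sin(\ell+\lambda+1)\pi}{(\ell+\lambda+1)\pi}\bigr]$ (first on $P_\mu$, then via Hobson coupling on $P_\nu(x)P_\nu(-x)$) to reduce everything to the classical Clebsch--Gordan values $T_{m,n}$, and then sums the resulting series into ${}_4F_3$ and ${}_5F_4$. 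Your plan could work too, but the Dougall interpolation is what makes the paper's argument short.

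There is a genuine gap in your plan for the diagonal value $T_{\nu,\nu}$. The Hansen--Heine limit gives only the leading asymptotic of $T_{z,z}$ for large $|z|$; it does not by itself determine the exact value at finite $\nu$, and there is \emph{no} recursion in $\nu$ for $T_{\nu,\nu}$ to feed it into (the relation \eqref{eq:T_mu_nu_rec} links $T_{\mu\pm1,\nu}$ at fixed $\nu$, not diagonal values). What the paper actually does is a Carlson/Liouville-type argument: set $\mathfrak T(z)=\sin^{-2}(\pi z)\bigl[T_{z,z}-\text{(claimed gamma quotient)}\bigr]$, first prove $\mathfrak T$ is \emph{entire} by showing the bracket has double zeros at every integer (this uses the already-established formula for $T_{\mu,n}$ to compute $\partial_z T_{z,z}|_{z=n}$), then use uniform Bessel asymptotics on square contours $C_N$ to obtain $\mathfrak T(z)=O(N^{-9/4})$ along $C_N$, and conclude $\mathfrak T\equiv0$ by Cauchy's integral formula. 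Your proposal omits both the entirety check and the vanishing step.

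Finally, your derivation of \eqref{eq:T_2nu1_nu_CG} and \eqref{eq:T_2nu2_nu_CG} is off: there is no ``general formula'' with gamma factors to specialize. These two identities drop out of the recursion \eqref{eq:T_mu_nu_rec} itself, by choosing $\mu=2\nu$ (so the coefficient of $T_{\mu+1,\nu}$ vanishes and $T_{2\nu-1,\nu}$ is read off directly) and $\mu=2\nu+1$ (so the coefficient of $T_{\mu-1,\nu}$ vanishes), respectively.
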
\begin{proof}The symmetry of Legendre functions with respect to degree $ P_{\nu}(x)=P_{-\nu-1}(x),\forall\nu\in\mathbb C$ explains the related property of $ T_{\mu,\nu}$  (Eq.~\ref{eq:T_mu_nu_symm}).

 Judging from the familiar recursion relations for Legendre functions\begin{align*}&&(2\mu+1)(1-x^{2})\frac{\D P_\mu(x)}{\D x}={}&\mu(\mu+1)[P_{\mu-1}(x)-P_{\mu+1}(x)];&(2\mu+1) x P_\mu(x)={}&(\mu+1)P_{\mu+1}(x)+\mu P_{\mu-1}(x)&&\end{align*}and the  differential equations given in Eqs.~\ref{eq:LegendreDiff_nu} and \ref{eq:LegendreSqrDiff_nu}, we may carry out the following computations: {\allowdisplaybreaks\begin{align*}&\nu(\nu+1)[(\mu+1)T_{\mu+1,\nu}+\mu T_{\mu-1,\nu}]\notag\\={}&\nu(\nu+1)\int_{-1}^{1}[(\mu+1)P_{\mu+1}(x)+\mu P_{\mu-1}(x)]P_{\nu}(x) P_\nu(-x)\D x=(2\mu+1)\nu(\nu+1)\int_{-1}^1x P_\mu(x)P_{\nu}(x) P_\nu(-x)\D x\notag\\={}&-\frac{2\mu+1}{4}\int_{-1}^1P_\mu(x)\frac{\D}{\D x}\left\{(1-x^2 )\frac{\D}{\D x}\left[ (1-x^2 )\frac{\D(P_{\nu}(x) P_\nu(-x)) }{\D x}\right]+4\nu(\nu+1)(1-x^2 )P_{\nu}(x) P_\nu(-x)\right\}\D x\notag\\={}&\frac{2\mu+1}{4}\int_{-1}^1\left\{(1-x^2 )\frac{\D}{\D x}\left[ (1-x^2 )\frac{\D(P_{\nu}(x) P_\nu(-x)) }{\D x}\right]+4\nu(\nu+1)(1-x^2 )P_{\nu}(x) P_\nu(-x)\right\}\frac{\D P_\mu(x)}{\D x}\D x\notag\\={}&\frac{2\mu+1}{4}\int_{-1}^1\left\{(1-x^2 )\frac{\D}{\D x}\left[ (1-x^2 )\frac{\D(P_{\nu}(x) P_\nu(-x)) }{\D x}\right]\right\}\frac{\D P_\mu(x)}{\D x}\D x+\mu(\mu+1)\nu(\nu+1)(T_{\mu-1,\nu}- T_{\mu+1,\nu})\notag\\={}&\frac{2\mu+1}{4}\left[\mu(\mu+1)\int_{-1}^1(1-x^2 )P_\mu(x)\frac{\D(P_{\nu}(x) P_\nu(-x)) }{\D x}{\D x}-\frac{4\sin(\mu\pi)\sin(\nu\pi)}{\pi^{2}}\right]+\mu(\mu+1)\nu(\nu+1)(T_{\mu-1,\nu}- T_{\mu+1,\nu}),\end{align*}}where we have used in the last line the properties $ P_\nu(1)=1,\forall\nu\in\mathbb C$ and \begin{align*}\lim_{x\to-1+0^{+}}(1-x^2)\frac{\D P_\nu(x)}{\D x}=\frac{2\sin(\nu\pi)}{\pi},\quad \forall\nu\in\mathbb C.\end{align*}We may then proceed with the simplification\begin{align*}&(2\mu+1)\int_{-1}^1(1-x^2 )P_\mu(x)\frac{\D(P_{\nu}(x) P_\nu(-x)) }{\D x}{\D x}\notag\\={}&-(2\mu+1)\int_{-1}^1(1-x^2 )P_{\nu}(x) P_\nu(-x)\frac{\D P_\mu(x)}{\D x}\D x+2(2\mu+1)\int_{-1}^1xP_\mu(x)P_{\nu}(x) P_\nu(-x)\D x\notag\\={}&\mu(\mu+1)(T_{\mu+1,\nu}-T_{\mu-1,\nu})+2(\mu+1)T_{\mu+1,\nu}+2\mu T_{\mu-1,\nu},\end{align*}so as to confirm the recursion formula of $ T_{\mu,\nu}$ (Eq.~\ref{eq:T_mu_nu_rec}).

For non-negative integers $ m$ and $n$ meeting the requirement $ m\leq 2n$, we might recall from  the standard Clebsch-Gordan theory  that \begin{align}\label{eq:T_m_n_CG}\tag{\ref{eq:T_mu_nu_CG}$_{(m,n)}$}T_{m,n}:={}&\int_{-1}^1P_m(x)P_n(x)P_n(-x)\D x=2(-1)^{n}\begin{pmatrix}m & n & n \\
0 & 0 & 0 \\
\end{pmatrix}^2=(-1)^{n}\frac{\pi   \Gamma\left(\frac{2n+1-m }{2}\right)\Gamma\left(\frac{2n+2+m }{2}\right)}{ \left[\Gamma\left(\frac{1-m}{2}\right)\right]^2 \left[\Gamma\left(\frac{m+2 }{2}\right)\right]^2 \Gamma\left(\frac{2n+2-m }{2}\right)\Gamma\left(\frac{2n+3+m }{2}\right)},\end{align}where $\left( \begin{smallmatrix}j_1&j_2& j_3\\m_1& m_2& m_3\end{smallmatrix}\right)$ is the Wigner $ 3$-$j$ symbol. The right-hand side expression in  Eq.~\ref{eq:T_m_n_CG} is regarded as zero if $m$ is a positive odd integer, as anticipated from the  relations  $ P_m(x)=-P_{m}(-x)$ and $\Gamma(\frac{1-m}{2})=\infty $ in such circumstances.  We have $T_{m,n}=\int_{-1}^1P_m(x)P_n(x)P_n(-x)\D x=0 $ for non-negative integers $ m$ and $n$ satisfying $ m>2n$, as indicated by the respective poles of the gamma factors in the denominator on the right-hand side of  Eq.~\ref{eq:T_m_n_CG}.

Actually, even without the prior knowledge of the standard Clebsch-Gordan theory, one can start from the initial condition $ T_{0,n}=\int_{-1}^1P_n(x)P_n(-x)\D x=2(-1)^{n}/(2n+1),n\in\mathbb Z_{\geq0}$, and build  Eq.~\ref{eq:T_m_n_CG} inductively from the recursion relation given by Eq.~\ref{eq:T_mu_nu_rec}.

The interpolation formula (known as Dougall's expansion in \cite[Ref.][p.~167]{HTF1})\begin{align*}P_{\nu}(x)={}&2\int_0^\pi\frac{\theta_{H}(\cos\beta-x)\cos\frac{(2\nu+1)\beta}{2}}{\sqrt{2(\cos\beta-x)}}\frac{\D\beta}{\pi}=2\int_0^\pi \sum_{\ell=0}^\infty P_\ell(x)\cos\frac{(2\ell+1)\beta}{2}\cos\frac{(2\nu+1)\beta}{2}\frac{\D\beta}{\pi}\notag\\={}&\sum^\infty_{\ell=0} P_\ell(x)\left[ \frac{\sin(\ell-\nu)\pi}{(\ell-\nu)\pi} +\frac{\sin(\ell+\nu+1)\pi}{(\ell+\nu+1)\pi}\right]\end{align*}allows us to sum  Eq.~\ref{eq:T_m_n_CG} as \begin{align*}T_{\mu,n}:={}&\int_{-1}^1P_\mu(x)P_n(x)P_n(-x)\D x=\sum_{\ell=0} ^\infty T_{\ell,n}\left[ \frac{\sin(\ell-\mu)\pi}{(\ell-\mu)\pi} +\frac{\sin(\ell+\mu+1)\pi}{(\ell+\mu+1)\pi}\right]\end{align*}for  $ \mu\in\mathbb C\smallsetminus\mathbb Z,n\in\mathbb Z_{\geq0}$. To identify the last term in the equation above with  the expression in Eq.~\ref{eq:T_mu_n_CG}, we note that the sum over $ \ell$  in fact truncates after finite terms, which reveals the expression \begin{align*}\frac{1}{\sin(\mu\pi)}\left\{(-1)^{n}\frac{\pi   \Gamma\left(\frac{2n+1-\mu }{2}\right)\Gamma\left(\frac{2n+2+\mu }{2}\right)}{ \left[\Gamma\left(\frac{1-\mu}{2}\right)\right]^2 \left[\Gamma\left(\frac{\mu+2 }{2}\right)\right]^2 \Gamma\left(\frac{2n+2-\mu }{2}\right)\Gamma\left(\frac{2n+3+\mu }{2}\right)}-\sum_{\ell=0} ^\infty T_{\ell,n}\left[ \frac{\sin(\ell-\mu)\pi}{(\ell-\mu)\pi} +\frac{\sin(\ell+\mu+1)\pi}{(\ell+\mu+1)\pi}\right]\right\}\end{align*}as a rational function of $\mu$, for whatever integer $n$. As  $ \mu$ approaches any integer $m$, it is clear that such a rational function tends to zero with $ O(\mu-m)$ convergence rate,  so it must be identically vanishing.

By Eq.~\ref{eq:LegendreP_nu} along with the Mehler-Dirichlet theory, we have the following computation when $ \nu$ is not an  integer:\begin{align*}P_\nu(x)P_\nu(-x)={}&\int_0^{2\pi}P_\nu((1-x^{2})\cos\phi-x^2)\frac{\D\phi}{2\pi}=2\int_0^{2\pi}\left[\int_0^\pi\frac{\theta_{H}(\cos\beta-(1-x^{2})\cos\phi+x^2)\cos\frac{(2\nu+1)\beta}{2}}{\sqrt{2(\cos\beta-(1-x^{2})\cos\phi+x^2)}}\frac{\D\beta}{\pi}\right]\frac{\D\phi}{2\pi}\notag\\={}&2\int_0^{2\pi}\left[\int_0^\pi \sum_{\ell=0}^\infty P_\ell((1-x^{2})\cos\phi-x^2)\cos\frac{(2\ell+1)\beta}{2}\cos\frac{(2\nu+1)\beta}{2}\frac{\D\beta}{\pi}\right]\frac{\D\phi}{2\pi}\notag\\={}&\sum^\infty_{\ell=0} P_\ell(x)P_\ell(-x)\left[ \frac{\sin(\ell-\nu)\pi}{(\ell-\nu)\pi} +\frac{\sin(\ell+\nu+1)\pi}{(\ell+\nu+1)\pi}\right],\end{align*}where we have integrated over  $ \beta$ and $ \phi$ separately in the last step. Thus, we can perform a decomposition\[T_{\mu,\nu}=\sum^\infty_{\ell=0}T_{\mu,\ell}\left[ \frac{\sin(\ell-\nu)\pi}{(\ell-\nu)\pi} +\frac{\sin(\ell+\nu+1)\pi}{(\ell+\nu+1)\pi}\right]=\frac{\sin(\nu\pi)(\tau_{\mu,\nu}+\tau_{\mu,-\nu-1})}{\left[\Gamma\left(\frac{1-\mu}{2}\right)\right]^2 \left[\Gamma\left(\frac{\mu+2 }{2}\right)\right]^2 },\]where \[\tau_{\mu,\lambda}:=\sum_{\ell=0}^\infty\frac{   \Gamma\left(\frac{2\ell+1-\mu }{2}\right)\Gamma\left(\frac{2\ell+2+\mu }{2}\right)}{ \Gamma\left(\frac{2\ell+2-\mu }{2}\right)\Gamma\left(\frac{2\ell+3+\mu }{2}\right)}\frac{1}{\lambda-\ell}=\frac{2\sin(\mu\pi)}{\lambda\pi^{2}}\left[\Gamma\left(\frac{1-\mu}{2}\right)\right]^2 \left[\Gamma\left(\frac{\mu+2 }{2}\right)\right]^2\,{_4F_3}\left(\left.\begin{array}{c}
1,\frac{1-\mu}{2},\frac{\mu+2 }{2},-\lambda\ \\[4pt]
\frac{2-\mu }{2},\frac{\mu+3 }{2},1-\lambda\ \\
\end{array}\right| 1\right)\]follows directly from the definition of $ _4F_3$. This verifies Eq.~\ref{eq:T_mu_nu_CG}.

One can prove  Eq.~\ref{eq:T_2m_nu_CG} by directly computing\begin{align}T_{0,\nu}={}&\frac{\sin(\nu\pi)}{\pi}\sum_{n=0}^\infty (-1)^{n}T_{0,n}\left( \frac{1}{\nu-n} -\frac{1}{\nu+n+1}\right)=\frac{\sin(\nu\pi)}{\pi}\sum_{n=0}^\infty \frac{2}{2n+1}\left( \frac{1}{\nu-n} -\frac{1}{\nu+n+1}\right)=\frac{2\cos(\nu\pi)}{2\nu+1}\label{eq:T_0_nu}\tag{\ref{eq:T_mu_nu_CG}$ _{(0,\nu)}$}\end{align}and invoking the recursion relation in Eq.~\ref{eq:T_mu_nu_rec}.
Once  Eq.~\ref{eq:T_2m_nu_CG} is given, and $ T_{2m+1,\nu}=0,m\in\mathbb Z_{\geq0}$ is obvious from symmetry considerations, we can interpolate $ T_{\mu,\nu}$ as follows:\begin{align*}T_{\mu,\nu}={}&\sum_{\ell=0}^\infty T_{\ell,\nu}\left[ \frac{\sin(\ell-\mu)\pi}{(\ell-\mu)\pi} +\frac{\sin(\ell+\mu+1)\pi}{(\ell+\mu+1)\pi}\right]=\frac{\sin(\mu\pi)}{\pi}\sum_{m=0}^\infty T_{2m,\nu}\left(\frac{1}{\mu-2m} -\frac{1}{2m+\mu+1}\right)\notag\\={}&\sin(\mu\pi)\cos(\nu\pi)\sum_{m=0}^\infty \frac{   \Gamma\left(\frac{2\nu+1-2m }{2}\right)\Gamma(\nu+1+m)}{ \left[\Gamma\left(\frac{1-2m}{2}\right)\right]^2 \left[\Gamma(m{+1})\right]^2 \Gamma(\nu+1-m)\Gamma\left(\frac{2\nu+3+2m }{2}\right)}\left( \frac{1}{\mu-2m} -\frac{1}{2m+\mu+1}\right),\end{align*} which in turn, directly accounts for the hypergeometric summation in Eq.~\ref{eq:T_mu_nu_CG_star} concerning $ _5F_4$. Exploiting  again Euler's reflection formula  $ \Gamma(z)\Gamma(1-z)=\pi/{\sin(\pi z)}$, one can rewrite the expression for $ T_{2m,\nu}$ as (taking appropriate limits when the resulting fractions assume  indeterminate forms)\begin{align*}T_{2m,\nu}=-\frac{\sin(\nu\pi)}{\pi}\frac{   [\Gamma(m+\frac{1}{2})]^2\Gamma(m-\nu)\Gamma(m+\nu+1)}{  \left[\Gamma(m{+1})\right]^2 \Gamma(m-\nu+\frac{1}{2})\Gamma(m+\nu+\frac{3}{2})},\quad m\in\mathbb Z_{\geq0},\end{align*}which specializes to Eq.~\ref{eq:T_2m_n_half_CG}.

To prove Eq.~\ref{eq:T_nu_nu_CG}, we show that the function \begin{align}\mathfrak T(z):=\frac{1}{\sin^2(\pi z)}\left\{T_{z,z}-\frac{1+2\cos(\pi z)}{3}\frac{\pi\Gamma\left(\frac{z+1 }{2}\right) \Gamma\left(\frac{3 z+2 }{2}\right)}{\left[ \Gamma\left(\frac{1-z}{2}\right)\right]^2\left[ \Gamma\left(\frac{z+2 }{2}\right)\right]^3 \Gamma\left(\frac{3 z+3 }{2}\right)}\right\}\equiv\mathfrak T(-z-1)\label{eq:T_van_def}\end{align}is analytic over the whole complex $ z$-plane, with asymptotic  behavior $ \mathfrak T(z)=O(N^{-9/4}),z\in C_N,N\to+\infty$. Here,    $N$ is a positive integer and the square contour   $ C_N$    has vertices  $(1-4N)/4-iN$, $(1+4N)/4-iN $, $(1+4N)/4+iN $ and $(1-4N)/4-iN $.

First, to verify that $ \mathfrak T(z)$ is an entire function, we only need to check that the expression inside the braces of Eq.~\ref{eq:T_van_def} has vanishing derivative whenever $z$ is an integer, so that the numerator of Eq.~\ref{eq:T_van_def} encounters (at least) a second-order zero at every integer $ z=n\in\mathbb Z$. At the positive odd integers $ z=2n+1,n\in\mathbb Z_{\geq0}$, one may directly compute\begin{align*}\left.\frac{\partial T_{z,z}}{\partial z}\right|_{z=2n+1}={}&2\int_{-1}^1\left.\frac{\partial P_{z}(x)}{\partial z}\right|_{z=2n+1}P_{2n+1}(x)P_{2n+1}(-x)\D x+\int_{-1}^1[P_{2n+1}(-x)]^{2}\left.\frac{\partial P_{z}(x)}{\partial z}\right|_{z=2n+1}\D x=\left.\frac{\partial T_{z,2n+1}}{\partial z}\right|_{z=2n+1} =0\intertext{from Eq.~\ref{eq:T_mu_n_CG}, which coincides with the behavior \begin{align*}\frac{1+2\cos(\pi z)}{3}\frac{\pi\Gamma\left(\frac{z+1 }{2}\right) \Gamma\left(\frac{3 z+2 }{2}\right)}{\left[ \Gamma\left(\frac{1-z}{2}\right)\right]^2\left[ \Gamma\left(\frac{z+2 }{2}\right)\right]^3 \Gamma\left(\frac{3 z+3 }{2}\right)}=O((z-(2n+1))^2)\end{align*}attributed to the factor $ [\Gamma((1-z)/2)]^2=O((z-(2n+1))^{-2})$.   Meanwhile, at the non-negative even integers  $ z=2n,n\in\mathbb Z_{\geq0}$,}\left.\frac{\partial T_{z,z}}{\partial z}\right|_{z=2n}={}&2\int_{-1}^1\left.\frac{\partial P_{z}(x)}{\partial z}\right|_{z=2n}P_{2n}(x)P_{2n}(-x)\D x+\int_{-1}^1[P_{2n}(-x)]^{2}\left.\frac{\partial P_{z}(x)}{\partial z}\right|_{z=2n}\D x=3\left.\frac{\partial T_{z,2n}}{\partial z}\right|_{z=2n} \notag\\={}&3T_{2n,2n}\left.\frac{\partial}{\partial z}\right|_{z=2n}\log\frac{   \Gamma\left(\frac{4n+1-z }{2}\right)\Gamma\left(\frac{4n+2+z }{2}\right)}{ \left[\Gamma\left(\frac{1-z}{2}\right)\right]^2 \left[\Gamma \left(\frac{z+2 }{2}\right)\right]^2 \Gamma\left(\frac{4n+2-z }{2}\right)\Gamma\left(\frac{4n+3+z }{2}\right)}\end{align*}is also a result of Eq.~\ref{eq:T_mu_n_CG}.
As we have \begin{align*}3\left.\frac{\partial}{\partial z}\right|_{z=\nu}\log\frac{   \Gamma\left(\frac{2\nu+1-z }{2}\right)\Gamma\left(\frac{2\nu+2+z }{2}\right)}{ \left[\Gamma\left(\frac{1-z}{2}\right)\right]^2 \left[\Gamma \left(\frac{z+2 }{2}\right)\right]^2 \Gamma\left(\frac{2\nu+2-z }{2}\right)\Gamma\left(\frac{2\nu+3+z }{2}\right)}-\left.\frac{\partial}{\partial z}\right|_{z=\nu}\log\frac{\Gamma\left(\frac{z+1 }{2}\right) \Gamma\left(\frac{3 z+2 }{2}\right)}{\left[ \Gamma\left(\frac{1-z}{2}\right)\right]^2\left[ \Gamma\left(\frac{z+2 }{2}\right)\right]^3 \Gamma\left(\frac{3 z+3 }{2}\right)}=-2\pi\tan\frac{\nu\pi}{2},\end{align*}which vanishes when $ \nu=2n$ is a non-negative even number,  it is clear that the numerator of Eq.~\ref{eq:T_van_def} has $ O((z-2n)^2)$ behavior for $ n\in\mathbb Z_{\geq0}$.

Then, we investigate the asymptotic behavior of  $ \mathfrak T(z)$ as $ z\to\infty$. For any point $ z$ sitting on the right half of the contour $  C_N$  (where $N$ is a large positive integer) satisfying $ \R z>-\frac{1}{2}$, we have a uniform asymptotic formula \begin{align}T_{ z, z}={}&\left[1+O\left(\frac{1}{N}\right)\right]\int_0^{\pi/2}\left\{J_0\left( \left( z+\frac{1}{2}\right)\theta \right)[1+\cos(\pi  z)]+Y_0\left( \left(z+\frac{1}{2}\right)\theta \right)\sin(\pi  z)\right\}J_0\left( \left(z+\frac{1}{2}\right)\theta \right)\times\notag\\&\times\left[J_0\left( \left(  z+\frac{1}{2}\right)\theta \right)\cos(\pi  z)+Y_0\left( \left(z+\frac{1}{2}\right)\theta \right)\sin(\pi  z)\right]\sqrt{\frac{\theta^{3}}{\sin\theta}}\D \theta.\label{eq:T_z_z_est}\end{align}Here, we have used the following transformations for $ -\pi<\arg\nu<\pi$:\begin{align*}T_{\nu,\nu}={}&\int_0^\pi [P_\nu(\cos\theta)]^{2}P_\nu(-\cos\theta)\sin\theta\D\theta=\int_0^{\pi/2}[P_\nu(\cos\theta)+P_\nu(-\cos\theta)]P_\nu(\cos\theta)P_\nu(-\cos\theta)\sin\theta\D\theta\notag\\={}&\int_0^{\pi/2}\left\{P_\nu(\cos\theta)[1+\cos(\nu\pi)]-\frac{2}{\pi}Q_\nu(\cos\theta)\sin(\nu\pi)\right\}P_\nu(\cos\theta)\left[P_\nu(\cos\theta)\cos(\nu\pi)-\frac{2}{\pi}Q_\nu(\cos\theta)\sin(\nu\pi)\right]\sin\theta\D\theta,\end{align*}along with the asymptotic formulae (see \cite[Ref.][Chap.~12, Eqs.~12.18 and 12.25]{Olver1974} or \cite[Ref.][Eqs.~43 and 46]{Jones2001}):\begin{align*}P_{\nu}(\cos\theta)=\sqrt{\frac{\theta}{\sin\theta}}J_0\left( \left(\nu+\frac{1}{2}\right)\theta \right)+O\left( \frac{1}{2\nu+1}  \right),\quad Q_\nu(\cos\theta)=-\frac{\pi}{2}\sqrt{\frac{\theta}{\sin\theta}}Y_0\left( \left(\nu+\frac{1}{2}\right)\theta \right)+O\left( \frac{1}{2\nu+1}  \right)\end{align*}where the error bounds are uniform for the angular variables $ \theta\in(0,\pi/2]$, so long as $ |\nu|\to\infty,-\pi<\arg \nu<\pi$ \cite{Olver1974,Jones2001}. For the case with  degree   $ \nu=(1+4N)/4$ where $ N$ is a large positive integer, we  split the integral on the right-hand side of   Eq.~\ref{eq:T_z_z_est} into two parts:\begin{align*}\int_0^{\pi/2}(\cdots)\D\theta=\int_0^{1/\sqrt{2\nu+1}}(\cdots)\D\theta+\int_{1/\sqrt{2\nu+1}}^{\pi/2}(\cdots)\D\theta.\end{align*}For the first portion, we may deduce\begin{align*}&\int_0^{1/\sqrt{2\nu+1}}\left\{J_0\left( \left( \nu+\frac{1}{2}\right)\theta \right)[1+\cos(\nu\pi)]+Y_0\left( \left(\nu+\frac{1}{2}\right)\theta \right)\sin(\nu\pi  )\right\}J_0\left( \left(\nu+\frac{1}{2}\right)\theta \right)\times\notag\\&\times\left[J_0\left( \left(  \nu+\frac{1}{2}\right)\theta \right)\cos(\nu\pi  )+Y_0\left( \left(\nu+\frac{1}{2}\right)\theta \right)\sin(\nu\pi  )\right]\sqrt{\frac{\theta^{3}}{\sin\theta}}\D\theta=\frac{8[1+\cos(\nu\pi)][1+2\cos(\nu\pi)]}{3\sqrt{3}\pi(1+2\nu)^2}\left[1+O\left( \frac{1}{\sqrt[4]{2\nu+1}} \right)\right]\end{align*}from three integral formulae valid for $ a>0$ \cite[Ref.][items~2.12.42.25, 2.13.22.11, 2.13.25.1]{PBMVol2}:\footnote{The first integral here belongs to the  generalized Weber-Schafheitlin type \cite[Ref.][\S13.46]{Watson1995Bessel}. The proof of the second integral can be found in \cite{GervoisNavelet1984}. The third integral can be derived from the first one, in conjunction with    Nicholson's formula \cite[Ref.][\S13.73]{Watson1995Bessel} $ [J_0(x)]^2+[Y_0(x)]^2=\frac{8}{\pi^2}\int_0^{\infty}K_0(2x\sinh t)\D t$ and a special case of the modified Weber-Schafheitlin integral \cite[Ref.][\S13.45]{Watson1995Bessel} $ \int_0^\infty J_{0}(x)K_0(2x\sinh t)x\D x=(1+4\sinh^2 t)^{-1}$.}\begin{align*}\int_0^\infty [J_0(ax)]^3x\D x=\frac{2}{\sqrt{3}\pi a^2},\quad \int_0^\infty [J_0(ax)]^2 Y_0(ax)x\D x=0,\quad \int_0^\infty J_0(ax)[Y_0(ax)]^2x\D  x=\frac{2}{3\sqrt{3}\pi a^2},\end{align*} as well as the familiar asymptotic behavior of Bessel functions for large arguments $x\gg1 $:\begin{align*}J_{0}(x)\sim\sqrt{\frac{2}{\pi x}}\cos\left( x-\frac{\pi}{4} \right),\quad Y_0(x)\sim\sqrt{\frac{2}{\pi x}}\sin\left( x-\frac{\pi}{4} \right),\end{align*}which allows us to verify \begin{align*}&\int^\infty_{1/\sqrt{2\nu+1}}\left\{J_0\left( \left( \nu+\frac{1}{2}\right)x \right)[1+\cos(\nu\pi)]+Y_0\left( \left(\nu+\frac{1}{2}\right)x \right)\sin(\nu\pi  )\right\}J_0\left( \left(\nu+\frac{1}{2}\right)x\right)\times\notag\\&\times\left[J_0\left( \left(  \nu+\frac{1}{2}\right)x \right)\cos(\nu\pi  )+Y_0\left( \left(\nu+\frac{1}{2}\right)x \right)\sin(\nu\pi  )\right]x\D x=O\left( \frac{1}{(2\nu+1)^{9/4}} \right)\end{align*}via integration by parts.\footnote{To wit, we  need to estimate \begin{align*}&\int_{1/\sqrt{2\nu+1}}^\infty\frac{\sin(a(2\nu+1)x+b)}{\sqrt{x}}\D x=\frac{\sin b}{a\sqrt{2\nu+1}}\int_{\sqrt{2\nu+1}}^\infty\frac{1}{\sqrt{y}}\frac{\partial}{\partial y}\sin(ay)\D y-\frac{\cos b}{a\sqrt{2\nu+1}}\int_{\sqrt{2\nu+1}}^\infty\frac{1}{\sqrt{y}}\frac{\partial}{\partial y}\cos(ay)\D y\notag\\={}&\frac{\cos(a\sqrt{2\nu+1}+b)}{a(2\nu+1)^{3/4}} +\frac{\sin b}{2a\sqrt{2\nu+1}}\int_{\sqrt{2\nu+1}}^\infty\frac{\sin(ay)}{y^{3/2}}\D y-\frac{\cos b}{2a\sqrt{2\nu+1}}\int_{\sqrt{2\nu+1}}^\infty\frac{\cos(ay)}{y^{3/2}}\D y=O\left( \frac{1}{(2\nu+1)^{3/4}} \right),\end{align*}where the coefficients $ a,b\in\mathbb R $ arise from the product-to-sum formulae of trigonometric functions applied to asymptotic expansions for the products of Bessel functions.} Meanwhile, using the  asymptotic forms of Bessel functions and integration by parts, one can also verify that\footnote{One might wish to compare this with the direct application of a hypergeometric asymptotic formula for Legendre functions (see \cite[Ref.][p.~295]{Hobson1931} or \cite[Ref.][p.~162]{HTF1}):\begin{align*}P_{\nu}(\cos\theta)=\frac{\Gamma(\nu+1)}{\Gamma(\nu+\frac{3}{2})}\sqrt{\frac{2}{\pi\sin\theta}}\left[ \cos\left( \left(\nu+\frac{1}{2}\right)\theta-\frac{\pi}{4} \right)+O\left( \frac{1}{\nu} \right) \right],\quad \text{where }\theta\in[\varepsilon,\pi-\varepsilon],\varepsilon>0.\end{align*}}\begin{align*}&\int^{\pi/2}_{1/\sqrt{2\nu+1}}\left\{J_0\left( \left( \nu+\frac{1}{2}\right)\theta \right)[1+\cos(\nu\pi)]+Y_0\left( \left(\nu+\frac{1}{2}\right)\theta \right)\sin(\nu\pi  )\right\}J_0\left( \left(\nu+\frac{1}{2}\right)\theta \right)\times\notag\\&\times\left[J_0\left( \left(  \nu+\frac{1}{2}\right)\theta \right)\cos(\nu\pi  )+Y_0\left( \left(\nu+\frac{1}{2}\right)\theta \right)\sin(\nu\pi  )\right]\sqrt{\frac{\theta^{3}}{\sin\theta}}\D\theta\notag\\\sim&\left[ \frac{4}{\pi(2\nu+1)} \right]^{3/2}\int^{\pi-(1/\sqrt{2\nu+1})}_{1/\sqrt{2\nu+1}}\cos^2\left(  \left( \nu+\frac{1}{2}\right)\theta-\frac{\pi}{4} \right)\cos\left(  \left( \nu+\frac{1}{2}\right)(\pi-\theta)-\frac{\pi}{4} \right)\frac{\D\theta}{\sqrt{\sin\theta}}=O\left( \frac{1}{(2\nu+1)^{9/4}} \right).\end{align*} Thus,  the asymptotic expansion\begin{align*}T_{\nu,\nu}=\frac{8[1+\cos(\nu\pi)][1+2\cos(\nu\pi)]}{3\sqrt{3}\pi(1+2\nu)^2}\left[1+O\left( \frac{1}{\sqrt[4]{2\nu+1}} \right)\right]\end{align*}
is established for  $\nu=(1+4N)/4>0$.
For an arbitrary point $ z$ satisfying $ \R z>-1/2$, we may modify the  aforementioned procedure  by deforming the integral path joining the points $ 0$ and $ \pi/2$ into the sum of two line segments:\begin{align*}\int_0^{\pi/2}(\cdots)\D\theta=\int_0^{\sqrt{|2z+1|}/(2z+1)}(\cdots)\D\theta+\int_{\sqrt{|2z+1|}/(2z+1)}^{\pi/2}(\cdots)\D\theta,\end{align*}followed by the integrations $ \int_0^\infty(\cdots)\D x$  and $  \int_{\sqrt{|2z+1|}/(2z+1)}^\infty(\cdots)\D x$ along  contours that run to infinity in such a manner that $ (2z+1)x\to+\infty$ along the positive real axis. This results in\begin{subequations}
\begin{align}T_{z,z}={}&\frac{8[1+\cos(\pi z)][1+2\cos(\pi z)]}{3\sqrt{3}\pi(1+2z)^2}\left[1+O\left( \frac{1}{\sqrt[4]{N}} \right)\right],\quad z\in C_N,\R z>-\frac{1}{2},\label{eq:T_z_z_asympt_contour_right}\intertext{where the error bound is uniform along the contour $ C_N$, on which both  $ |\tan(\pi z)|$ and $ |\cot(\pi z)|$ are uniformly bounded.
After reflection $z\mapsto-z-1 $, we obtain}T_{z,z}={}&\frac{8[1-\cos(\pi z)][1-2\cos(\pi z)]}{3\sqrt{3}\pi(1+2z)^2}\left[1+O\left( \frac{1}{\sqrt[4]{N}} \right)\right],\quad z\in C_N,\R z<-\frac{1}{2}.\label{eq:T_z_z_asympt_contour_left}\end{align}   \end{subequations}We may combine Eqs.~\ref{eq:T_z_z_asympt_contour_right} and \ref{eq:T_z_z_asympt_contour_left} into a single formula\begin{align*}T_{z,z}=\frac{1+2\cos(\pi z)}{3}\frac{\pi\Gamma\left(\frac{z+1 }{2}\right) \Gamma\left(\frac{3 z+2 }{2}\right)}{\left[ \Gamma\left(\frac{1-z}{2}\right)\right]^2\left[ \Gamma\left(\frac{z+2 }{2}\right)\right]^3 \Gamma\left(\frac{3 z+3 }{2}\right)}\left[1+O\left( \frac{1}{\sqrt[4]{N}} \right)\right],\quad z\in C_N,\end{align*}which also implies the  bound estimate $\mathfrak T(z)=O(N^{-9/4})$ uniformly applicable to all the points  $z$  on the rectangular contour $ C_N $, thanks to the inequalities \[\sup_{z\in C_N} |\cot(\pi z)|\leq\max(1,\coth\pi)=\coth\pi,\quad\sup _{z\in C_N} \frac{1}{|\sin(\pi z)|}\leq\max\left(\sqrt{2},\frac{1}{\sinh\pi}\right)=\sqrt{2},\]for  $N\in\mathbb Z_{>0}$. By Cauchy's integral formula,  we  have the identity \begin{align*}\mathfrak T(\nu)={}&\lim_{N\to\infty}\oint_{C_N}\frac{\mathfrak T(z)}{z-\nu}\frac{\D z}{2\pi i}=0,\quad \forall\nu\in\mathbb C,\end{align*}which proves Eq.~\ref{eq:T_nu_nu_CG}.

Both Eqs.~\ref{eq:T_2nu1_nu_CG} and \ref{eq:T_2nu2_nu_CG} follow directly from the recursion relation (Eq.~\ref{eq:T_mu_nu_rec}).
 \qed
\end{proof}

\begin{remark}  As in our proof of    Eq.~\ref{eq:T_nu_nu_CG}, we can use Bessel functions to establish  the following asymptotic behavior for $ |z|\to\infty,\R z>-1/2$:\begin{align*}T_{\rho z,z}\sim{}&\frac{8}{\pi}\frac{\cos(\rho\pi z+\pi z  )+\cos(\pi z  )}{\rho\sqrt{4-\rho^{2}}(1+2z)^{2}}+\frac{16}{\pi^{2}}\frac{\sin(\rho\pi z)\sin(\pi z)}{\rho\sqrt{4-\rho^{2}}(1+2z)^{2}}\left(\pi-\arcsin\sqrt{1-\frac{\rho^{2}}{4}}\right),&& 0<\rho<2;\\T_{\rho z,z}\sim{}&\frac{8}{\pi}\frac{\sin(\rho\pi z-\pi z  )-\sin(\pi z)}{\rho\sqrt{\rho^{2}-4}(1+2z)^{2}}-\frac{16}{\pi^{2}}\frac{\sin(\rho\pi z)\sin(\pi z)}{\rho\sqrt{\rho^{2}-4}(1+2z)^{2}}\sinh^{-1}\sqrt{\frac{\rho^{2}}{4}-1},&&\rho>2.\intertext{along with an asymptotic reflection formula $ T_{\rho(-z-1),-z-1}\sim e^{i\pi(1-\rho)\arg z/|\arg z|}T_{\rho z,z},0<|\arg z|<\pi$ for $ \rho>0,|z|\to\infty$. Put differently, for large $ |z|$ and $ -\pi<\arg z<\pi$, we have an ``asymptotic trigonometric modulation'' of the standard Clebsch-Gordan integral formulae:}T_{\rho z,z}\sim{}&\frac{\pi  \Gamma \left(\frac{1+(2-\rho)z}{2}\right) \Gamma \left(\frac{(\rho  +2)z+2}{2}\right)}{\left[\Gamma \left(\frac{1-\rho z}{2}\right)\right]^2 \left[\Gamma \left(\frac{2+ \rho z }{2}\right)\right]^2\Gamma \left(\frac{2+(2-\rho)  z}{2}\right)  \Gamma \left(\frac{(\rho+2)  z+3}{2}\right)}\times\notag\\&\times\left[ \frac{\cos(\rho\pi z+\pi z  )+\cos(\pi z  )}{1+\cos(\rho\pi z)} +\frac{2}{\pi}\frac{\sin(\rho\pi z)\sin(\pi z)}{1+\cos(\rho\pi z)}\left(\pi-\arcsin\sqrt{1-\frac{\rho^{2}}{4}}\right)\right],&& 0<\rho<2;\notag\\T_{\rho z,z}\sim{}&\frac{\pi  \Gamma \left(\frac{1+(2-\rho)z}{2}\right) \Gamma \left(\frac{(\rho  +2)z+2}{2}\right)}{\left[\Gamma \left(\frac{1-\rho z}{2}\right)\right]^2 \left[\Gamma \left(\frac{2+ \rho z }{2}\right)\right]^2\Gamma \left(\frac{2+(2-\rho)  z}{2}\right)  \Gamma \left(\frac{(\rho+2)  z+3}{2}\right)}\times\notag\\&\times\left[ \frac{\sin(\rho\pi z-\pi z  )-\sin(\pi z)}{1+\cos(\rho\pi z)} -\frac{2}{\pi}\frac{\sin(\rho\pi z)\sin(\pi z)}{1+\cos(\rho\pi z)}\sinh^{-1}\sqrt{\frac{\rho^{2}}{4}-1}\right]\cot\frac{(\rho-2)\pi z}{2},&&\rho>2.\end{align*}For $ \rho\neq1$, the right-hand side of the penultimate (resp.~last) asymptotic formula diverges at $ z=-2/(2+\rho)$ (resp.~$ z=-1/\rho$), so the asymptotic analysis does not result in exact  forms of $ T_{\rho z,z}$ ($ \rho\neq1$) for  finitely sized $|z|$.  \eor\end{remark}\begin{corollary}[Some Multiple Elliptic Integrals in Clebsch-Gordan Forms]We have the integral formulae{\allowdisplaybreaks
\begin{align}\frac{\pi^2}{4}T_{0,-1/2}={}&2\int_0^1\mathbf K(\sqrt{t})\mathbf K(\sqrt{1-t})\D t=\frac{\pi^{3}}{4},\label{eq:T_zero_minus_half}\\\frac{\pi^2}{4}T_{0,1/2}={}&2\int_0^1[2\mathbf E(\sqrt{t})-\mathbf K(\sqrt{t})][2\mathbf E(\sqrt{1-t})-\mathbf K(\sqrt{1-t})]\D t=0,\label{eq:T_zero_half}\\
\frac{\pi^{3}}{8}T_{-1/2,-1/2}={}&2\int_{0}^1[\mathbf K(\sqrt{1-t})]^2\mathbf K(\sqrt{t})\D t=\frac{[\Gamma(\frac{1}{4})]^{8}}{192\pi^{2}},\label{eq:T_half_half}\\\frac{\pi^{3}}{8}T_{-1/3,-1/3}={}&\frac{27}{2}\int_0^1\frac{ (1-p^2) p(2+p)}{\sqrt{3+6 p} (1+p+p^2)}\left[ \mathbf K\left( \sqrt{\frac{p^3(2+p)}{1+2p}} \right) \right]^{2}\mathbf K\left( \sqrt{1-\frac{p^3(2+p)}{1+2p}} \right)\D p\notag\\={}&\frac{27}{6}\int_0^1\frac{ (1-p^2) p(2+p)}{\sqrt{1+2 p} (1+p+p^2)}\left[ \mathbf K\left( \sqrt{1-\frac{p^3(2+p)}{1+2p}} \right) \right]^{2}\mathbf K\left( \sqrt{\frac{p^3(2+p)}{1+2p}} \right)\D p=\frac{3\sqrt{3}[\Gamma(\frac13)]^9}{256\pi^2},\label{eq:T_third_third}\\\frac{\pi^{3}}{8}T_{-1/4,-1/4}={}&\int_0^1\frac{4}{(1+\sqrt{\vphantom{1}u})^{3/2}}\left[\mathbf K\left( \sqrt{\frac{1-\sqrt{\vphantom{1}u}}{1+\sqrt{\vphantom{1}u}}} \right)\right]^2\mathbf K\left( \sqrt{\frac{2\sqrt{\vphantom{1}u}}{1+\sqrt{\vphantom{1}u}}} \right)\D u=4\sqrt{2}\int_{0}^1\frac{(1-t)[\mathbf K(\sqrt{t})]^2\mathbf K(\sqrt{1-t})}{(1+t)^{3/2}}\D t\notag\\={}&\int_0^1\left(\frac{2}{1+\sqrt{\vphantom{1}u}}\right)^{3/2}\left[\mathbf K\left( \sqrt{\frac{2\sqrt{\vphantom{1}u}}{1+\sqrt{\vphantom{1}u}}} \right)\right]^2\mathbf K\left( \sqrt{\frac{1-\sqrt{\vphantom{1}u}}{1+\sqrt{\vphantom{1}u}}} \right)\D u=4\int_{0}^1\frac{(1-t)[\mathbf K(\sqrt{1-t})]^2\mathbf K(\sqrt{t})}{(1+t)^{3/2}}\D t=\frac{[\Gamma(\frac{1}{8})\Gamma(\frac{3}{8})]^{2}}{24},\label{eq:T_quarter_quarter}\\\frac{\pi^{3}}{8}T_{-1/6,-1/6}={}&\frac{27}{2\sqrt{2}}\int_{-1}^1\left[\mathbf K\left( \sqrt{\frac{1-x}{2}} \right)\right]^2\mathbf K\left( \sqrt{\frac{1+x}{2}} \right)\frac{1-x^2}{(3+x^2)^{7/4}}\D x=\frac{27}{4}\int_{0}^1\frac{t(1-t)[\mathbf K(\sqrt{1-t})]^2\mathbf K(\sqrt{t})}{(1-t+t^{2})^{7/4}}\D t=\frac{[\Gamma (\frac{1}{4})]^{4}}{8\sqrt{2\sqrt{3}}},\label{eq:T_sixth_sixth}\\\frac{\pi^{3}}{8}T_{1/2,-3/4}={}&2\sqrt{2}\int_0^1\frac{2\mathbf E(\sqrt{\vphantom{1}u})-\mathbf K(\sqrt{\vphantom{1}u})}{1+\sqrt{\vphantom{1}u}}\mathbf K\left( \sqrt{\frac{2\sqrt{\vphantom{1}u}}{1+\sqrt{\vphantom{1}u}}} \right)\mathbf K\left( \sqrt{\frac{1-\sqrt{\vphantom{1}u}}{1+\sqrt{\vphantom{1}u}}} \right)\D u=\sqrt{2}\pi,\label{eq:T_half_quarter}\\\frac{\pi^{3}}{8}T_{3/2,-1/4}={}&\frac{2\sqrt{2}}{3}\int_0^1\frac{8(1-2u)\mathbf E(\sqrt{\vphantom{1}u})-(5-8u)\mathbf K(\sqrt{\vphantom{1}u})}{1+\sqrt{\vphantom{1}u}}\mathbf K\left( \sqrt{\frac{2\sqrt{\vphantom{1}u}}{1+\sqrt{\vphantom{1}u}}} \right)\mathbf K\left( \sqrt{\frac{1-\sqrt{\vphantom{1}u}}{1+\sqrt{\vphantom{1}u}}} \right)\D u=-\frac{\sqrt{2}\pi}{9}. \label{eq:T_sesqui_quarter}
\end{align}}\end{corollary}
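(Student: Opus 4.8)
The plan is to recognize every integral on the left of Eqs.~\ref{eq:T_zero_minus_half}--\ref{eq:T_sesqui_quarter} as a disguised specialization of the generalized Clebsch-Gordan integral $T_{\mu,\nu}=\int_{-1}^1 P_\mu(x)P_\nu(x)P_\nu(-x)\,\D x$, and to evaluate each such $T_{\mu,\nu}$ in two independent ways: an \emph{analytic} reduction that reproduces the displayed multiple elliptic integral, and a \emph{closed-form} value read off from Proposition~\ref{prop:gen_CG}. On the analytic side I would substitute $x=\cos\theta$, equivalently $t=(1-x)/2=\sin^2(\theta/2)$ so that $\sqrt t=\sin(\theta/2)$ and $\sqrt{1-t}=\cos(\theta/2)$, and replace each Legendre factor by its Kleiber--Ramanujan avatar from Lemma~\ref{lm:P_nu_spec}. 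For $\nu=-1/2$, Eq.~\ref{eq:P_half_sin2} gives $P_{-1/2}(x)=\frac{2}{\pi}\mathbf K(\sqrt t)$ and $P_{-1/2}(-x)=\frac{2}{\pi}\mathbf K(\sqrt{1-t})$, so $T_{-1/2,-1/2}=\frac{16}{\pi^3}\int_0^1[\mathbf K(\sqrt t)]^2\mathbf K(\sqrt{1-t})\,\D t$, and the two displayed forms in Eq.~\ref{eq:T_half_half} are interchanged by $t\mapsto1-t$. For $\nu=-1/4$ one uses Eq.~\ref{eq:P_quarter_sin4} together with the Landen identities already invoked for Eq.~\ref{eq:LegendreP_quarter_spec'}, producing the weight $(1-t)/(1+t)^{3/2}$; for $\nu=-1/3$ and $\nu=-1/6$ one invokes the rational $p$-parametrizations Eqs.~\ref{eq:P_third_a}--\ref{eq:P_third_b} and the unified form Eq.~\ref{eq:P_sixth_unified}, whose Jacobians generate the weights $(1-p^2)p(2+p)/[\sqrt{3+6p}\,(1+p+p^2)]$ and $(1-x^2)/(3+x^2)^{7/4}$, respectively.

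For the closed-form side, the diagonal entries $\nu=-1/2,-1/3,-1/4,-1/6$ are evaluated directly from $T_{\nu,\nu}$ in Eq.~\ref{eq:T_nu_nu_CG}. At $\nu=-1/2$ one finds $T_{-1/2,-1/2}=\frac{1}{3}\pi[\Gamma(\frac14)]^2/[\Gamma(\frac34)]^6$, which collapses to $[\Gamma(\frac14)]^8/(24\pi^5)$ via Euler's reflection formula $\Gamma(\frac14)\Gamma(\frac34)=\pi\sqrt2$, whence $\frac{\pi^3}{8}T_{-1/2,-1/2}=[\Gamma(\frac14)]^8/(192\pi^2)$; the analogous gamma-quotient simplifications (reflection plus the Legendre duplication formula) at $\nu=-1/3,-1/4,-1/6$ produce $3\sqrt3[\Gamma(\frac13)]^9/(256\pi^2)$, $[\Gamma(\frac18)\Gamma(\frac38)]^2/24$, and $[\Gamma(\frac14)]^4/(8\sqrt{2\sqrt3})$. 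The two $\mu=0$ entries use the value $T_{0,\nu}=2\cos(\nu\pi)/(2\nu+1)$ of Eq.~\ref{eq:T_0_nu}: a removable limit at $\nu=-1/2$ gives $T_{0,-1/2}=\pi$, hence Eq.~\ref{eq:T_zero_minus_half}, while $\cos(\pi/2)=0$ gives $T_{0,1/2}=0$, hence Eq.~\ref{eq:T_zero_half}, with the $\mathbf E$-factors there supplied by $P_{1/2}(\cos\theta)=\frac{2}{\pi}[2\mathbf E(\sin\frac\theta2)-\mathbf K(\sin\frac\theta2)]$.

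The two off-diagonal entries require no new summation once I use the degree symmetry $T_{\mu,\nu}=T_{\mu,-\nu-1}$ of Eq.~\ref{eq:T_mu_nu_symm}. Since $P_{-3/4}=P_{-1/4}$, I have $T_{1/2,-3/4}=T_{1/2,-1/4}$ and $T_{3/2,-1/4}=T_{3/2,-3/4}$, and both sit at the effective degree $\nu=-3/4$. The value $T_{3/2,-3/4}$ falls out of the recursion Eq.~\ref{eq:T_mu_nu_rec} at $\mu=1/2$, $\nu=-3/4$, where the coefficient of $T_{-1/2,-3/4}$ vanishes because $\mu^2-(2\nu+1)^2=\frac14-\frac14=0$, giving at once $\frac92 T_{3/2,-3/4}=-4\sqrt2/\pi^2$, i.e.\ $T_{3/2,-3/4}=-8\sqrt2/(9\pi^2)$ and $\frac{\pi^3}{8}T_{3/2,-1/4}=-\sqrt2\pi/9$; equivalently this is the special case $T_{2\nu-1,\nu}$ of Eq.~\ref{eq:T_2nu1_nu_CG}. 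The remaining value $T_{1/2,-3/4}$ is the special case $T_{2\nu+2,\nu}$ of Eq.~\ref{eq:T_2nu2_nu_CG} at $\nu=-3/4$ (the recursion being degenerate here), yielding $T_{1/2,-3/4}=8\sqrt2/\pi^2$ and $\frac{\pi^3}{8}T_{1/2,-3/4}=\sqrt2\pi$. The $\mathbf E$-factors in these two integrals come from $P_{1/2}$ and from $P_{3/2}(\cos\theta)=\frac{2}{3\pi}[8\cos\theta\,\mathbf E(\sin\frac\theta2)-(4\cos\theta+1)\mathbf K(\sin\frac\theta2)]$.

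I expect the main obstacle to lie entirely on the analytic side, in the degree-$-1/3$ and degree-$-1/6$ cases. There the correspondence between $P_{-1/3}$, $P_{-1/6}$ and $\mathbf K$ is available only through the nonlinear cubic/sextic parametrizations in $p$, so one must track the change of variables $x\leftrightarrow p$ and the accompanying modulus transformations of $\mathbf K$ precisely enough to match both the integrand modulus (e.g.\ $\sqrt{p^3(2+p)/(1+2p)}$) and the algebraic weight. Checking that the two displayed forms of Eqs.~\ref{eq:T_third_third} and \ref{eq:T_sixth_sixth}, which differ by interchanging $[\mathbf K]^2$ between the two moduli, are indeed equal under the complementary reflection $t\mapsto1-t$ is the most delicate bookkeeping; once the substitutions are in place, every constant follows from the closed forms already established in Proposition~\ref{prop:gen_CG} together with standard gamma-function identities.
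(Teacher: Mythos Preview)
Your proposal is correct and matches the paper's own approach: the paper's proof is a single sentence citing Eqs.~\ref{eq:T_2m_n_half_CG}, \ref{eq:T_nu_nu_CG} and \ref{eq:T_2nu2_nu_CG} from Proposition~\ref{prop:gen_CG}, exactly as you do (your use of Eq.~\ref{eq:T_0_nu} for the $\mu=0$ cases and of Eq.~\ref{eq:T_2nu1_nu_CG} via symmetry for $T_{3/2,-1/4}$ are equivalent routes, since $T_{3/2,-1/4}=T_{2\nu+2,\nu}|_{\nu=-1/4}$ is already covered by Eq.~\ref{eq:T_2nu2_nu_CG} directly). Your explicit discussion of the analytic side---the substitutions from Lemma~\ref{lm:P_nu_spec} that convert each $T_{\mu,\nu}$ into the displayed elliptic-integral form---supplies detail that the paper leaves implicit, and the bookkeeping concern you flag for $\nu=-1/3,-1/6$ is real but routine once the parametrizations of Eqs.~\ref{eq:P_third_a}--\ref{eq:P_third_b} and \ref{eq:P_sixth_unified} are in hand.
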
\begin{proof}Special cases of Eqs.~\ref{eq:T_2m_n_half_CG}, \ref{eq:T_nu_nu_CG} and \ref{eq:T_2nu2_nu_CG} lead to Eqs.~\ref{eq:T_zero_minus_half}, \ref{eq:T_zero_half}, \ref{eq:T_half_half}, \ref{eq:T_third_third}, \ref{eq:T_quarter_quarter}, \ref{eq:T_sixth_sixth}, \ref{eq:T_half_quarter} and \ref{eq:T_sesqui_quarter}.\qed\end{proof}

\section{\label{sec:rotations}Spherical Rotations and Multiple Elliptic Integrals}
In this section, we shall focus on the transformations of multiple elliptic integrals using variable substitutions motivated by rotations on a unit sphere. In effect, our attention will be momentarily restricted to two  special Legendre functions $P_{-1/2} $ and $ P_{-1/4}$.

The transformation methods in this section not only provide evaluations of certain Cauchy principal values:\begin{align*}\mathcal P\int_{-1}^1\frac{ \mathbf K(\sqrt{(1-\xi)/2})\D\xi}{\pi(x-\xi)\sqrt{1+\xi}},\quad -1<x<1;\qquad\mathcal P\int_{-1}^1\mathbf K\left( \sqrt{\frac{1+\xi}{2}} \right)\mathbf K\left( \sqrt{\frac{1-\xi}{2}} \right)\frac{ 2\D \xi}{\pi (x-\xi)},\quad -1<x<1\quad\text{etc.}\end{align*}
which will be used in \S\ref{sec:Tricomi},
but also pave way for further applications in our  subsequent works on elliptic integrals.
\subsection{Beltrami Rotations}
 The technique of spherical rotation, which  traces back to Beltrami's work on the Abel integral equations \cite[Ref.][p.~328]{Beltrami1880}, has found applications in the geometrically-motivated evaluation of certain definite integrals related to Bessel functions (see \cite[Ref.][\S3.33, \S12.12 and \S12.14]{Watson1995Bessel}). The  lemma below is a simple realization of the Beltrami rotation in the case of  the complete elliptic integrals $ \mathbf K(k)$ and $ \mathbf E(k)$.\begin{lemma}[Beltrami Transformations]\label{lm:sph_rot}\begin{enumerate}[label=\emph{(\alph*)}, ref=(\alph*), widest=a]\item We have the following integral identities\begin{align}\mathbf K(k)={}&\frac{2}{\pi}\int_0^1\frac{\mathbf K(\sqrt{1-\kappa^2})\D \kappa}{1-k^2\kappa^2},& 0<k<1,\label{eq:Beltrami}\\\mathbf K(\xi)={}&\frac{2}{\pi}\int_0^1\frac{\sqrt{1-\xi^{2}}\mathbf K(\sqrt{1-\kappa^2})\D \kappa}{1-\xi^2(1-\kappa^2)},& 0<\xi<1,\label{eq:mB}\tag{\ref{eq:Beltrami}$'$}\\\mathbf K(r)={}&\frac{2}{\pi}\int_0^1\frac{(1+r)\mathbf K(\sqrt{1-\kappa^2})\D \kappa}{(1+r)^{2}-4r\kappa^2},& 0<r<1,\label{eq:LB}\tag{\ref{eq:Beltrami}$_L$}\\\mathbf K(\eta)={}&\frac{2}{\pi}\int_0^1\frac{(1-\eta)\mathbf K(\sqrt{1-\kappa^2})\D \kappa}{(1-\eta)^{2}+4\eta\kappa^2},& 0<\eta<1.\label{eq:mLB}\tag{\ref{eq:Beltrami}$'_L$}\end{align}\item For $ 0<k<1$, we have \begin{align}\frac{\mathbf K(k)-\mathbf E(k)}{k^{2}}={}&\frac{2}{\pi}\int_{0}^1\frac{\mathbf E(\sqrt{1-\kappa^{2}})\D\kappa}{1-k^{2}\kappa^{2}},\label{eq:KE_D_B}\\\mathbf E(k)={}&\frac{4(1-k^2)}{\pi}\int_{0}^1\frac{\mathbf E(\sqrt{1-\kappa^{2}})\D\kappa}{(1-k^{2}\kappa^{2})^2},\label{eq:EB}\\\frac{\mathbf E(k)}{\sqrt{1-k^{2}}}={}&\frac{4(1-k^2)}{\pi}\int_{0}^1\frac{\mathbf E(\sqrt{1-\kappa^{2}})\D\kappa}{[1-k^{2}(1-\kappa^{2})]^2}.\label{eq:EB'}\tag{\ref{eq:EB}$'$}\end{align} \end{enumerate}

\end{lemma}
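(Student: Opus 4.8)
The plan is to prove the lead identity Eq.~\ref{eq:Beltrami} by a genuine Beltrami rotation and then to harvest the remaining six identities from it by elementary modulus transformations plus one differentiation in the parameter. First I would translate Eq.~\ref{eq:Beltrami} into the language of $P_{-1/2}$. Setting $k=\sin(\Theta/2)$ and using from Eq.~\ref{eq:P_half_sin2} that $\frac{2}{\pi}\mathbf K(\sin(\Theta/2))=P_{-1/2}(\cos\Theta)$ together with $\frac{2}{\pi}\mathbf K(\sqrt{1-\kappa^2})=P_{-1/2}(2\kappa^2-1)$, the substitution $\kappa=\cos(\omega/2)$ recasts Eq.~\ref{eq:Beltrami} as
\begin{align*}
P_{-1/2}(\cos\Theta)=\frac{1}{\pi}\int_0^\pi\frac{P_{-1/2}(\cos\omega)\sin(\omega/2)}{1-\sin^2(\Theta/2)\cos^2(\omega/2)}\,\D\omega .
\end{align*}
Into this I would insert the Mehler--Dirichlet (Abel) representation Eq.~\ref{eq:MD_Pnu} at $\nu=-1/2$, namely $P_{-1/2}(\cos\omega)=\frac{2}{\pi}\int_0^\omega[2(\cos\beta-\cos\omega)]^{-1/2}\D\beta$, and then exchange the order of integration over the triangle $0\le\beta\le\omega\le\pi$. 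After the substitution $v=\cos\omega$ the inner $\omega$-integral becomes a rational integral $\int_{-1}^{\cos\beta}[(A-Bv)\sqrt{(1+v)(\cos\beta-v)}]^{-1}\D v$ with $A=1-\tfrac12\sin^2(\Theta/2)$ and $B=\tfrac12\sin^2(\Theta/2)$; the standard evaluation $\int_{r_1}^{r_2}[(A-Bv)\sqrt{(v-r_1)(r_2-v)}]^{-1}\D v=\pi[(A-Br_1)(A-Br_2)]^{-1/2}$ (valid since the pole stays outside $[-1,\cos\beta]$) collapses it to $\tfrac{\pi}{2}[1-\sin^2(\Theta/2)\cos^2(\beta/2)]^{-1/2}$. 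The surviving $\beta$-integral is then exactly $2\mathbf K(\sin(\Theta/2))$ after $\psi=\beta/2$ and the reflection $\psi\mapsto\pi/2-\psi$, reproducing $P_{-1/2}(\cos\Theta)$ and closing the argument.

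Each of Eqs.~\ref{eq:mB}, \ref{eq:LB}, \ref{eq:mLB} should then drop out of Eq.~\ref{eq:Beltrami} by a change of modulus applied to both sides. Writing the master as $\frac{\pi}{2}\mathbf K(k)=\int_0^1\mathbf K(\sqrt{1-\kappa^2})(1-k^2\kappa^2)^{-1}\D\kappa$, I would feed in the imaginary-modulus transformation $\mathbf K(i\lambda)=(1+\lambda^2)^{-1/2}\mathbf K(\lambda/\sqrt{1+\lambda^2})$ with $k^2=-\xi^2/(1-\xi^2)$, which turns $1-k^2\kappa^2$ into $[1-\xi^2(1-\kappa^2)]/(1-\xi^2)$ and the left side into $\sqrt{1-\xi^2}\,\mathbf K(\xi)$, giving precisely Eq.~\ref{eq:mB}. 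Equation~\ref{eq:LB} follows by taking $k=2\sqrt r/(1+r)$ and invoking Landen's transformation $\mathbf K(2\sqrt r/(1+r))=(1+r)\mathbf K(r)$, while Eq.~\ref{eq:mLB} comes from combining the imaginary-modulus step (with $a=2\sqrt\eta/(1-\eta)$, so that $1+a^2=[(1+\eta)/(1-\eta)]^2$) with the same Landen identity.

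For part (b), the lead second-kind identity Eq.~\ref{eq:KE_D_B} I would obtain by running the identical Abel/Beltrami machinery with $\mathbf E$ in place of $\mathbf K$; the relevant Legendre building block is now $P_{1/2}$, whose Mehler--Dirichlet kernel carries the extra factor $\cos\beta$ (since $\cos\frac{(2\nu+1)\beta}{2}=\cos\beta$ at $\nu=\tfrac12$), and the combination $\frac{2}{\pi}\mathbf E(\sin(\omega/2))=\frac12[P_{1/2}(\cos\omega)+P_{-1/2}(\cos\omega)]$ lets me recycle the already-proven $P_{-1/2}$ part and treat only the new $P_{1/2}$ contribution. Once Eq.~\ref{eq:KE_D_B} is established, differentiating it in $k$ produces the moment $\int_0^1\kappa^2\mathbf E(\sqrt{1-\kappa^2})(1-k^2\kappa^2)^{-2}\D\kappa$, which the algebraic reduction $\kappa^2=k^{-2}[1-(1-k^2\kappa^2)]$ expresses through the two moments with $(1-k^2\kappa^2)^{-1}$ and $(1-k^2\kappa^2)^{-2}$; solving the resulting first-order relation with the derivative formulas $\D\mathbf K/\D k$ and $\D\mathbf E/\D k$ from the proof of Lemma~\ref{lm:P_nu_spec} yields Eq.~\ref{eq:EB}, and Eq.~\ref{eq:EB'} then follows from Eq.~\ref{eq:EB} by the same imaginary-modulus substitution used to pass from Eq.~\ref{eq:Beltrami} to Eq.~\ref{eq:mB}.

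The delicate point is the second-kind master Eq.~\ref{eq:KE_D_B}: unlike $\mathbf K(\sqrt{1-\kappa^2})$, which is a single Legendre function, $\mathbf E(\sqrt{1-\kappa^2})$ is a mixture of $P_{1/2}$ and $P_{-1/2}$, and the $P_{1/2}$ Abel kernel weighted by $\cos\beta$ makes the inner $\omega$-integral no longer a pure $\mathbf K$-type rational integral. I expect the bookkeeping of this $\cos\beta$-weighted rational integral, and the correct matching with the $\mathbf K-\mathbf E$ numerator on the left of Eq.~\ref{eq:KE_D_B}, to be the step that needs the most care; the remainder is routine calculus together with the tabulated imaginary-modulus and Landen transformations.
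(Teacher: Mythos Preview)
Your argument is correct, but it takes a different route from the paper's for the two master identities. The paper proves Eq.~\ref{eq:Beltrami} by an honest geometric rotation: it writes $\mathbf K(k)=\int_{S^2}\frac{1}{\sqrt{1-Z^2}(1-kX)}\frac{\D\sigma}{4\pi}$, swaps the $X$- and $Z$-axes, symmetrizes over $\pm Z$ to get $\int_{S^2}\frac{1}{\sqrt{1-X^2}(1-k^2Z^2)}\frac{\D\sigma}{4\pi}$, and integrates over the azimuth to recover $\frac{1}{\pi}\int_0^\pi\frac{\mathbf K(\sin\theta)\sin\theta}{1-k^2\cos^2\theta}\D\theta$. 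Your Mehler--Dirichlet/Fubini scheme is the analytic shadow of this rotation: the inner rational integral you evaluate is exactly the azimuthal average after the rotation, and the surviving $\beta$-integral plays the r\^ole of the new colatitude integral. Both proofs are short; the paper's buys a transparent picture (and is the template for the Ramanujan rotations of \S\ref{subsec:Ramanujan}), while yours stays entirely within one-variable calculus and the standard integral $\int_{r_1}^{r_2}[(A-Bv)\sqrt{(v-r_1)(r_2-v)}]^{-1}\D v=\pi[(A-Br_1)(A-Br_2)]^{-1/2}$.

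Two remarks. First, your flagged ``delicate point'' in part~(b) is not actually delicate: the factor $\cos\beta$ in the $P_{1/2}$ Mehler--Dirichlet kernel depends only on $\beta$, so after Fubini it sits harmlessly outside the inner $\omega$-integral, which is then \emph{identical} to the one you already computed. The outer $\beta$-integral becomes $\int_0^\pi (1+\cos\beta)[1-k^2\cos^2(\beta/2)]^{-1/2}\D\beta$ (up to constants), and the substitution $\psi=\beta/2$ followed by $\psi\mapsto\frac\pi2-\psi$ gives $4\int_0^{\pi/2}\sin^2\psi\,(1-k^2\sin^2\psi)^{-1/2}\D\psi=4k^{-2}[\mathbf K(k)-\mathbf E(k)]$, matching the left side on the nose. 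Second, the paper's treatment of Eq.~\ref{eq:KE_D_B} is even leaner: it applies the very same $S^2$ rotation to $\frac{\mathbf K(k)-\mathbf E(k)}{k^2}=\int_{S^2}\frac{\sqrt{1-Z^2}}{1-kX}\frac{\D\sigma}{4\pi}$, swaps axes, and the azimuthal integral immediately produces $\mathbf E(\sin\theta)$. For the remaining five identities your derivations coincide with the paper's (imaginary modulus, Landen, and one $k$-differentiation).
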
\begin{proof}\begin{enumerate}[label=(\alph*), widest=a]\item\label{itm:Beltrami_a} We start by writing the complete elliptic integral of the first kind as an integral on the unit sphere $ S^2$: \begin{align*}\mathbf K(k)=\frac{1}{2}\int_0^\pi\frac{\D\theta}{\sqrt{1-k^2\sin^2\theta}}=\frac{1}{4\pi}\int_0^\pi\sin\theta\D\theta\int_0^{2\pi}\D\phi\frac{1}{\sin\theta(1-k\sin\theta\cos\phi)}=\int_{S^2}\frac{1}{\sqrt{1-Z^2}(1-kX)}\frac{\D\sigma}{4\pi}.\end{align*} As we rotate about the $ Y$-axis by a right angle, effectively swapping the r\^oles of  the $X$- and $Z$-axes, we may rewrite the  integral above as\begin{align*}\mathbf K(k)={}&\int_{S^2}\frac{1}{\sqrt{1-X^2}(1-kZ)}\frac{\D\sigma}{4\pi}=\int_{S^2}\frac{1}{\sqrt{1-X^2}(1-k^{2}Z^{2})}\frac{\D\sigma}{4\pi}=\frac{1}{4\pi}\int_0^\pi\sin\theta\D\theta\int_0^{2\pi}\D\phi\frac{1}{\sqrt{\smash[b]{1-\sin^{2}\theta\cos^{2}\phi}}(1-k^{2}\cos^{2}\theta)},\end{align*}where we have combined the values of the integrand at points  $ \pm Z$ and restored the spherical coordinates. Integrating over the azimuthal angle $ \phi$, we obtain\begin{align*}\mathbf K(k)=\frac{1}{\pi}\int_0^\pi\frac{\mathbf K(\sin\theta)\sin\theta\D\theta}{1-k^{2}\cos^{2}\theta},\end{align*}which is equivalent to Eq.~\ref{eq:Beltrami}.

  Now that we have proved the relation\begin{align*}\mathbf K(k)=\frac{2}{\pi}\int_0^1\frac{\mathbf K(\sqrt{1-\kappa^2})}{1-k^2\kappa^2}\D\kappa,\quad 0<k<1,\end{align*}which analytically continues to $ k\in\mathbb C\smallsetminus[1,+\infty)$, we may derive Eqs.~\ref{eq:mB}, \ref{eq:LB} and \ref{eq:mLB}, respectively,  from the imaginary modulus transformation $ \mathbf K(\xi)=\mathbf K(i\xi/\sqrt{\smash[b]{1-\xi^{2}}})/\sqrt{\smash[b]{1-\xi^{2}}}$, Landen's transformation $ \mathbf K(r)=\mathbf K(2\sqrt{r}/(1+r))/(1+r)$, and a combination thereof $ \mathbf K(\eta)=\mathbf K(2i\sqrt{\smash[b]{\eta}}/(1-\eta))/(1-\eta)$.

\item

We adapt our derivation of part~\ref{itm:Beltrami_a}  to accommodate for the complete elliptic integrals of the second kind $ \mathbf E(k),0<k<1$:\begin{align*}\frac{\mathbf K(k)-\mathbf E(k)}{k^{2}}={}&\frac{1}{2}\int_0^\pi\frac{\sin^2\theta\D\theta}{\sqrt{1-k^2\sin^2\theta}}=\frac{1}{4\pi}\int_0^\pi\sin\theta\D\theta\int_0^{2\pi}\D\phi\frac{\sin\theta}{1-k\sin\theta\cos\phi}=\int_{S^2}\frac{\sqrt{1-Z^2}}{1-kX}\frac{\D\sigma}{4\pi}\notag\\={}&\int_{S^2}\frac{\sqrt{1-X^2}}{1-kZ}\frac{\D\sigma}{4\pi}=\int_{S^2}\frac{\sqrt{1-X^2}}{1-k^{2}Z^{2}}\frac{\D\sigma}{4\pi}=\frac{1}{4\pi}\int_0^\pi\sin\theta\D\theta\int_0^{2\pi}\D\phi\frac{\sqrt{\smash[b]{1-\sin^{2}\theta\cos^{2}\phi}}}{1-k^{2}\cos^{2}\theta}=\frac{1}{\pi}\int_{0}^\pi\frac{\mathbf E(\sin\theta)\sin\theta\D\theta}{1-k^{2}\cos^{2}\theta},\end{align*}which verifies  Eq.~\ref{eq:KE_D_B}. As we multiply both sides of Eq.~\ref{eq:KE_D_B} by $ k^2$, and differentiate in $k$, we obtain the formula stated in Eq.~\ref{eq:EB}:\begin{align*}\frac{k\mathbf E(k)}{1-k^{2}}=\frac{\D }{\D k}[\mathbf K(k)-\mathbf E(k)]=\frac{2}{\pi}\frac{\D }{\D k}\int_{0}^1\frac{\mathbf E(\sqrt{1-\kappa^{2}})k^{2}\D\kappa}{1-k^{2}\kappa^{2}}=\frac{4k}{\pi}\int_{0}^1\frac{\mathbf E(\sqrt{1-\kappa^{2}})\D\kappa}{(1-k^{2}\kappa^{2})^2},\quad 0<k<1.\end{align*}Now, by analytic continuation, we have \begin{align}\mathbf E(k)=\frac{4(1-k^2)}{\pi}\int_{0}^1\frac{\mathbf E(\sqrt{1-\kappa^{2}})\D\kappa}{(1-k^{2}\kappa^{2})^2},\quad k\in\mathbb C\smallsetminus[1,+\infty).\tag{\ref{eq:EB}$^*$}\label{eq:EB_star}\end{align}In particular, by the imaginary modulus transformation for complete elliptic integrals of the second kind \cite[Ref.][item~160.02]{ByrdFriedman}, we obtain\begin{align*}\sqrt{\smash[b]{1+\xi^2}}\mathbf E\left( \frac{\xi}{\sqrt{1+\xi^2}} \right)=\mathbf E(i\xi)=\frac{4(1+\xi^2)}{\pi}\int_{0}^1\frac{\mathbf E(\sqrt{1-\kappa^{2}})\D\kappa}{(1+\xi^{2}\kappa^{2})^2},\quad \xi>0,\end{align*}  which is  equivalent to Eq.~\ref{eq:EB'}. \qed\end{enumerate}\end{proof}
\begin{remark}
Admittedly, the spherical rotation is not the only road to any or all of the identities in  Eqs.~\ref{eq:Beltrami}, \ref{eq:mB}, \ref{eq:LB} and \ref{eq:mLB}. For example, one may still verify Eq.~\ref{eq:Beltrami}   via the method of moments and hypergeometric summations \cite{Wan2012}. The same can be said for several formulae that we will encounter in Proposition~\ref{prop:Ramanujan}. However, it is our hope that the spherical rotations provide clearer geometric motivations than a purely combinatorial approach.

The author thanks an anonymous referee for pointing out a recent contribution of V. Anghel \cite{Anghel2013}, which generalizes the Beltrami rotation to hyperspheres embedded in Euclidean spaces of arbitrary dimensions. It is highly probable that the hyperspherical analog of Beltrami rotation can lead to geometric proofs of various integral identities related to elliptic integrals, which we hope to pursue in a separate publication. \eor\end{remark}

Beltrami transformations allow us to convert one multiple elliptic integral into another. For instance, the following chain of identities \begin{align*}\int_{0}^1\mathbf K(\xi)\D \xi=\int_0^1\frac{\mathbf K(\sqrt{1-\kappa^2})}{1+\kappa}\D\kappa=\int_0^1\frac{\mathbf K(\xi)\xi}{1+\sqrt{1-\xi^{2}}}\frac{\D\xi}{\sqrt{1-\xi^{2}}}=\frac{2}{\pi}\int_0^1\mathbf K\left(\sqrt{1-\kappa^2}\right)\left[ \frac{\kappa\arccos\kappa}{\sqrt{1-\kappa^2}}-\log(2\kappa) \right]\D\kappa\end{align*}result from two applications of Eq.~\ref{eq:mB}, and the transformations\begin{align*}\int_0^1\mathbf E(k)\D k=\frac{2}{\pi}\int_0^1\frac{\mathbf E(\sqrt{1-\kappa^2})}{\kappa^{3}}[-\kappa+(1+\kappa^{2})\tanh^{-1}\kappa] \D\kappa=\int_{0}^{1}\frac{(2+\kappa)\mathbf E(\sqrt{1-\kappa^2}) }{(1+\kappa)^{2}}\D \kappa\end{align*}  can be justified by  Eqs.~\ref{eq:EB} and \ref{eq:EB'}.
More examples will be described  elsewhere.

In the next corollary, we mention a few consequences of Beltrami transformations that are pertinent to later developments in the current work.\begin{corollary}[Some Applications of Beltrami Transformations]\label{cor:Beltrami_app}\begin{enumerate}[label=\emph{(\alph*)}, ref=(\alph*), widest=a]\item For $ 0<u<1$, we have the following duality relations for multiple elliptic integrals:\begin{align}\int_0^{\sqrt{\vphantom{1}u}}\frac{\mathbf K(k)\D k}{\sqrt{1-k^2}\sqrt{u-k^2}}={}&\int_0^1\frac{k\mathbf K(k)\D k}{\sqrt{1-k^2}\sqrt{1-k^2 u}},\label{eq:Q'_duality}\\\int^1_{\sqrt{\vphantom{1}u}}\frac{k\mathbf K(k)\D k}{\sqrt{1-k^2}\sqrt{k^2-u}}={}&\int_0^1\frac{\mathbf K(\sqrt{1-\kappa^2})\D\kappa}{\sqrt{1-\kappa^2}\sqrt{1-\kappa^2 u}}.\label{eq:P'_duality}\end{align}\item\label{itm:Tricomi_P_pm_half} We have the following Cauchy principal values for $ -1<x<1$:\begin{align}\mathcal P\int_{-1}^1\frac{ \mathbf K(\sqrt{(1-\xi)/2})}{\pi(x-\xi)\sqrt{1+\xi}}\D\xi={}&\frac{ \mathbf K(\sqrt{(1+x)/2})}{\sqrt{1+x}},\label{eq:P_minus_half_T}\\\mathcal P\int_{-1}^1\frac{ \mathbf K(\sqrt{(1-\xi)/2})-2 \mathbf E(\sqrt{(1-\xi)/2})}{\pi(x-\xi)\sqrt{1+\xi}}\D\xi={}&\frac{ 2\mathbf E(\sqrt{(1+x)/2})-\mathbf K(\sqrt{(1+x)/2})}{\sqrt{1+x}}.\label{eq:P_minus_half_T1}\end{align} \end{enumerate}\end{corollary}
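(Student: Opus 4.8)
The plan is to prove both parts by converting a single factor of $\mathbf K$ into its complementary-modulus Beltrami representation from Lemma~\ref{lm:sph_rot} and then interchanging the order of integration; the key technical choice is \emph{which} Beltrami variant to use, dictated by the weight attached to $\mathbf K$.

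For part (a) I would exploit the fact that the weight decides whether the inner integral becomes elementary. In Eq.~\ref{eq:Q'_duality} the left-hand integrand carries no factor $k$, so I would insert the imaginary-modulus form Eq.~\ref{eq:mB}, written as $\mathbf K(k)/\sqrt{1-k^2}=\tfrac2\pi\int_0^1\mathbf K(\sqrt{1-\kappa^2})\,\D\kappa/[1-k^2(1-\kappa^2)]$, so that the troublesome $\sqrt{1-k^2}$ cancels; after $k=\sqrt{u}\sin\psi$ the inner $k$-integral collapses to $\int_0^{\pi/2}\D\psi/[1-u(1-\kappa^2)\sin^2\psi]=\tfrac\pi2/\sqrt{1-u+u\kappa^2}$, exhibiting the left-hand side as $\int_0^1\mathbf K(\sqrt{1-\kappa^2})\,\D\kappa/\sqrt{1-u+u\kappa^2}$, which the substitution $\kappa=\sqrt{1-k^2}$ turns into the right-hand side verbatim. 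For Eq.~\ref{eq:P'_duality} the factor $k$ is present, so I would instead use the plain form Eq.~\ref{eq:Beltrami}; the substitution $k^2=u+(1-u)\sin^2\psi$ reduces the inner integral to $\int_0^{\pi/2}\D\psi/[1-u\kappa^2-(1-u)\kappa^2\sin^2\psi]=\tfrac\pi2/\sqrt{(1-u\kappa^2)(1-\kappa^2)}$, and the right-hand side of Eq.~\ref{eq:P'_duality} drops out immediately. Tonelli's theorem legitimises each interchange, since all integrands are positive for $0<u<1$.

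For part (b) I would first normalise the singular integrals. Writing $\xi=1-2k^2$ and $x=1-2y^2$ and recalling $\mathbf K(\sqrt{(1-\xi)/2})=\tfrac\pi2 P_{-1/2}(\xi)$ (Eq.~\ref{eq:P_half_sin2}) together with $Q_{-1/2}(x)=\tfrac\pi2 P_{-1/2}(-x)=\mathbf K(\sqrt{(1+x)/2})$ (Eq.~\ref{eq:def_Qnu}), the identity Eq.~\ref{eq:P_minus_half_T} becomes the cleaner principal value
\begin{align*}
\mathcal P\int_0^1\frac{k\,\mathbf K(k)\,\D k}{(k^2-y^2)\sqrt{1-k^2}}=\frac{\pi\,\mathbf K(\sqrt{1-y^2})}{2\sqrt{1-y^2}},\qquad 0<y<1.
\end{align*}
Into the left-hand side I would again substitute the imaginary-modulus form Eq.~\ref{eq:mB}, turning the kernel rational in $k^2$; interchanging the principal value with the $\kappa$-integration and setting $s=k^2$ leaves the elementary inner principal value $\tfrac12\mathcal P\int_0^1\D s/[(s-y^2)(1-(1-\kappa^2)s)]$, which evaluates to $\bigl[\log\tfrac{1-y^2}{y^2}-2\log\kappa\bigr]/[2(1-y^2+y^2\kappa^2)]$.

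What remains, and what I expect to be the main obstacle, is the \emph{reassembly} of these logarithmic contributions, namely showing
\begin{align*}
\frac1\pi\int_0^1\frac{\mathbf K(\sqrt{1-\kappa^2})}{1-y^2+y^2\kappa^2}\left[\log\frac{1-y^2}{y^2}-2\log\kappa\right]\D\kappa=\frac{\pi\,\mathbf K(\sqrt{1-y^2})}{2\sqrt{1-y^2}}.
\end{align*}
The constant-logarithm piece is immediate from the analytic continuation of Eq.~\ref{eq:Beltrami} to imaginary modulus, which gives $\int_0^1\mathbf K(\sqrt{1-\kappa^2})\,\D\kappa/(1-y^2+y^2\kappa^2)=\tfrac\pi2\mathbf K(y)/\sqrt{1-y^2}$; the delicate term is the $\log\kappa$-weighted integral, which I would generate by differentiating the Beltrami representation with respect to an auxiliary parameter and then collapse onto $\mathbf K(\sqrt{1-y^2})$ using the duality relations Eqs.~\ref{eq:Q'_duality}--\ref{eq:P'_duality} of part~(a). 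The justification of the Poincar\'e--Bertrand interchange of the principal value with the $\kappa$-integration is the other point requiring care. Finally, the companion identity Eq.~\ref{eq:P_minus_half_T1} follows by running the identical scheme with $\mathbf K$ replaced by the degree-$\tfrac12$ combination $2\mathbf E-\mathbf K$ and with the second-kind Beltrami transformations Eqs.~\ref{eq:KE_D_B}, \ref{eq:EB} and \ref{eq:EB'} in place of Eqs.~\ref{eq:Beltrami} and \ref{eq:mB}.
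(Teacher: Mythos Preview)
Your treatment of part~(a) is essentially identical to the paper's: both insert Eq.~\ref{eq:mB} for Eq.~\ref{eq:Q'_duality} and Eq.~\ref{eq:Beltrami} for Eq.~\ref{eq:P'_duality}, then swap the order of integration so that the inner $k$-integral is elementary.

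For part~(b), however, there is a genuine gap. Your route---inserting Eq.~\ref{eq:mB} \emph{inside} the Cauchy principal value and then interchanging---does produce the partial-fractions result
\[
\frac{1}{\pi}\int_0^1\frac{\mathbf K(\sqrt{1-\kappa^2})}{1-y^2+y^2\kappa^2}\left[\log\frac{1-y^2}{y^2}-2\log\kappa\right]\D\kappa,
\]
but the reassembly you flag as ``the main obstacle'' is never carried out. The hint to ``differentiate the Beltrami representation with respect to an auxiliary parameter'' does not lead anywhere obvious: differentiating Eq.~\ref{eq:Beltrami} or Eq.~\ref{eq:mB} in the modulus produces $\kappa^2/(1-z\kappa^2)^2$ kernels, not $\log\kappa$ weights, and the duality relations Eqs.~\ref{eq:Q'_duality}--\ref{eq:P'_duality} contain no logarithms either. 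So the identity you need,
\[
-\frac{2}{\pi}\int_0^1\frac{\mathbf K(\sqrt{1-\kappa^2})\log\kappa}{1-y^2+y^2\kappa^2}\,\D\kappa
=\frac{1}{\sqrt{1-y^2}}\left[\frac{\pi}{2}\mathbf K(\sqrt{1-y^2})-\frac{1}{2}\mathbf K(y)\log\frac{1-y^2}{y^2}\right],
\]
is left as an unproved lemma of roughly the same strength as the original claim.

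The paper avoids this trap entirely by going in the opposite direction: rather than inserting Beltrami into the principal value, it analytically continues the Beltrami identity $\mathbf K(k)=\tfrac{2}{\pi}\int_0^1\mathbf K(\sqrt{1-\kappa^2})\,\D\kappa/(1-k^2\kappa^2)$ from $k\in\mathbb C\smallsetminus[1,\infty)$ to boundary values $k\pm i0^+$ with $k>1$. Averaging the two boundary values turns the right-hand side into the desired Cauchy principal value automatically (Sokhotski--Plemelj), and the left-hand side is read off from the inverse modulus transformation $\mathbf K(\sqrt{z})=[\mathbf K(\sqrt{1/z})-i\mathbf K(\sqrt{(z-1)/z})]/\sqrt{z}$ for $z>1$. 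This yields Eq.~\ref{eq:P_minus_half_T} in one stroke, with no logarithms to reassemble. For Eq.~\ref{eq:P_minus_half_T1} the paper does not repeat the Beltrami scheme with $2\mathbf E-\mathbf K$ as you propose; instead it bootstraps from Eq.~\ref{eq:P_minus_half_T} via the standard Tricomi-transform moves (multiply the integrand by $1+\xi$, use $\int_0^1\mathbf K(\sqrt{1-\kappa^2})\,\D\kappa=\pi^2/4$, then differentiate under the principal value), which is both shorter and avoids any further analytic continuation.
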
\begin{proof}\begin{enumerate}[label=(\alph*), widest=a]\item With Eq.~\ref{eq:mB}, we may interchange the order of integrations to  compute \begin{align*}\int_0^{\sqrt{\vphantom{1}u}}\frac{\mathbf K(k)\D k}{\sqrt{1-k^2}\sqrt{u-k^2}}=\int_0^{\sqrt{\vphantom{1}u}}\left[ \frac{2}{\pi}\int_0^1 \frac{\sqrt{1-\xi^2}\mathbf K(\sqrt{1-\kappa^2})\D \kappa}{1-\xi^2(1-\kappa^2)}\right]\frac{ \D \xi}{\sqrt{1-\xi^2}\sqrt{u-\xi^2}}=\int_{0}^1\frac{\mathbf K(\sqrt{1-\kappa^2})\D \kappa}{\sqrt{1-u(1-\kappa^{2})}},\end{align*}where the last expression is equivalent to the right-hand side of Eq.~\ref{eq:Q'_duality}, by the correspondence $ \kappa\mapsto \sqrt{1-k^2}$. Similarly, we may prove Eq.~\ref{eq:P'_duality} using Eq.~\ref{eq:Beltrami}:\begin{align*}\int^1_{\sqrt{\vphantom{1}u}}\frac{k\mathbf K(k)\D k}{\sqrt{1-k^2}\sqrt{k^2-u}}=\int_0^{\sqrt{\vphantom{1}u}}\left[ \frac{2}{\pi}\int_0^1 \frac{\mathbf K(\sqrt{1-\kappa^2})\D \kappa}{1-k^{2}\kappa^2}\right]\frac{ k\D k}{\sqrt{1-k^2}\sqrt{k^2-u}}={}&\int_0^1\frac{\mathbf K(\sqrt{1-\kappa^2})\D\kappa}{\sqrt{1-\kappa^2}\sqrt{1-\kappa^2 u}}.\end{align*} \item\label{itm:Tricomi_P_half} For $ z>1$, one can  readily compute (cf. \cite[Ref.][p.~233]{SteinII})\begin{align} \mathbf K(\sqrt{z})={}&\int_{0}^1\frac{\D t}{\sqrt{1-t^2}\sqrt{1-z t^2}}=\int_{0}^{1/\sqrt{z}}\frac{\D t}{\sqrt{1-t^2}\sqrt{1-z t^2}}-i\int_{{1/\sqrt{z}}}^{1}\frac{\D t}{\sqrt{1-t^2}\sqrt{z t^2-1}}\notag\\={}&\frac{1}{\sqrt{z}}\int_{0}^{1}\frac{\D s}{\sqrt{1-\frac{1}{z}s^2}\sqrt{1-s^2}}-\frac{i}{\sqrt{z}}\int_{{1}}^{\sqrt{z}}\frac{\D s}{\sqrt{1-\frac{1}{z}s^2}\sqrt{s^2-1}}=\frac{\mathbf K(\sqrt{1/z})-i\mathbf K(\sqrt{(z-1)/z})}{\sqrt{z}}.\label{eq:inv_mod}\end{align}This is the inverse modulus transformation for complete elliptic integrals of the first kind.

Now, in the identity\begin{align*}\mathbf K(k)=\frac{2}{\pi}\int_0^1\frac{\mathbf K(\sqrt{1-\kappa^2})\D\kappa}{1-k^2\kappa^2},\quad \forall k\in\mathbb C\smallsetminus[1,+\infty),\end{align*}we approach the limit of $ k\pm i0^+\in[1,+\infty)$, read off the real part, and employ the inverse modulus transformation (Eq.~\ref{eq:inv_mod}), to obtain\begin{align*}\mathbf K(k)= \frac{k}{\pi}\mathcal P\int_{-1}^1\frac{\mathbf K(\sqrt{1-\kappa^{2}})\D\kappa}{k^{2}-\kappa^{2}},\quad 0<k<1,\end{align*}which can be reformulated into  Eq.~\ref{eq:P_minus_half_T} after the variable substitutions $ k=\sqrt{(1+x)/2}$, $\kappa=\sqrt{(1+\xi)/2} $.

From Eq.~\ref{eq:P_minus_half_T}, we may directly compute (cf.~\cite[Ref.][Eq.~11.215]{KingVol1})\begin{align*}\mathcal P\int_{-1}^1\frac{ \mathbf K(\sqrt{(1-\xi)/2})\sqrt{1+\xi}\D\xi}{\pi(x-\xi)}={}&\mathcal P\int_{-1}^1\frac{ \mathbf K(\sqrt{(1-\xi)/2})[1+(\xi-x)+x]\D\xi}{\pi(x-\xi)\sqrt{1+\xi}}\notag\\={}&(1+x)\mathcal P\int_{-1}^1\frac{ \mathbf K(\sqrt{(1-\xi)/2})\D\xi}{\pi(x-\xi)\sqrt{1+\xi}}-\frac{1}{\pi}\int_{-1}^1\frac{ \mathbf K(\sqrt{(1-\xi)/2})\D\xi}{\sqrt{1+\xi}}\notag\\={}&\sqrt{1+x}\mathbf K\left( \sqrt{\frac{1+x}{2}} \right)-\frac{1}{\pi}\int_{-1}^1\frac{ \mathbf K(\sqrt{(1-\xi)/2})\D\xi}{\sqrt{1+\xi}},\quad -1<x<1.\end{align*}Here, the last integral can be simplified with the knowledge of $ \int_0^1\mathbf K(\sqrt{1-\kappa^2})\D \kappa=\pi^2/4$ \cite[Ref.][item~6.141.2]{GradshteynRyzhik}, which brings us\begin{align*}\mathcal P\int_{-1}^1\frac{ \mathbf K(\sqrt{(1-\xi)/2})\sqrt{1+\xi}\D\xi}{\pi(x-\xi)}=\sqrt{1+x}\mathbf K\left( \sqrt{\frac{1+x}{2}} \right)-\frac{\pi}{\sqrt{2}},\quad -1<x<1.\end{align*}Applying integration by parts to the Tricomi transform (cf.~\cite[Ref.][Eqs.~11.218 and 11.219]{KingVol1}), one can use the equation above
 to deduce that \begin{align*}\mathcal P\int_{-1}^1\frac{1}{\pi(x-\xi)}\frac{\D}{\D \xi}\left[   \mathbf K\left( \sqrt{\frac{1-\xi}{2}} \right)\sqrt{1+\xi} \right]\D\xi=\frac{\D}{\D \xi}\left[\sqrt{1+x}\mathbf K\left( \sqrt{\frac{1+x}{2}} \right)\right]+\frac{1}{\sqrt{2}(x-1)},\quad -1<x<1.\end{align*}In other words, we have \begin{align*}\mathcal P\int_{-1}^1\frac{ \mathbf K(\sqrt{(1-\xi)/2})- \mathbf E(\sqrt{(1-\xi)/2})}{\pi(x-\xi)(1-\xi)\sqrt{1+\xi}}\D\xi={}&\frac{1}{1-x}\left[\frac{ \mathbf E(\sqrt{(1+x)/2})}{\sqrt{1+x}}-\frac{1}{\sqrt{2}}\right],\quad -1<x<1.\end{align*}Carrying this further, we obtain\begin{align*}&\mathcal P\int_{-1}^1\frac{ \mathbf K(\sqrt{(1-\xi)/2})- \mathbf E(\sqrt{(1-\xi)/2})}{\pi(x-\xi)\sqrt{1+\xi}}\D\xi=\frac{ \mathbf E(\sqrt{(1+x)/2})}{\sqrt{1+x}}-\frac{1}{\sqrt{2}}+\frac{1}{\pi}\int_{-1}^1\frac{ \mathbf K(\sqrt{(1-\xi)/2})- \mathbf E(\sqrt{(1-\xi)/2})}{(1-\xi)\sqrt{1+\xi}}\D\xi\notag\\={}&\frac{ \mathbf E(\sqrt{(1+x)/2})}{\sqrt{1+x}}-\frac{1}{\sqrt{2}}+\frac{1}{\pi}\int_{-1}^1\frac{\D}{\D \xi}\left[   \mathbf K\left( \sqrt{\frac{1-\xi}{2}} \right)\sqrt{1+\xi} \right]\D\xi=\frac{ \mathbf E(\sqrt{(1+x)/2})}{\sqrt{1+x}},\quad -1<x<1.\end{align*}We may combine the relation above with  Eq.~\ref{eq:P_minus_half_T} into a symmetric form\begin{align*}\mathcal P\int_{-1}^1\frac{ \mathbf K(\sqrt{(1-\xi)/2})-2 \mathbf E(\sqrt{(1-\xi)/2})}{\pi(x-\xi)\sqrt{1+\xi}}\D\xi=\frac{2\mathbf E(\sqrt{(1+x)/2})-\mathbf K(\sqrt{(1+x)/2})}{\sqrt{1+x}},\quad -1<x<1,\end{align*}as stated in Eq.~\ref{eq:P_minus_half_T1}.
 \qed\end{enumerate}\end{proof}
\subsection{Ramanujan Rotations\label{subsec:Ramanujan}}The next proposition is based on  reverse engineering of some formulae in Ramanujan's notebooks that were stated without proofs.\begin{proposition}[Ramanujan Transformations]\label{prop:Ramanujan}\begin{enumerate}[label=\emph{(\alph*)}, ref=(\alph*), widest=a]\item \label{itm:Ramanujan1} For $ s,t\in[0,1)$, we may represent $ \mathbf K(\sqrt{\vphantom{t}s})\mathbf K(\sqrt{t})$ and $ [\mathbf K(\sqrt{t})]^2$ as  integrals on the unit sphere $ S^2$:\begin{align}\mathbf K(\sqrt{\vphantom{t}s})\mathbf K(\sqrt{t})={}&\frac{1}{8}\int_{S^2}\frac{\D\sigma}{\sqrt{(1-sX^2)(1-tY^2)-Z^2}},\label{eq:K_prod_t1t2}\\
[\mathbf K(\sqrt{t})]^2={}&\int_{S^2}\frac{2(2-t)\mathbf K(\sqrt{X^{2}+Y^2})}{(2-t)^{2}-t^{2}X^{2}}\frac{\D\sigma}{4\pi}\label{eq:K_sqr}
\\={}&\int_{S^2}\frac{(1+t)\mathbf K(\sqrt{X^{2}+Y^2})}{(1+t)^{2}-4tX^{2}}\frac{\D\sigma}{4\pi},\tag{\ref{eq:K_sqr}$_L$}\label{eq:K_sqr_Landen}
\end{align}
where $ X=\sin\theta\cos\phi,Y=\sin\theta\sin\phi,Z=\cos\theta$ are the Cartesian coordinates on the unit sphere  $ S^2$.
Consequently, the following identities\ hold  for $ 0<u<1$:
\begin{align}\int_0^{\sqrt{\vphantom{1}u}}\frac{\mathbf K(k)\D k}{\sqrt{1-k^2}\sqrt{u-k^2}}={}&\int_{0}^1\frac{k \mathbf K(k)\D k}{\sqrt{1-k^2}\sqrt{1-k^2u}}=\frac{1}{1+\sqrt{\vphantom{1}u}}\left[\mathbf K\left( \sqrt{\frac{2\sqrt{\vphantom{1}u}}{1+\sqrt{\vphantom{1}u}}} \right) \right]^2,\label{eq:Q'u}\\\int_0^{\sqrt{\vphantom{1}1-u}}\frac{\mathbf K(k)\D k}{\sqrt{1-k^2}\sqrt{1-u-k^2}}={}&\int_{0}^1\frac{k \mathbf K(k)\D k}{\sqrt{1-k^2}\sqrt{1-k^2(1-u)}}=\frac{2}{1+\sqrt{\vphantom{1}u}}\left[\mathbf K\left( \sqrt{\frac{1-\sqrt{\vphantom{1}u}}{1+\sqrt{\vphantom{1}u}}} \right) \right]^2.\label{eq:Q'1u}\end{align}  \item\label{itm:Ramanujan2}
We have the integral identity\begin{align}\int_{S^2}\mathbf K\left(\sqrt{X^{2}+Y^2}\right)f(|X|){\frac{\D\sigma}{4\pi}}=\frac{2}{\pi}\int_0^1f(k)\mathbf K\left( \sqrt{\frac{1+k}{2}} \right)\mathbf K\left( \sqrt{\frac{1-k}{2}} \right)\D k\label{eq:S2_int_KK}\end{align}so long as the function  $f(|X|),0\leq|X|\leq1$   assumes non-negative values and the surface integral is convergent.
  This allows us to convert Eq.~\ref{eq:K_sqr} into the following identities for $0< t<1 $:
\begin{align}
[\mathbf K(\sqrt{t})]^2={}&\frac{2}{\pi}\int_0^1\frac{\mathbf K(\sqrt{\mathstrut\mu})\mathbf K(\sqrt{\mathstrut1-\mu})\D \mu}{1-\mu t}, \label{eq:K_sqr_star}\tag{\ref{eq:K_sqr}$^*$}\\{[\mathbf K(\sqrt{1-t})]^2}={}&\frac{8}{\pi}\int_0^1\frac{\mathbf K(\sqrt{\mathstrut\mu})\mathbf K(\sqrt{\mathstrut1-\mu})\D \mu}{(1+\sqrt{t})^{2}-\mu (1-\sqrt{t})^{2}},\label{eq:K_sqr_star_Landen}\tag{\ref{eq:K_sqr}$^*_L$}\end{align}and to deduce the following integral formulae:\footnote{Here, in Eq.~\ref{eq:KiK'}, we have chosen the univalent branches of the square roots such that $ \I \sqrt{(2t-1)^{2}-(1-\kappa^2)}\geq0$, and $ \mathbf K(\sqrt{1-t})\geq0,\mathbf K(\sqrt{t})\geq0$; in Eq.~\ref{eq:KiK''}, it is understood that $  \sqrt{(2t-1)^{2}-(1-\kappa^2)}\geq0$ on the left-hand side and \[\mathbf K(\sqrt{t})=\int_0^{\pi/2}\frac{\D\theta}{\sqrt{1+|t|\cos^2\theta}}\geq0,\quad \I\mathbf K(\sqrt{1-t})=\I\int_0^{\pi/2}\frac{\D\theta}{\sqrt{1-(1-t)\cos^2\theta}}\leq0,\quad\forall t<0.\]} \begin{align}\int_{0}^{1}\frac{\mathbf K(\sqrt{1-\kappa^2})}{\sqrt{(2t-1)^{2}-(1-\kappa^2)}}\D\kappa={}&\frac{[ \mathbf K( \sqrt{1-t} )-i\mathbf K( \sqrt{t}  )]^2}{2},\quad 0<t<\frac{1}{2};\label{eq:KiK'}\\
\int_{0}^{1}\frac{\mathbf K(\sqrt{1-\kappa^2})}{\sqrt{(2t-1)^{2}-(1-\kappa^2)}}\D\kappa={}&\frac{[ \mathbf K( \sqrt{1-t} )+i\mathbf K( \sqrt{t}  )]^2}{2},\quad t<0.\tag{\ref{eq:KiK'}$'$}\label{eq:KiK''}\end{align}

 Moreover, we have the  identity below for $ 0<\lambda\leq1/2$: \begin{align}
\int^1_{1-2\lambda}\frac{k\mathbf K(k)\D k}{\sqrt{1-k^{2}}\sqrt{k^2-(1-2\lambda)^{2}}}=\mathbf K(\sqrt{\lambda})\mathbf K(\sqrt{1-\lambda})={}&\int^1_{(1-\lambda)/(1+\lambda)}\frac{\mathbf K(k)\D k}{\sqrt{1-k^{2}}\sqrt{(1+\lambda)^{2}k^2-(1-\lambda)^2}},\label{eq:KK'_dual_forms}
\intertext{or equivalently,}\int_0^{\pi/2}\frac{\mathbf K(\sin\theta)\D\theta}{\sqrt{\vphantom{\sin^2\theta}1-(1-2\lambda)^2\cos^2\theta}}=\mathbf K(\sqrt{\lambda})\mathbf K(\sqrt{1-\lambda})={}&\int_0^{\pi/2}\D\theta\int_0^{\pi/2}\frac{\D\phi}{\sqrt{\smash[b]{(1+\lambda)^{2}-(1-\lambda)^2\sin^2\theta-4\lambda\sin^2\phi}}}.
\label{eq:KK'_dual_forms'}\tag{\ref{eq:KK'_dual_forms}$'$}
\end{align}\end{enumerate} \end{proposition}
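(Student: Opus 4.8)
The plan is to show that the five quantities occurring in Eqs.~\ref{eq:KK'_dual_forms} and \ref{eq:KK'_dual_forms'} --- the two single integrals of \ref{eq:KK'_dual_forms}, the product $\mathbf K(\sqrt\lambda)\mathbf K(\sqrt{1-\lambda})$, and the two displayed forms in \ref{eq:KK'_dual_forms'} --- all coincide, by first disposing of the purely formal equivalences and then isolating a single analytic identity as the crux. Among the five, two pairs are linked by elementary manipulations using lemmas already established, and what remains is the statement that the product equals either of the single-integral forms.

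First I would carry out the routine reductions. Performing the inner $\theta$-integration in the double integral on the right of \ref{eq:KK'_dual_forms'} produces $\int_0^{\pi/2}[(1+\lambda)^2-4\lambda\sin^2\phi]^{-1/2}\,\mathbf K\big((1-\lambda)[(1+\lambda)^2-4\lambda\sin^2\phi]^{-1/2}\big)\,\D\phi$, and the substitution $k=(1-\lambda)[(1+\lambda)^2-4\lambda\sin^2\phi]^{-1/2}$, whose range is exactly $[(1-\lambda)/(1+\lambda),1]$, converts this into the rightmost integral of \ref{eq:KK'_dual_forms}. On the other side, the Beltrami duality \ref{eq:P'_duality} with $u=(1-2\lambda)^2$, followed by the trigonometric substitution $\kappa=\cos\theta$, identifies the leftmost integral of \ref{eq:KK'_dual_forms} with the first displayed form of \ref{eq:KK'_dual_forms'}. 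These steps use only elementary changes of variable and an already-proven lemma, so the whole proposition collapses to the single claim that $\mathbf K(\sqrt\lambda)\mathbf K(\sqrt{1-\lambda})$ equals the planar double integral in \ref{eq:KK'_dual_forms'}.

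For this core identity I would work in the spirit of the present section, starting from the spherical product representation \ref{eq:K_prod_t1t2} with $s=\lambda$, $t=1-\lambda$. Eliminating $Z^2$ through $X^2+Y^2+Z^2=1$ turns the radicand there into $(1-\lambda)X^2+\lambda Y^2+\lambda(1-\lambda)X^2Y^2$; writing $X=\sin\Theta\cos\Phi$, $Y=\sin\Theta\sin\Phi$, $Z=\cos\Theta$, the surface element $\sin\Theta\,\D\Theta\,\D\Phi$ cancels the factor $\sin\Theta$ extracted from the radicand, so that after exploiting the evenness of the integrand one obtains $\mathbf K(\sqrt\lambda)\mathbf K(\sqrt{1-\lambda})=\int_0^{\pi/2}\!\int_0^{\pi/2}[(1-\lambda)\cos^2\Phi+\lambda\sin^2\Phi+\lambda(1-\lambda)\sin^2\Theta\cos^2\Phi\sin^2\Phi]^{-1/2}\,\D\Theta\,\D\Phi$. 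The task is then to match this with the target double integral $\int_0^{\pi/2}\!\int_0^{\pi/2}[(1-\lambda)^2\cos^2\theta+4\lambda\cos^2\phi]^{-1/2}\,\D\theta\,\D\phi$ by producing the explicit ``Ramanujan rotation'': the required change of variables is genuinely nonlinear, since the quartic coupling $\lambda(1-\lambda)\sin^2\Theta\cos^2\Phi\sin^2\Phi$ rules out any orthogonal substitution. Finding and justifying this map is the main obstacle.

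As a rigorous alternative that sidesteps guessing the rotation, I would prove instead that $\mathbf K(\sqrt\lambda)\mathbf K(\sqrt{1-\lambda})=G(\lambda)$, where $G(\lambda)$ denotes the first displayed form of \ref{eq:KK'_dual_forms'}, by appealing to the Stieltjes-type representation \ref{eq:K_sqr_star}. Substituting $G(\mu)$ into $\frac{2}{\pi}\int_0^1 G(\mu)(1-\mu t)^{-1}\D\mu$, interchanging the order of integration, and evaluating the inner integral $\int_0^1[(1-\mu t)\sqrt{1-(1-2\mu)^2\cos^2\theta}\,]^{-1}\D\mu$ in closed form --- it is elementary after $a=1-2\mu$, reducing to an integral of the type $\int\D\psi/(A+B\sin\psi)$ --- I would aim to recover $[\mathbf K(\sqrt t)]^2$ with the help of \ref{eq:LegendreP_half_spec}. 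Since \ref{eq:K_sqr_star} also gives $[\mathbf K(\sqrt t)]^2=\frac{2}{\pi}\int_0^1 \mathbf K(\sqrt\mu)\mathbf K(\sqrt{1-\mu})(1-\mu t)^{-1}\D\mu$, comparing the two representations and matching all powers of $t$ forces $G(\mu)=\mathbf K(\sqrt\mu)\mathbf K(\sqrt{1-\mu})$ for $\mu\in(0,1)$, by uniqueness of the moment (Stieltjes) transform. On this route the laborious point is the evaluation of the resulting double integral over $(\theta,\mu)$ and its identification with $[\mathbf K(\sqrt t)]^2$, the injectivity step being immediate. Finally, Landen's transformation relates the two single-integral forms to one another, so either route, combined with the two routine reductions above, establishes all the equalities asserted in \ref{eq:KK'_dual_forms} and \ref{eq:KK'_dual_forms'}.
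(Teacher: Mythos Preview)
Your routine reductions are fine and match the paper: the substitution linking the double integral in \ref{eq:KK'_dual_forms'} to the rightmost single integral of \ref{eq:KK'_dual_forms}, and the use of \ref{eq:P'_duality} to link the leftmost single integral of \ref{eq:KK'_dual_forms} to the first form of \ref{eq:KK'_dual_forms'}, are exactly what the paper does. The problem lies entirely in your ``core identity''.

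Neither of your two proposed routes actually closes the gap. In the first you yourself concede that the needed nonlinear change of variables is ``the main obstacle'' and stop. In the second you assume \ref{eq:K_sqr_star} and then assert that $\tfrac{2}{\pi}\int_0^1 G(\mu)(1-\mu t)^{-1}\,\D\mu$ will turn out to equal $[\mathbf K(\sqrt t)]^2$; but you give no argument for this, and a quick check shows it is \emph{not} a pointwise-in-$\kappa$ identity after you swap the order of integration (at $t=0$ the inner $\mu$-integral gives $\tfrac{2\arcsin\kappa}{\pi\kappa\sqrt{1-\kappa^2}}\neq 1$). So what remains is exactly as hard as the original problem, and you have only restated it. There is also a logical hazard: within the paper, \ref{eq:K_sqr_star} is obtained via \ref{eq:S2_int_KK}, whose proof already uses the very identity you are trying to establish (the leftmost equality of \ref{eq:KK'_dual_forms}); unless you supply an independent proof of \ref{eq:K_sqr_star}, your alternative is circular.

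The paper's argument is different and much shorter. It does not hunt for a rotation matching the two planar integrals; instead it analytically continues the formula $[\mathbf K(\sqrt z)]^2=\int_0^1\mathbf K(\sqrt{1-\kappa^2})\,[1-z+\tfrac{z^2}{4}\kappa^2]^{-1/2}\D\kappa$ (Eq.~\ref{eq:K_sqr}$'$, already proved in part~(a)) to $z=2/(1+k)>1$, applies the inverse-modulus transformation \ref{eq:inv_mod} on the right, and reads off the imaginary part. That single step yields $\int_{1-2\lambda}^1 k\mathbf K(k)[{(1-k^2)(k^2-(1-2\lambda)^2)}]^{-1/2}\D k=\mathbf K(\sqrt\lambda)\mathbf K(\sqrt{1-\lambda})$ directly, which is the leftmost equality of \ref{eq:KK'_dual_forms}. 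The rightmost equality is then obtained by a second analytic continuation of the (now established) first form of \ref{eq:KK'_dual_forms'}, followed by the imaginary-modulus, inverse-modulus and Landen transformations. The moral is that the ``Ramanujan rotation'' you were searching for is replaced by an analytic continuation in the modulus.

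Finally, note that your proposal addresses only \ref{eq:KK'_dual_forms}--\ref{eq:KK'_dual_forms'}; the statement you were given is the full Proposition, including the spherical representations \ref{eq:K_prod_t1t2}--\ref{eq:K_sqr_Landen}, the consequences \ref{eq:Q'u}--\ref{eq:Q'1u}, and the identities \ref{eq:S2_int_KK}--\ref{eq:KiK''}, none of which you discuss.
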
\begin{proof}\begin{enumerate}[label=(\alph*),widest=a]\item
Writing $ \D\sigma=\sin\theta\D\theta\D\phi$ for the surface element on the unit sphere, we have\begin{align*}\mathbf{K}(\sqrt{s})\mathbf{K}(\sqrt{t})=\frac{1}{8}\int_0^{\pi}\frac{\D\theta}{\sqrt{1-s\cos^2\theta}}\int_0^{2\pi}\frac{\D\phi}{\sqrt{\smash[b]{1-t\cos^2\phi}}}=\frac{1}{8}\int_{S^{2}}\frac{\D\sigma}{\sqrt{1-sZ^{2}}\sqrt{1-Z^2-tX^2}}\end{align*}where the Cartesian coordinates $ X=\sin\theta\cos\phi,Z=\cos\theta$ were used in the last step. As  in Lemma~\ref{lm:sph_rot}, we interchange the r\^oles of  the $X$- and $Z$-axes,    to deduce\begin{align*}\mathbf{K}(\sqrt{\vphantom{t}s})\mathbf{K}(\sqrt{t})={}&\frac{1}{8}\int_{S^{2}}\frac{\D\sigma}{\sqrt{1-sX^{2}}\sqrt{1-X^2-tZ^2}}=\int_0^{\pi/2}\D\theta\int_0^{\pi/2}\D\phi\frac{\sin\theta}{\sqrt{\smash[b]{1-s\sin^{2}\theta\cos^{2}\phi}}\sqrt{\smash[b]{1-\sin^{2}\theta\cos^{2}\phi-t\cos^2\theta}}}\notag\\={}&\int_0^{\pi/2}\frac{\sin\theta\D \theta}{\sqrt{1-t\cos^2\theta}}\int_0^{\pi/2}\frac{\D\phi}{\sqrt{1-s\sin^{2}\theta\cos^{2}\phi}\sqrt{1-\frac{\sin^{2}\theta}{1-t\cos^2\theta}\cos^{2}\phi}}.\end{align*}For any  two numbers $ a,b\in(0,1)$,  we can verify the identity\begin{align}\int_0^{\pi/2}\frac{\D\phi}{\sqrt{1-a\cos^{2}\phi}\sqrt{1-b\cos^{2}\phi}}=\int_0^{\pi/2}\frac{\D\psi}{\sqrt{1-a-(b-a)\cos^{2}\psi}}\label{eq:sqr_comb}\end{align}with a simple substitution $ \phi=\arctan(\sqrt{1-a}\tan\psi)$.  Now, setting $ a=s\sin^{2}\theta,b=\frac{\sin^{2}\theta}{1-t\cos^2\theta}$, we are led to the formula\begin{align}\mathbf{K}(\sqrt{\vphantom{t}s})\mathbf{K}(\sqrt{t})={}&\int_0^{\pi/2}\sin\theta\D \theta\int_0^{\pi/2}\frac{ \D\phi}{\sqrt{\smash[b]{(1-s)(1-\sin^{2}\theta \cos^{2}\phi)+(s-t)\cos^2\theta+st\cos^2\theta\sin^{2}\theta \sin^{2}\phi}}}\notag\\={}&\frac{1}{8}\int_{S^{2}}\frac{\D\sigma}{\sqrt{(1-s)(1-X^{2})+(s-t)Z^{2}+stY^2 Z^2}}=\frac{1}{8}\int_{S^{2}}\frac{\D\sigma}{\sqrt{(1-sY^{2})(1-tZ^{2})-X^{2}}}\tag{\ref{eq:K_prod_t1t2}$'$}\label{eq:K_prod_t1t2'}\end{align}after a literal transformation from the spherical coordinates to Cartesian coordinates, and an algebraic simplification according to the spherical constraint $X^2+Y^2+Z^2=1$. Clearly, the last term in Eq.~\ref{eq:K_prod_t1t2'} is equivalent to the right-hand side of Eq.~\ref{eq:K_prod_t1t2}, as the integral in question remains invariant under a cyclic shift of variables $ (X,Y,Z)\mapsto(Z,X,Y)$.

When $ s=t$, we may swap  the $X$- and $Z$-axes in the penultimate term of Eq.~\ref{eq:K_prod_t1t2'}, to derive\begin{align*}[\mathbf{K}(\sqrt{t})]^2={}&\frac{1}{8}\int_{S^{2}}\frac{\D\sigma}{\sqrt{\smash[b]{(1-t)(1-Z^{2})+t^2Y^2 X^2}}}=\int_0^{\pi/2}\D \theta\int_0^{\pi/2}\D\phi\frac{ 1}{\sqrt{\smash[b]{1-t+t^{2}\sin^2\theta \sin^{2}\phi\cos^{2}\phi}}}\notag\\={}&\frac12\int_0^{\pi/2}\D \theta\int_0^{\pi}\D\phi\frac{ 1}{\sqrt{{1-t+\frac{t^{2}}{4}\sin^{2}\theta\sin^2\phi }}}=\frac{1}{8}\int_{S^{2}}\frac{\D\sigma}{\sqrt{(1-Z^2)\left(1-t+\frac{t^2}{4}Y^2\right) }}.\end{align*}Switching the $ Y$- and $Z$-axes, we then arrive at
\begin{align*}[\mathbf{K}(\sqrt{t})]^2
={}&\frac{1}{8}\int_{S^{2}}\frac{\D\sigma}{\sqrt{(1-Y^2)\left(1-t+\frac{t^2}{4}Z^2\right) }}
=\int_0^{\pi/2}\sin\theta\D \theta\int_0^{\pi/2}\D\phi\frac{ 1}{\sqrt{1-\sin^{2}\theta\sin^2\phi}\sqrt{1-t+\frac{t^2}{4}\cos^{2}\theta}}\notag\\
={}&\int_0^{\pi/2}\frac{ \mathbf K(\sin\theta)\sin\theta\D \theta}{\sqrt{1-t+\frac{t^2}{4}\cos^{2}\theta}}
=\int_0^\pi\frac{ \mathbf K(\sin\theta)\sin\theta\D \theta}{\sqrt{(2-t)^{2}-t^{2}\sin^{2}\theta}}.\end{align*}Judging from the familiar integral formula\[\int_0^{2\pi}\frac{\D\phi}{2-t+t\sin\theta\cos\phi}=\frac{2\pi}{\sqrt{(2-t)^{2}-t^{2}\sin^{2}\theta}},\]it is evident that
\[[\mathbf{K}(\sqrt{t})]^2
=\frac{1}{4\pi}\int_0^{2\pi}\D\phi\int_0^{\pi}\frac{ 2\mathbf K(\sin\theta)\sin\theta\D \theta}{2-t+t\sin\theta\cos\phi}
=\frac{1}{4\pi}\int_{S^2}\frac{2\mathbf K(\sqrt{X^{2}+Y^2})\D\sigma}{2-t+tX}.\]Pairing up the values of the integrand at $ \pm X$, we obtain
\begin{align*}[\mathbf{K}(\sqrt{t})]^2={}&\frac{1}{4\pi}\int_{S^2}\frac{2\mathbf K(\sqrt{X^{2}+Y^2})\D\sigma}{2-t+tX}=\frac{2-t}{4\pi}\int_{S^2}\frac{2\mathbf K(\sqrt{X^{2}+Y^2})\D\sigma}{(2-t)^{2}-t^{2}X^{2}},\end{align*}
as claimed in  Eq.~\ref{eq:K_sqr}.
Then, Landen's transformation\begin{align}\mathbf K(\sqrt{t})=\frac{1}{1+\sqrt{t}}\mathbf K\left( \frac{2\sqrt[4]{t}}{1+\sqrt{t}} \right),\quad 0\leq t<1\label{eq:LandenL}\end{align}brings us to  Eq.~\ref{eq:K_sqr_Landen}.

On account of Eqs.~\ref{eq:K_sqr} and \ref{eq:K_sqr_Landen}, we may put down the relations\begin{align*}\left[ \mathbf K\left( \sqrt{\frac{2\sqrt{\vphantom{1}u}}{1+\sqrt{\vphantom{1}u}}} \right) \right]^2={}&(1+\sqrt{\vphantom{1}u})\int_{S^2}\frac{\mathbf K(\sqrt{X^2+Y^2})}{1-uX^2}\frac{\D \sigma}{4\pi}=(1+\sqrt{\vphantom{1}u})\int_{0}^1\frac{k \mathbf K(k)\D k}{\sqrt{1-k^2}\sqrt{1-k^2u}},\notag\\\left[ \mathbf K\left( \sqrt{\frac{1-\sqrt{\vphantom{1}u}}{1+\sqrt{\vphantom{1}u}}} \right) \right]^2={}&\frac{1+\sqrt{\vphantom{1}u}}{2}\int_{S^2}\frac{\mathbf K(\sqrt{X^2+Y^2})}{1-(1-u)X^2}\frac{\D \sigma}{4\pi}=\frac{1+\sqrt{\vphantom{1}u}}{2} \int_{0}^1\frac{k \mathbf K(k)\D k}{\sqrt{1-k^2}\sqrt{1-k^2(1-u)}},\end{align*}which confirm, respectively, the right halves of  Eqs.~\ref{eq:Q'u} and \ref{eq:Q'1u}. The left halves of  Eqs.~\ref{eq:Q'u} and \ref{eq:Q'1u} follow from the duality relation in  Eq.~\ref{eq:Q'_duality}.

One may wish to compare  Eqs.~\ref{eq:K_sqr} and \ref{eq:K_sqr_Landen} (applicable to $ 0<u<1$) to the Hobson coupling formula in Eq.~\ref{eq:LegendreP_quarter_spec} (confined to $ 0\leq u\leq 1/2$).\item We start with a spherical rotation\begin{align*}&\int_{S^2}\mathbf K\left(\sqrt{X^{2}+Y^2}\right)f(|X|){\frac{\D\sigma}{4\pi}}=\int_{S^2}\mathbf K\left(\sqrt{Z^{2}+Y^2}\right)f(|Z|){\frac{\D\sigma}{4\pi}}\notag\\={}&\frac{2}{\pi}\int_0^1\D Z\int_{0}^{\pi/2}\D\phi\,\mathbf K\left(\sqrt{Z^{2}+(1-Z^{2}){\sin^2 \phi}}\right)f(|Z|)=\frac{2}{\pi}\int_0^1\D k\int_{0}^{\pi/2}\D\phi\,\mathbf K\left(\sqrt{k^{2}\cos^2{\phi}+\sin^2 {\phi}}\right)f(k)\notag\\={}&\frac{2}{\pi}\int_0^1f(k)\left[ \int_{0}^{\sqrt{1-k^2}}\frac{\mathbf K(\sqrt{1-\kappa^2})}{\sqrt{1-k^{2}-\kappa^2}}\D\kappa\right]\D k=\frac{2}{\pi}\R\int_0^1f(k)\left[ \int_{0}^{1}\frac{\mathbf K(\sqrt{1-\kappa^2})}{\sqrt{1-k^{2}-\kappa^2}}\D\kappa\right]\D k,\end{align*}where we have made the substitution $ \kappa=\sqrt{1-k^2}\cos\phi$  in the last line. Then, we may evaluate\begin{align*}\R\left[ \int_{0}^{1}\frac{\mathbf K(\sqrt{1-\kappa^2})}{\sqrt{1-k^{2}-\kappa^2}}\D\kappa\right]=-\I\left[ \int_{0}^{1}\frac{\mathbf K(\sqrt{1-\kappa^2})}{\sqrt{k^{2}-1+\kappa^2}}\D\kappa\right]\end{align*}by analytic continuation. From  Eq.~\ref{eq:K_sqr}, we have\begin{align*} \int_{0}^{1}\frac{\mathbf K(\sqrt{1-\kappa^2})}{\sqrt{1-z+\frac{z^{2}}{4}\kappa^2}}\D\kappa=\left[ \int_{0}^1\frac{\D t}{\sqrt{1-t^2}\sqrt{1-z t^2}} \right]^2,\quad 0\leq z<1.\end{align*}   After setting $ z=2/(1+k)\geq1$ in the inverse modulus transformation formula (Eq.~\ref{eq:inv_mod}), we obtain\begin{align}(1+k)\int_{0}^{1}\frac{\mathbf K(\sqrt{1-\kappa^2})}{\sqrt{k^{2}-1+\kappa^2}}\D\kappa=\frac{1+k}{2}\left[ \mathbf K\left( \sqrt{\frac{1+k}{2}} \right)-i\mathbf K\left( \sqrt{\frac{1-k}{2}} \right )\right]^2,\quad 0\leq k<1.\label{eq:KiK}\end{align} Reading off the imaginary parts on both sides of the formula above, we arrive at the following equation for $ \lambda=(1-k)/2\in(0,1/2]$:
\begin{align}
\int^1_{1-2\lambda}\frac{k\mathbf K(k)\D k}{\sqrt{1-k^{2}\vphantom{)^2}}\sqrt{k^2-(1-2\lambda)^{2}}}=\mathbf K(\sqrt{\lambda})\mathbf K(\sqrt{1-\lambda}),\label{eq:KK'_usual}
\end{align}as well as the identity claimed in Eq.~\ref{eq:S2_int_KK}.

Applying  Eq.~\ref{eq:S2_int_KK} to  Eq.~\ref{eq:K_sqr}, we obtain the result stated in Eq.~\ref{eq:K_sqr_star}:\begin{align*}[\mathbf K(\sqrt{t})]^2={}&\int_{S^2}\frac{2(2-t)\mathbf K(\sqrt{X^{2}+Y^2})}{(2-t)^{2}-t^{2}X^{2}}{\frac{\D\sigma}{4\pi}}\notag\\={}&\frac{2}{\pi}\left[\int_0^1\frac{1}{2-t+kt}\mathbf K\left( \sqrt{\frac{1+k}{2}} \right)\mathbf K\left( \sqrt{\frac{1-k}{2}} \right)\D k+\int_0^1\frac{1}{2-t-k't}\mathbf K\left( \sqrt{\frac{1+k'}{2}} \right)\mathbf K\left( \sqrt{\frac{1-k'}{2}} \right)\D k'\right]\notag\\={}&\frac{2}{\pi}\left[\int_0^{1/\sqrt{2}}\frac{2\mathbf K(\sqrt{\mathstrut\mu})\mathbf K(\sqrt{\mathstrut1-\mu})}{2-t+(1-2\mu)t}\D\mu+\int^1_{1/\sqrt{2}}\frac{2\mathbf K(\sqrt{\mathstrut\mu'})\mathbf K(\sqrt{\mathstrut1-\mu'})}{2-t-(2\mu'-1)t}\D\mu'\right]=\frac{2}{\pi}\int_0^1\frac{\mathbf K(\sqrt{\mathstrut\mu})\mathbf K(\sqrt{\mathstrut1-\mu})\D \mu}{1-\mu t}.\end{align*} (An equivalent form of  Eq.~\ref{eq:K_sqr_star} has appeared as  Eq.~26 in~\cite{Wan2012}, which was derived combinatorially using generalized hypergeometric series, instead of the geometric interpretation given here.)
One may  derive Eq.~\ref{eq:K_sqr_star_Landen} from Eq.~\ref{eq:K_sqr_star} and Landen's transformation\begin{align*}\mathbf K(\sqrt{1-t})=\frac{2}{1+\sqrt{t}}\mathbf K\left( \frac{1-\sqrt{t}}{1+\sqrt{t}} \right).\end{align*}

  Writing $ t=(1-k)/2\in(0,1/2)$, we may  recast Eq.~\ref{eq:KiK} into Eq.~\ref{eq:KiK'}. By  analytically continuing   Eq.~\ref{eq:KiK'} to negative valued  $t<0$ with the aid of the integral representation of the elliptic integral $ \mathbf K$ for complex-valued modulus, we arrive at Eq.~\ref{eq:KiK''}. Here, the change in the sign from   $ -i\mathbf K( \sqrt{t}  )$ in Eq.~\ref{eq:KiK'} to $ +i\mathbf K( \sqrt{t}  )$ in Eq.~\ref{eq:KiK''} is worthy of special attention. It is ready to appreciate that such a sign change ensures compatibility with the natural choices of the univalent branches of the square roots occurring in all the places.

The leftmost equality in Eq.~\ref{eq:KK'_dual_forms}  has been already proved in Eq.~\ref{eq:KK'_usual}, and its counterpart in Eq.~\ref{eq:KK'_dual_forms'} follows from Eq.~\ref{eq:P'_duality}. To demonstrate the rightmost equality in Eq.~\ref{eq:KK'_dual_forms}, we perform the following computations for $ 0<\lambda\leq1/2$: \begin{align*}\int^1_{(1-\lambda)/(1+\lambda)}\frac{\mathbf K(k)\D k}{\sqrt{1-k^{2}}\sqrt{(1+\lambda)^{2}k^2-(1-\lambda)^2}}={}&\R\int^{\pi/2}_{0}\frac{\mathbf K(\sin\theta)\D \theta}{\sqrt{4\lambda-(1+\lambda)^{2}\cos^2\theta}}=\frac{1}{2\sqrt{\lambda}}\R\left[ \mathbf K\left (\frac{i(1-\sqrt{\lambda})}{2\sqrt[4]\lambda} \right)\mathbf K\left( \frac{(1+\sqrt{\lambda})}{2\sqrt[4]\lambda} \right) \right]\end{align*}by using an analytic continuation of the leftmost term of Eq.~\ref{eq:KK'_dual_forms'} in the last step. According to the imaginary modulus transformation and Landen's transformation, we have\begin{align*}\mathbf K\left (\frac{i(1-\sqrt{\lambda})}{2\sqrt[4]\lambda} \right)=\frac{2\sqrt[4]\lambda}{1+\sqrt{\lambda}}\mathbf K\left( \frac{1-\sqrt{\lambda}}{1+\sqrt{\lambda}} \right)=\sqrt[4]\lambda\mathbf K(\sqrt{1-\lambda});\end{align*} whereas the inverse modulus transformation  (Eq.~\ref{eq:inv_mod})  and Landen's transformation lead us to \begin{align*}\R\left[\mathbf K\left( \frac{(1+\sqrt{\lambda})}{2\sqrt[4]\lambda} \right) \right]=\frac{2\sqrt[4]\lambda}{1+\sqrt{\lambda}}\mathbf K\left( \frac{2\sqrt[4]\lambda}{1+\sqrt{\lambda}} \right)=2\sqrt[4]\lambda\mathbf K(\sqrt{\lambda}).\end{align*}Thus, the rightmost equality in Eq.~\ref{eq:KK'_dual_forms} is verified.  The connection to its counterpart in  Eq.~\ref{eq:KK'_dual_forms'} can be proved as follows:
\begin{align*}&\int_{\sqrt{\vphantom{1}u}}^1\frac{\mathbf K(k)\D k}{\sqrt{1-k^2}\sqrt{k^2-u}}=\int_0^{\pi/2}\frac{1}{\sqrt{1-(1-u)\sin^2\theta}}\mathbf K\left( \sqrt{\frac{u}{1-(1-u)\sin^2\theta}} \right)\D\theta\notag\\={}&\int_0^{\pi/2}\D\theta\int_0^{\pi/2}\frac{\D\phi}{\sqrt{\smash[b]{1-(1-u)\sin^2\theta-u\sin^2\phi}}}.\end{align*}Here,  we have used the variable substitution $ k=\sqrt{\smash[b]{u/[1-(1-u)\sin^2\theta]}}$ to complete the  first line in the equation above, before spelling out $ \mathbf K(\sqrt{u/[1-(1-u)\sin^2\theta]})$ as an integral in $ \phi$ in the second line.

    The last equality in  Eq.~\ref{eq:KK'_dual_forms'} implies the following evaluation for $0<u<1$:
\begin{align}\int_0^{\pi/2}\D\theta\int_0^{\pi/2}\frac{\D\phi}{\sqrt{\smash[b]{1-u\sin^2\theta-(1-u)\sin^2\phi}}}={\frac{2}{1+\sqrt{\vphantom{1}u}}}\mathbf K\left( \sqrt{\frac{1-\sqrt{\vphantom{1}u}}{1+\sqrt{\vphantom{1}u}}} \right)\mathbf K\left( \sqrt{\frac{2\sqrt{\vphantom{1}u}}{1+\sqrt{\vphantom{1}u}}} \right),\label{eq:KK'_dual_forms'_star}\tag{\ref{eq:KK'_dual_forms'}$^*$}\end{align}   so an interchange of the angular variables $ \theta$ and $\phi$ brings us back to the $ u\leftrightarrow1-u$ symmetry displayed in Eq.~\ref{eq:PuP1_u_symm}.

One may also wish to compare the formula\begin{align}\frac{2}{1+\sqrt{\vphantom{1}u}}\mathbf K\left( \sqrt{\frac{2\sqrt{\vphantom{1}u}}{1+\sqrt{\vphantom{1}u}}} \right)\mathbf K\left( \sqrt{\frac{1-\sqrt{\vphantom{1}u}}{1+\sqrt{\vphantom{1}u}}} \right)=\int_{\sqrt{\vphantom{1}u}}^1\frac{\mathbf K(k)\D k}{\sqrt{1-k^2}\sqrt{k^2-u}},\quad 0<u<1\end{align}to the corresponding result from Hobson coupling (Eq.~\ref{eq:LegendreP_quarter_spec'}).     \qed\end{enumerate}\end{proof}\begin{remark}\begin{enumerate}[label=(\arabic*)]
\item
Up to a variable substitution, our intermediate result in part (a)   \begin{align}[\mathbf K(\sqrt{t})]^2=\int_0^{\pi/2}\frac{ \mathbf K(\sin\theta)\sin\theta\D \theta}{\sqrt{1-t+\frac{t^2}{4}\cos^{2}\theta}}=\int_0^{1}\frac{ \mathbf K(\sqrt{1-\kappa^{2}})\D \kappa}{\sqrt{1-t+\frac{t^2}{4}\kappa^{2}}},\quad 0\leq t<1\tag{\ref{eq:K_sqr}$'$}\label{eq:K_sqr'}\end{align}is related to  formula   13.0.18 in \cite{Apelblat}, and item 2.16.5.7 of \cite{PBMVol3}. However, it is highly probable that the ultimate source of this formula is  Ramanujan.

Entry~7(x) in Chapter 17 of Ramanujan's second notebook essentially reads \[\int_0^{\pi/2}\int_0^{\pi/2}\frac{\D\theta\D\phi}{\sqrt{\smash[b]{(1-u\sin^2\theta)(1-u\sin^2\theta\sin^2\phi)}}}=\left( \int_0^{\pi/2}\frac{\D\psi}{\sqrt{\smash[b]{1-u\sin^4\psi}}} \right)^2,\]which was verified by  B. C. Berndt with some highly technical transformations  of the generalized hypergeometric series $ _4F_3$  \cite[Ref.][pp.~110-111]{RN3}. In fact, the left-hand side of Ramanujan's formula equals\begin{align*}\int_0^{\pi/2}\frac{\mathbf K(\sqrt{\vphantom{1}u}\sin\theta)\D\theta}{\sqrt{\smash[b]{1-u\sin^2\theta}}}=\int_0^{\pi/2}\frac{\mathbf K(\sin\theta)\sin\theta\D\theta}{\sqrt{\smash[b]{1-u\sin^2\theta}}}\end{align*}after integration in $ \phi$ and reference to Eq.~\ref{eq:Q'_duality}, while its right-hand side equals $ \pi^2[P_{-1/4}(1-2u)]^2/4$, according to the integral identity in Eq.~\ref{eq:P_quarter_sin4}. In this manner, we see that Ramanujan's formula  is actually equivalent to Eq.~\ref{eq:K_sqr'}, a consequence of spherical rotations.

In \cite[Ref.][pp.~111-112]{RN3}, Berndt followed up with a combinatorial proof of Entry 7(xi) in Ramanujan's notebook: \begin{align*}\int_0^{\pi/2}\int_0^{\pi/2}\frac{k\sin\theta\D\theta\D\phi}{\sqrt{\smash[b]{(1-k^{2}\sin^2\theta)(1-k^2\sin^2\theta\sin^2\phi)}}}={}&\int_0^{\pi/2}\int_0^{\arcsin k}\frac{\D\theta\D\phi}{\sqrt{\smash[b]{1-k^{2}\sin^2\theta-\sin^2\theta\sin^2\phi}}}\notag\\={}&\frac{1}{2}\left\{ \left[\mathbf K\left( \sqrt{\frac{1+k}{2}} \right)\right]^2-\left[\mathbf K\left( \sqrt{\frac{1-k}{2}} \right)\right]^2\right\},\end{align*}which turns out to be exactly the real part of our Eq.~\ref{eq:KiK'}, a geometrically motivated result.

Berndt has described his proofs of these two entries  as ``undoubtedly not those found by Ramanujan''. Judging from the use of spherical coordinates in Ramanujan's presentation, we take leave to think that the technique of spherical  rotations given in the proof of this proposition might  be closer to a rediscovery of  the pathway that Ramanujan has  originally undertaken.       \item Combining our analysis in  part (b) with Eq.~\ref{eq:sqr_comb}, we have effectively shown that spherical geometry entails the following integral formula\begin{align*}\int_{0}^{\pi/2}\,\mathbf K\left(\sqrt{k^{2}\cos^2{\phi}+\sin^2 {\phi}}\right)\D\phi= \int_0^{\pi/2}\frac{ \mathbf K(\sin\theta)\D \theta}{\sqrt{1-k^2\cos^2\theta}}=\mathbf K\left( \sqrt{\frac{1+k}{2}} \right)\mathbf K\left( \sqrt{\frac{1-k}{2}} \right),\end{align*}which is a relation that has nearly 80 years of history. In 2008, Bailey~\textit{et al.}~used the method of Bessel  moments to discover the following integral identity (Eq.~49 in \cite{Bailey2008})\begin{align*}\int_0^{\pi/2}\frac{\mathbf K(\sin\theta)\D \theta}{\sqrt{1-\cos^2\alpha\cos^2\theta}}=\mathbf K\left(\sin\frac{\alpha}{2}\right)\mathbf K\left(\cos\frac{\alpha}{2}\right),\end{align*}and remarked on its equivalence to a formula derived by Glasser in 1976:\begin{align*}\int_0^{\pi/2}\mathbf K\left(\sqrt{1-{\sin\vphantom{1}^2\alpha\cos\vphantom{1}^2\phi}}\right)\D\phi=\mathbf K\left(\sin\frac{\alpha}{2}\right)\mathbf K\left(\cos\frac{\alpha}{2}\right).\end{align*}As pointed out by Zucker \cite{Zucker2011}, the left-hand side in the  formula of Glasser, being a type of generalized Watson integral, can be expressed in terms of  $ \mathfrak F_4$, an Appell hypergeometric function, which in turn,  reduces to the product of two complete elliptic integrals of the first kind, according to a result by Bailey in 1933 \cite{Bailey1933}. Needless to say, all these displayed formulae are equivalent to each other, and echo back to Eq.~\ref{eq:LegendreP_half_spec'}, which was derived earlier from the Hobson coupling formula.  \eor
\end{enumerate}\end{remark}\begin{corollary}[Some Cauchy Principal Values]  For $ -1<x<1$, we have \begin{align}
\mathcal P\int_{-1}^1\mathbf K\left( \sqrt{\frac{1+\xi}{2}} \right)\mathbf K\left( \sqrt{\frac{1-\xi}{2}} \right)\frac{ 2\D \xi}{\pi (x-\xi)}={}&\left[\mathbf K \left(\sqrt{\frac{\vphantom{\xi}1+x}{2}}\right) \right]^2-\left[\mathbf K \left(\sqrt{\frac{\vphantom{\xi}1-x}{2}}\right) \right]^2\label{eq:RamanujanT}
\end{align}and \begin{align}&
\mathcal P\int_{-1}^1\left[\mathbf K\left(\sqrt{\frac{1+\xi}{2}}\right) \mathbf K\left( \sqrt{\frac{1-\xi}{2}} \right)+\mathbf K\left( \sqrt{\frac{1-\xi}{2}} \right) \mathbf E\left(\sqrt{\frac{1+\xi}{2}}\right)-\mathbf K\left(\sqrt{\frac{1+\xi}{2}}\right) \mathbf E\left( \sqrt{\frac{1-\xi}{2}} \right)\right]\frac{ \D \xi}{\pi (x-\xi)}\notag\\={}&-\left[\mathbf K \left(\sqrt{\frac{\vphantom{\xi}1-x}{2}}\right) \right]^2+ \mathbf K\left(\sqrt{\frac{\vphantom{\xi}1-x}{2}}\right)  \mathbf E\left(\sqrt{\frac{\vphantom{\xi}1-x}{2}}\right)+ \mathbf K\left(\sqrt{\frac{\vphantom{\xi}1+x}{2}}\right)  \mathbf E\left(\sqrt{\frac{\vphantom{\xi}1+x}{2}}\right).\label{eq:KE_EK_T}
\end{align}\end{corollary}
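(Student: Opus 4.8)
The plan is to obtain Eq.~\ref{eq:RamanujanT} as a Sokhotski--Plemelj boundary value of the Stieltjes-type representation in Eq.~\ref{eq:K_sqr_star}, and then to bootstrap Eq.~\ref{eq:KE_EK_T} from Eq.~\ref{eq:RamanujanT} by an integration-by-parts manipulation of the finite Hilbert transform, exactly in the spirit of the passage from Eq.~\ref{eq:P_minus_half_T} to Eq.~\ref{eq:P_minus_half_T1}.

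For Eq.~\ref{eq:RamanujanT}, I would start from the representation
\[
[\mathbf K(\sqrt t)]^2=\frac{2}{\pi}\int_0^1\frac{\mathbf K(\sqrt{\mathstrut\mu})\mathbf K(\sqrt{\mathstrut 1-\mu})}{1-\mu t}\D\mu,
\]
which is Eq.~\ref{eq:K_sqr_star} and is analytic in $t\in\mathbb C\smallsetminus[1,+\infty)$, since the integrand is singular only at $\mu=1/t\in(0,1)$, which forces $t>1$. I then fix $t_0>1$ and approach the cut from above, $t=t_0+i0^+$. On the right-hand side, taking real parts converts $1/(1-\mu t)$ into its Cauchy principal value $\mathcal P\,1/(1-\mu t_0)$. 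On the left-hand side I invoke the inverse modulus transformation (Eq.~\ref{eq:inv_mod}) to write $[\mathbf K(\sqrt{t_0})]^2=t_0^{-1}[\mathbf K(\sqrt{1/t_0})-i\mathbf K(\sqrt{(t_0-1)/t_0})]^2$, whose real part is $t_0^{-1}\{[\mathbf K(\sqrt{1/t_0})]^2-[\mathbf K(\sqrt{(t_0-1)/t_0})]^2\}$ (the sign of the $i0^+$ is immaterial here). Substituting $\mu=(1+\xi)/2$ and $t_0=2/(1+x)$ — so that $1/t_0=(1+x)/2$, $(t_0-1)/t_0=(1-x)/2$, and $1-\mu t_0=(x-\xi)/(1+x)$ — brings both sides to the claimed form, and the common factor $(1+x)/2$ cancels, yielding Eq.~\ref{eq:RamanujanT}.

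For Eq.~\ref{eq:KE_EK_T}, write $F(\xi):=\mathbf K(\sqrt{(1+\xi)/2})\,\mathbf K(\sqrt{(1-\xi)/2})$ and abbreviate the four elliptic integrals in the integrand by $\mathbf K,\mathbf K',\mathbf E,\mathbf E'$ (the primes denoting the complementary-modulus values $\sqrt{(1-\xi)/2}$). Using the differential relations for $\mathbf K$ and $\mathbf E$ recorded in the proof of Lemma~\ref{lm:P_nu_spec}, the chain rule in $\xi$ gives the key identity
\[
(1-\xi^2)\frac{\D F}{\D\xi}=\mathbf K'\mathbf E-\mathbf K\mathbf E'+\xi\,\mathbf K\mathbf K',
\]
so the integrand of Eq.~\ref{eq:KE_EK_T} equals $G(\xi):=(1-\xi)F(\xi)+(1-\xi^2)\tfrac{\D F}{\D\xi}$. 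Denoting the finite Hilbert transform by $(\mathcal T g)(x):=\mathcal P\int_{-1}^1 g(\xi)/[\pi(x-\xi)]\,\D\xi$, I compute $(\mathcal T G)(x)$ term by term. The algebraic identities $(1-\xi)/(x-\xi)=(1-x)/(x-\xi)+1$ and $(1-\xi^2)/(x-\xi)=(1-x^2)/(x-\xi)+(x+\xi)$, together with the decomposition $(1-\xi^2)\tfrac{\D F}{\D\xi}=\tfrac{\D}{\D\xi}[(1-\xi^2)F]+2\xi F$ and the integration-by-parts rule for the finite Hilbert transform (as used to pass to Eq.~\ref{eq:P_minus_half_T1}), reduce everything to $(\mathcal T F)(x)$, to its derivative, and to the moments $\int_{-1}^1 F\,\D\xi$ and $\int_{-1}^1\xi F\,\D\xi$. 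Here $(\mathcal T F)(x)=\tfrac12\{[\mathbf K(\sqrt{(1+x)/2})]^2-[\mathbf K(\sqrt{(1-x)/2})]^2\}$ is precisely Eq.~\ref{eq:RamanujanT}; the even symmetry $F(-\xi)=F(\xi)$ kills $\int_{-1}^1\xi F\,\D\xi$; and $\int_{-1}^1 F\,\D\xi=2\int_0^1\mathbf K(\sqrt{\mathstrut\mu})\mathbf K(\sqrt{\mathstrut 1-\mu})\,\D\mu=\pi^3/4$ by Eq.~\ref{eq:T_zero_minus_half}. Crucially, the boundary terms in the integration by parts vanish because $(1-\xi^2)F(\xi)\to0$ as $\xi\to\pm1$. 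All the constant contributions proportional to $\pi^2$ then cancel, and the transform collapses to $(1-x)(\mathcal T F)(x)+(1-x^2)\tfrac{\D}{\D x}(\mathcal T F)(x)$. Finally, differentiating $(\mathcal T F)(x)$ via the same $\mathbf K,\mathbf E$ derivative relations and simplifying with $(1+x)/2$ and $(1-x)/2$ reproduces exactly the right-hand side of Eq.~\ref{eq:KE_EK_T}.

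The main obstacle is the integration-by-parts step underlying Eq.~\ref{eq:KE_EK_T}: one must justify commuting the principal-value integral with multiplication by the polynomials $1-\xi$ and $1-\xi^2$ and with differentiation in $x$, and confirm that the endpoint contributions genuinely vanish despite the logarithmic singularities of $\mathbf K$ at $\xi=\pm1$ and the factor $(1-\xi^2)^{-1}$ in $\tfrac{\D F}{\D\xi}$. Once this boundary bookkeeping is controlled, the cancellation of every $\pi^2$ constant and the clean reduction to $(1-x)(\mathcal T F)(x)+(1-x^2)\tfrac{\D}{\D x}(\mathcal T F)(x)$ make the final identification routine.
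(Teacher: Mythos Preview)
Your proposal is correct and follows essentially the same route as the paper: Eq.~\ref{eq:RamanujanT} is obtained from Eq.~\ref{eq:K_sqr_star} by passing to the boundary and invoking the inverse modulus transformation, and Eq.~\ref{eq:KE_EK_T} is derived by recognizing the integrand as $(1-\xi)F+(1-\xi^2)F'=(1+\xi)F+\tfrac{\D}{\D\xi}[(1-\xi^2)F]$, then combining the polynomial-shift rule for $\widehat{\mathcal T}$ with the integration-by-parts rule (the paper packages these two pieces as Eqs.~\ref{eq:KK'_T1} and \ref{eq:KK'_T2}) and using $T_{0,-1/2}=\int_{-1}^1 F\,\D\xi=\pi^3/4$. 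Your observation that the $\pi^2$ constants cancel and that the result collapses to $(1-x)(\widehat{\mathcal T}F)(x)+(1-x^2)\tfrac{\D}{\D x}(\widehat{\mathcal T}F)(x)$ is exactly the ``little algebra'' the paper leaves implicit.
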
\begin{proof}From Eq.~\ref{eq:K_sqr_star} and the inverse modulus transformation $ \R\{[\mathbf K(1/\sqrt{t})]^2/t\}=[\mathbf K(\sqrt{t})]^2-[\mathbf K(\sqrt{1-t})]^2$ for $ t\in(0,1)$ (cf.~Eq.~\ref{eq:inv_mod}), we may deduce\begin{align*}[\mathbf K(\sqrt{t})]^2-[\mathbf K(\sqrt{1-t})]^2={}&\frac{2}{\pi}\mathcal P\int_0^1\frac{\mathbf K(\sqrt{\mathstrut\mu})\mathbf K(\sqrt{\mathstrut1-\mu})\D \mu}{t-\mu },\quad 0<t<1,\end{align*}which is equivalent to Eq.~\ref{eq:RamanujanT}.

Now, we proceed as in the proof of  Corollary~\ref{cor:Beltrami_app}\ref{itm:Tricomi_P_half}, and compute\begin{align}
\mathcal P\int_{-1}^1\mathbf K\left( \sqrt{\frac{1+\xi}{2}} \right)\mathbf K\left( \sqrt{\frac{1-\xi}{2}} \right)\frac{ 2(1+\xi)\D \xi}{\pi (x-\xi)}={}&(1+x)\left\{\left[\mathbf K \left(\sqrt{\frac{\vphantom{\xi}1+x}{2}}\right) \right]^2-\left[\mathbf K \left(\sqrt{\frac{\vphantom{\xi}1-x}{2}}\right) \right]^2\right\}-\frac{2}{\pi}\int_{-1}^1\mathbf K\left( \sqrt{\frac{1+\xi}{2}} \right)\mathbf K\left( \sqrt{\frac{1-\xi}{2}} \right)\D \xi\notag\\={}&(1+x)\left\{\left[\mathbf K \left(\sqrt{\frac{\vphantom{\xi}1+x}{2}}\right) \right]^2-\left[\mathbf K \left(\sqrt{\frac{\vphantom{\xi}1-x}{2}}\right) \right]^2\right\}-\frac{\pi^2}{2},\label{eq:KK'_T1}\end{align}  where we have quoted the result $ T_{0,-1/2}=\int_{-1}^{1}P_{-1/2}(\xi)P_{-1/2}(-\xi)\D\xi=\pi$ from a limit scenario of Eq.~\ref{eq:T_0_nu}.  Likewise, we may deduce from the equation above another Cauchy principal value:\begin{align*}
\mathcal P\int_{-1}^1\mathbf K\left( \sqrt{\frac{1+\xi}{2}} \right)\mathbf K\left( \sqrt{\frac{1-\xi}{2}} \right)\frac{ 2(1-\xi^{2})\D \xi}{\pi (x-\xi)}={}&(1-x^{2})\left\{\left[\mathbf K \left(\sqrt{\frac{\vphantom{\xi}1+x}{2}}\right) \right]^2-\left[\mathbf K \left(\sqrt{\frac{\vphantom{\xi}1-x}{2}}\right) \right]^2\right\}+\frac{\pi^2x}{2}.\end{align*}Like what we did in  the proof of  Corollary~\ref{cor:Beltrami_app}\ref{itm:Tricomi_P_half}, we can use the last equation to derive\begin{align}\mathcal P\int_{-1}^1\frac{\D}{\D\xi}\left[(1-\xi^{2})\mathbf K\left( \sqrt{\frac{1+\xi}{2}} \right)\mathbf K\left( \sqrt{\frac{1-\xi}{2}} \right)\right]\frac{ 2\D \xi}{\pi (x-\xi)}={}&\frac{\D}{\D x}\left((1-x^{2})\left\{\left[\mathbf K \left(\sqrt{\frac{\vphantom{\xi}1+x}{2}}\right) \right]^2-\left[\mathbf K \left(\sqrt{\frac{\vphantom{\xi}1-x}{2}}\right) \right]^2\right\}+\frac{\pi^2x}{2}\right).\label{eq:KK'_T2}\end{align} after integration by parts. A little algebra then reveals that Eqs.~\ref{eq:KK'_T1} and \ref{eq:KK'_T2} together entail Eq.~\ref{eq:KE_EK_T}.
\qed\end{proof}

\section{\label{sec:Tricomi}Finite Hilbert Transform and Tricomi Pairing}\subsection{Parseval Identity for Tricomi Transforms}The finite Hilbert transform on the interval $ (-1,1)$, also known as the Tricomi transform, is defined via a Cauchy principal value (see \cite[Ref.][Chap.~4]{TricomiInt} or \cite[Ref.][Chap.~11]{KingVol1}):\begin{align*}(\widehat{\mathcal T} f)(x):=\mathcal P\int_{-1}^1\frac{f(\xi)\D \xi}{\pi(x-\xi)},\quad\text{a.e. } x\in(-1,1).\end{align*}This induces a continuous linear operator $ \widehat {\mathcal T}:L^p(-1,1)\longrightarrow L^p(-1,1)$ for $ 1<p<+\infty$ \cite[Ref.][p.~188]{SteinWeiss}.

We can  derive some new integral relations  from  a Parseval-type identity for the Tricomi transform (see \cite[Ref.][\S4.3, Eq.~2]{TricomiInt} or \cite[Ref.][Eq.~11.237]{KingVol1}):\begin{align}\int_{-1}^1 f(x)(\widehat{\mathcal T}g)(x)\D x+\int_{-1}^1 g(x)(\widehat{\mathcal T}f)(x)\D x={}&0,\label{eq:Tricomi_Parseval1}\end{align}where $ f\in L^p(-1,1),p>1$; $ g\in L^q(-1,1),q>1$ and $ \frac1p+\frac1q<1$. We call such a procedure ``Tricomi pairing'', for which an example in the proposition below puts  finishing touches on a proof for the conjectural identity stated in the introduction. \begin{proposition}[An Application of Tricomi Pairing to Multiple Elliptic Integrals]\label{prop:Tricomi_Paring}There are several integrals that evaluate to the same number $ [\Gamma(\frac{1}{4})]^{8}/(128\pi^2)$:\begin{align}&\int_0^1\left[\mathbf K\left( \sqrt{1-k^2} \right)\right]^{3}\D k=\frac{10}{3}\int_0^1[\mathbf K(k)]^{3}\D k=5\int_0^1[\mathbf K(k)]^{3}k\D k\notag\\={}&3\int_0^1[\mathbf K(k)]^2\mathbf K\left( \sqrt{1-k^2} \right)\D k=2\int_0^1\mathbf K(k)\left[\mathbf K\left( \sqrt{1-k^2} \right)\right]^{2}\D k=6\int_0^1[\mathbf K(k)]^2\mathbf K\left( \sqrt{1-k^2} \right)k\D k.\label{eq:tripleK1}\end{align}\end{proposition}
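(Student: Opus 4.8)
The plan is to collapse the six integrals onto one another by combining a single closed evaluation from the Clebsch--Gordan table with a web of linear relations, so that everything reduces to one anchored constant. Throughout I write $\mathbf K=\mathbf K(k)$ and $\widetilde{\mathbf K}=\mathbf K(\sqrt{1-k^2})$, and I label the six integrals $I_1,\dots,I_6$ in the order displayed, so that the assertion reads $I_1=\tfrac{10}{3}I_2=5I_3=3I_4=2I_5=6I_6$. The organizing observation is that $\mathbf K$ and $\widetilde{\mathbf K}$ form a fundamental system for one and the same second-order equation in $k$, whence $\mathbf K^3,\mathbf K^2\widetilde{\mathbf K},\mathbf K\widetilde{\mathbf K}^2,\widetilde{\mathbf K}^3$ span the solution space of its symmetric-cube (fourth-order) analogue of Eq.~\ref{eq:LegendreSqrDiff_nu}. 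Consequently all six $I_j$ are weighted moments (with weight $1$ or $k$) of solutions of a single differential equation, and the reflection $k\mapsto\sqrt{1-k^2}$ permutes these building blocks while swapping $\mathbf K\leftrightarrow\widetilde{\mathbf K}$, converting the weight $1$ into $k/\sqrt{1-k^2}$ and leaving the weight $k$ unchanged.

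First I would fix the anchor. From Eq.~\ref{eq:T_half_half} one has $\int_0^1[\mathbf K(\sqrt{1-t})]^2\mathbf K(\sqrt t)\,\D t=[\Gamma(\tfrac14)]^8/(384\pi^2)$; the substitution $t=k^2$ rewrites the left side as $2\int_0^1 k\,\mathbf K\,\widetilde{\mathbf K}^2\,\D k$, which equals $2I_6$ after the reflection. Hence $I_6=[\Gamma(\tfrac14)]^8/(768\pi^2)$ and $6I_6=[\Gamma(\tfrac14)]^8/(128\pi^2)$, the claimed common value, so it remains only to prove the linear relations. The decisive bridge from the pure cube $I_1=\int_0^1\widetilde{\mathbf K}^3\,\D k$ to the mixed family is one Tricomi pairing: in the Parseval identity Eq.~\ref{eq:Tricomi_Parseval1} I would take $f(\xi)=\mathbf K(\sqrt{(1-\xi)/2})/\sqrt{1+\xi}$, with finite Hilbert transform $\mathbf K(\sqrt{(1+x)/2})/\sqrt{1+x}$ by Eq.~\ref{eq:P_minus_half_T}, and $g(\xi)=\mathbf K(\sqrt{(1+\xi)/2})\mathbf K(\sqrt{(1-\xi)/2})$, with transform $\tfrac12\{[\mathbf K(\sqrt{(1+x)/2})]^2-[\mathbf K(\sqrt{(1-x)/2})]^2\}$ by Eq.~\ref{eq:RamanujanT}. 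Under $x=2k^2-1$ the two Parseval terms become $\sqrt2\,(I_4-I_1)$ and $2\sqrt2\,I_4$, so their vanishing sum gives precisely $I_1=3I_4$.

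The remaining relations I would extract from holonomic moment identities. Integrating total derivatives of bilinear and trilinear concomitants of $\mathbf K$ and $\widetilde{\mathbf K}$ against the weights $1$ and $k$, and using the derivative formulas $\D\mathbf K/\D k=\mathbf E/[k(1-k^2)]-\mathbf K/k$ and $\D\mathbf E/\D k=(\mathbf E-\mathbf K)/k$ of Lemma~\ref{lm:P_nu_spec}, yields relations among the $I_j$ that also carry complete elliptic integrals of the second kind; these $\mathbf E$-factors are then removed by the classical Legendre relation $\mathbf E(k)\widetilde{\mathbf K}+\mathbf E(\sqrt{1-k^2})\mathbf K-\mathbf K\widetilde{\mathbf K}=\pi/2$ together with the lower moment $\int_0^1\mathbf K(\sqrt t)\mathbf K(\sqrt{1-t})\,\D t=\pi^3/8$ from Eq.~\ref{eq:T_zero_minus_half}. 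This route is meant to deliver the weight-change relations $I_4=2I_6$ and $I_5=3I_6$ inside the mixed family (so $I_5=\tfrac32 I_4$, with $I_6=\int_0^1 k\,\mathbf K\,\widetilde{\mathbf K}^2\,\D k$ by reflection), as well as the pure-versus-mixed relations $I_2=\tfrac95 I_6$ and $I_3=\tfrac65 I_6$, whence $I_3=\tfrac23 I_2$. Assembling $I_1=3I_4=6I_6$ with these produces the whole chain, and the anchor then pins the value at $[\Gamma(\tfrac14)]^8/(128\pi^2)$.

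The hard part will be the holonomic step rather than the Tricomi bridge. The pure-cube relations $I_1=\tfrac{10}{3}I_2=5I_3$ are genuine period relations in which the weight $k/\sqrt{1-k^2}$ generated by reflection must be reconciled with the plain weights $1$ and $k$, so the integration-by-parts bookkeeping is heavy and every occurrence of $\mathbf E(k)$ and $\mathbf E(\sqrt{1-k^2})$ has to be tracked and cancelled. One must also control the boundary contributions at $k=0$ and $k=1$, where $\mathbf K$ and $\widetilde{\mathbf K}$ acquire logarithmic singularities at opposite endpoints, selecting the antiderivatives so that every concomitant decays; and one must verify the $L^p$ hypotheses of the Parseval identity for the pair $f,g$ above, which is delicate owing to the $1/\sqrt{1+\xi}$ factor and the logarithmic growth of $\mathbf K$ near $\xi=\pm1$.
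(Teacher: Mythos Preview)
Your Tricomi bridge $I_1=3I_4$ (with exactly the same $f,g$) and your anchor $6I_6=[\Gamma(\tfrac14)]^8/(128\pi^2)$ via Eq.~\ref{eq:T_half_half} are precisely what the paper does. The genuine divergence is in the remaining linear relations, where you propose holonomic moment identities with $\mathbf E$-cancellation through the Legendre relation; the paper avoids all of that by using Landen's transformation as a plain change of variable. Substituting $k=(1-\xi)/(1+\xi)$ in $I_1$ and invoking $\mathbf K(2\sqrt\xi/(1+\xi))=(1+\xi)\mathbf K(\xi)$ gives $I_1=2\int_0^1[\mathbf K(\xi)]^3(1+\xi)\,\D\xi=2(I_2+I_3)$; the same substitution in $I_2$ with the companion Landen identity $2\mathbf K((1-\xi)/(1+\xi))=(1+\xi)\mathbf K(\sqrt{1-\xi^2})$, followed by $\xi\mapsto\sqrt{1-\eta^2}$, gives $I_2=\tfrac14 I_1+\tfrac14 I_3$. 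These two linear equations already force the first line $I_1=\tfrac{10}{3}I_2=5I_3$; the second-line relations among $I_4,I_5,I_6$ follow by the same Landen device (the paper cites Wan~\cite{Wan2012} for the details). This route is pure algebraic substitution---no integration by parts, no $\mathbf E$-terms, no boundary bookkeeping---so it sidesteps exactly the difficulties you flag as ``the hard part.'' Your holonomic scheme should work in principle, but it is heavier than necessary here; the payoff of your approach would be that it generalises to weights and integrands where no modular transformation like Landen's is available.
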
\begin{proof}
If we set $ f(x)=\mathbf K(\sqrt{(1-x)/2})/\sqrt{1+x},-1<x<1$ and  $ g(x)=2\mathbf K(\sqrt{(1+x)/2})\mathbf K(\sqrt{(1-x)/2}),-1<x<1$ in Eq.~\ref{eq:Tricomi_Parseval1}, while recalling the Cauchy principal values given in  Eqs.~\ref{eq:P_minus_half_T} and \ref{eq:RamanujanT},   then we arrive at \[2\int_{-1}^1 \left[\mathbf K\left( \sqrt{\frac{\vphantom{\xi}1+x}{2}} \right)\right]^2\mathbf K\left( \sqrt{\frac{\vphantom{\xi}1-x}{2}} \right)\frac{\D x}{\sqrt{1+x}}=-\int_{-1}^1\mathbf K\left( \sqrt{\frac{1-\xi}{2}} \right)\left\{ \left[\mathbf K \left(\sqrt{\frac{\vphantom{\xi}1+\xi}{2}}\right) \right]^2-\left[\mathbf K \left(\sqrt{\frac{\vphantom{\xi}1-\xi}{2}}\right) \right]^2 \right\}\frac{\D \xi}{\sqrt{1+\xi}}.\]This instantly rearranges into $ \int_0^1[\mathbf K( \sqrt{1-k^2} )]^{3}\D k=3\int_0^1[\mathbf K(k)]^2\mathbf K( \sqrt{1-k^2} )\D k$ (an identity that  Wan conjectured numerically in \cite{Wan2012} without an analytic proof), which  reveals the equivalence between the leading items  of the first two lines  in Eq.~\ref{eq:tripleK1}.

 Writing $ k=(1-\xi)/(1+\xi)$ and employing Landen's transformation $\mathbf K(2\sqrt{\xi}/(1+\xi))=(1+\xi)\mathbf K(\xi) $, one has
  \begin{align*}\int_0^1\left[\mathbf K\left( \sqrt{1-k^2} \right)\right]^{3}\D k=\int_0^1\left[\mathbf K\left( \sqrt{1-\left(\frac{1-\xi}{1+\xi}\right)^2} \right)\right]^{3}\D \frac{1-\xi}{1+\xi}=2\int_0^1[\mathbf K(\xi)]^3(1+\xi)\D\xi;\end{align*}writing $ k=(1-\xi)/(1+\xi)$ and employing Landen's transformation $2\mathbf K((1-\xi)/(1+\xi))=(1+\xi)\mathbf K(\sqrt{1-\xi^{2}}) $, one has
\begin{align*}\int_0^1[\mathbf K(k)]^3\D k=\int_0^1\left[\mathbf K\left( \frac{1-\xi}{1+\xi} \right)\right]^{3}\D \frac{1-\xi}{1+\xi}=\frac{1}{4}\int_0^1\left[\mathbf K\left( \sqrt{1-\xi^2} \right)\right]^{3}(1+\xi)\D\xi=\frac{1}{4}\int_0^1\left[\mathbf K\left( \sqrt{1-k^2} \right)\right]^{3}\D k+\frac{1}{4}\int_0^1[\mathbf K(\eta)]^3\eta\D\eta,\end{align*}where the last step is a trivial substitution $ \xi\mapsto\sqrt{1-\eta^2}$. The  two simultaneous equations displayed above make it possible to eliminate any one among the three quantities  $ \int_0^1[\mathbf K(\sqrt{1-k^2})]^3\D k$, $ \int_0^1[\mathbf K(k)]^3\D k$, $ \int_0^1[\mathbf K(k)]^3k\D k$, and determine the ratio between the  two remaining numbers. Thus, we have verified the chain of identities  in the first line
 of  Eq.~\ref{eq:tripleK1} (cf.~\cite[Ref.][Eq.~29]{Wan2012}).
The relations \textit{within}  the second line of   Eq.~\ref{eq:tripleK1} can  be likewise established by successive Landen's transformations, as detailed in the first paragraph of \cite[Ref.][p.~139]{Wan2012}.

As we recall from Eq.~\ref{eq:T_half_half} that \begin{align*}6\int_0^1[\mathbf K(k)]^2\mathbf K\left( \sqrt{1-k^2} \right)k\D k=\frac{[\Gamma(\frac{1}{4})]^{8}}{128\pi^{2}},\end{align*}the verification is complete. \qed\end{proof}\begin{remark}Sometimes, the output of Tricomi pairing can also be immediately recovered by more straightforward means. For example,  upon setting\begin{align*}f(\xi)={}&2\mathbf K\left( \sqrt{\frac{1+\xi}{2}} \right)\mathbf K\left( \sqrt{\frac{1-\xi}{2}} \right),\notag\\g( \xi)={}&\mathbf K\left(\sqrt{\frac{1+\xi}{2}}\right) \mathbf K\left( \sqrt{\frac{1-\xi}{2}} \right)+\mathbf K\left( \sqrt{\frac{1-\xi}{2}} \right) \mathbf E\left(\sqrt{\frac{1+\xi}{2}}\right)-\mathbf K\left(\sqrt{\frac{1+\xi}{2}}\right) \mathbf E\left( \sqrt{\frac{1-\xi}{2}} \right)\end{align*} in  Eq.~\ref{eq:Tricomi_Parseval1}, one arrives at a vanishing identity:\begin{align*}0={}&\int_0^1\left[ \mathbf K\left( \sqrt{1-k^2} \right) \right]^2\left[\mathbf K\left( \sqrt{1-k^2} \right)\mathbf K(k) +\mathbf K\left( \sqrt{1-k^2} \right) \mathbf E(k)-3\mathbf K(k)\mathbf E\left( \sqrt{1-k^2} \right)\right]k\D k.\end{align*} As pointed out by an anonymous referee, the equation above is anticipated from the fact that the integrand is precisely the derivative of $ k^2(1-k^2)\mathbf K(k)[\mathbf K(\sqrt{1-k^2})]^3$, which makes it less surprising than Eq.~\ref{eq:tripleK1}. It might be still interesting to ask if  Eq.~\ref{eq:tripleK1} can  be likewise reduced into finite steps of algebraic manipulations on elliptic integrals and  applications of the Newton-Leibniz formula (cf.~\S\ref{sec:outlook}).      \eor\end{remark}
 \subsection{\label{subsec:Tricomi_comp}Tricomi Transform of $ P_{\nu}(x)P_\nu(-x)$}The formula in Eq.~\ref{eq:RamanujanT} can be rewritten as \[\mathcal P\int_{-1}^1\frac{2P_{-1/2}(\xi)P_{-1/2}(-\xi)}{\pi(x-\xi)}\D \xi=-\{[P_{-1/2}(x)]^2-[P_{-1/2}(-x)]^2\},\quad -1<x<1.\]This is not accidental. In the next proposition, we will generalize such a Tricomi transform relation to Legendre functions of arbitrary degree $ \nu$.\begin{proposition}[Tricomi Transform of $ P_\nu(\xi)P_\nu(-\xi)$]\label{prop:P_nu_T}For any  $ \nu\in\mathbb C\smallsetminus\mathbb Z$, we have\begin{align}
\mathcal
P\int_{-1}^1\frac{2P_{\nu}(\xi)P_{\nu}(-\xi)}{\pi(x-\xi)}\D \xi=\frac{[P_{\nu}(x)]^2-[P_{\nu}(-x)]^2}{\sin(\nu\pi)},\quad -1<x<1.\label{eq:P_nu_T}
\end{align}For $n\in\mathbb Z_{\geq0} $, there is an identity\begin{align}
\mathcal
P\int_{-1}^1\frac{[P_{n}(\xi)]^{2}}{2(x-\xi)}\D \xi=\mathcal
P\int_{-1}^1\frac{[P_{-n-1}(\xi)]^{2}}{2(x-\xi)}\D \xi=P_n(x)Q_n(x),\quad -1<x<1.\label{eq:P_n_T}
\end{align}\end{proposition}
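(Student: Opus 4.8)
The plan is to treat both identities through the Cauchy transform and the Sokhotski--Plemelj relations, converting each finite Hilbert transform into the boundary average of a function holomorphic off $[-1,1]$. For \ref{eq:P_nu_T}, fix $\nu\in\mathbb C\smallsetminus\mathbb Z$ and set $\Phi(\xi)=P_\nu(\xi)P_\nu(-\xi)$; this is even and carries only integrable logarithmic singularities at $\pm1$, so $\Phi\in L^p(-1,1)$ for every $p<\infty$ and $\widehat{\mathcal T}(2\Phi)$ is well defined. I would introduce $H(z):=\frac{2}{\pi}\int_{-1}^1\frac{\Phi(\xi)}{z-\xi}\D\xi$, holomorphic on $\mathbb C\smallsetminus[-1,1]$ with $H(z)=O(1/z)$ at infinity. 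By Sokhotski--Plemelj, $H(x\pm i0)=\widehat{\mathcal T}(2\Phi)(x)\mp2i\Phi(x)$, so the sought principal value is the boundary average $\frac12[H(x+i0)+H(x-i0)]$, while $\Phi$ controls the jump $H(x+i0)-H(x-i0)=-4i\Phi(x)$.

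Next I would produce a closed-form competitor. Since the Legendre equation is invariant under $z\mapsto-z$, the functions $P_\nu(\pm z)$ and $Q_\nu(\pm z)$ all solve it, and any product of two such solutions solves the third-order Appell equation \ref{eq:LegendreSqrDiff_nu}; in particular $\mathcal R(z):=P_\nu(-z)Q_\nu(z)-P_\nu(z)Q_\nu(-z)$ does, is odd, and is $O(1/z)$ at infinity. Using $Q_\nu(x\pm i0)=Q_\nu(x)\mp\frac{i\pi}{2}P_\nu(x)$ together with its reflection $Q_\nu(-x\mp i0)=Q_\nu(-x)\pm\frac{i\pi}{2}P_\nu(-x)$, one computes that across $(-1,1)$ the function $\frac{2}{\pi}\mathcal R$ has exactly the jump $-4i\Phi(x)$. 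Granting that $\mathcal R$ is holomorphic on all of $\mathbb C\smallsetminus[-1,1]$, the difference $H-\frac{2}{\pi}\mathcal R$ is then entire and vanishes at infinity, hence $\equiv0$ by Liouville, giving $\widehat{\mathcal T}(2\Phi)(x)=\frac{2}{\pi}\big[P_\nu(-x)Q_\nu(x)-P_\nu(x)Q_\nu(-x)\big]$. Substituting the definition \ref{eq:def_Qnu} of $Q_\nu$ and its reflection collapses the right-hand side to $\frac{[P_\nu(x)]^2-[P_\nu(-x)]^2}{\sin(\nu\pi)}$, which is \ref{eq:P_nu_T}.

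The main obstacle is exactly the holomorphy of $\mathcal R$ off $[-1,1]$: each of $P_\nu(z)Q_\nu(-z)$ and $P_\nu(-z)Q_\nu(z)$ individually carries branch cuts along $(-\infty,-1]\cup[1,\infty)$, and I must show that these spurious cuts cancel in the difference. I would settle this by invoking the standard connection formulae for $P_\nu$ and $Q_\nu$ across the rays $\mathbb R\smallsetminus(-1,1)$, where a direct comparison shows the jumps of the two products coincide on $(1,\infty)$, so that $\mathcal R$ continues analytically across $(1,\infty)$ and, by oddness, across $(-\infty,-1)$ as well. As internal corroboration, the strong vanishing factor $(1-\xi^2)^2$ in \ref{eq:LegendreSqrDiff_nu} suppresses the logarithmic endpoint behaviour of $\Phi$ enough that the bilinear concomitant vanishes at $\xi=\pm1$, which forces $H$ itself to satisfy the Appell equation off the cut and thus to match $\mathcal R$; moreover the specialization $\nu=-\frac12$, where $P_{-1/2}(\pm x)=\frac{2}{\pi}\mathbf K(\sqrt{(1\mp x)/2})$, reduces the claimed formula to \ref{eq:RamanujanT} and fixes the normalization.

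Finally, for the integer identity \ref{eq:P_n_T} the same scheme is cleaner because $P_n$ is a polynomial, so $G(z):=P_n(z)Q_n(z)$ is already holomorphic on $\mathbb C\smallsetminus[-1,1]$ with $G(z)=O(1/z)$ and no spurious cuts, its jump across $(-1,1)$ being $-i\pi[P_n(x)]^2$. Comparing $G$ with the Cauchy transform of $[P_n]^2$ and applying Liouville gives $\widehat{\mathcal T}([P_n]^2)(x)=\frac{2}{\pi}P_n(x)Q_n(x)$, that is $\mathcal P\int_{-1}^1\frac{[P_n(\xi)]^2}{2(x-\xi)}\D\xi=P_n(x)Q_n(x)$; the statement for $[P_{-n-1}]^2$ then follows at once from the degree symmetry $P_{-n-1}\equiv P_n$.
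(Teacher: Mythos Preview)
Your approach is genuinely different from the paper's. The paper's primary proof is entirely real-variable: it tests the putative identity against every Legendre polynomial $P_\ell$, converts the Tricomi transform into an integral against $Q_\ell$ via the Neumann representation, and then shows by induction on $\ell$ (using the Appell third-order equation and the $P_\mu,Q_\mu$ recursions) that each such moment vanishes. The paper also sketches a second route in the Remark following Proposition~\ref{eq:P_nu_T00}: combine two instances of the generalized Neumann formula $\widehat{\mathcal T}\!\big[(1+\xi)^{\nu-n}P_\nu\big]=\tfrac{2}{\pi}(1+x)^{\nu-n}Q_\nu$ via the Hardy--Poincar\'e--Bertrand identity \eqref{eq:Tricomi_convolution}. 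Your Cauchy-integral/Liouville scheme is a third, more structural route: it packages the problem as identifying a holomorphic function on $\mathbb C\smallsetminus[-1,1]$ by its jump and its decay. What the paper's methods buy is that one never leaves the interval or touches branch behaviour of $P_\nu,\mathcal Q_\nu$ in the complex plane; what your method buys is a conceptually clean matching argument that would transfer readily to other products of special-function solutions.

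There are, however, two genuine gaps beyond what you flag. First, the assertion that $\mathcal R(z)=O(1/z)$ at infinity is not free: the Frobenius exponents of the Appell equation at $z=\infty$ are $2\nu$, $-1$, $-2\nu-2$, so a priori $\mathcal R$ could carry a $z^{2\nu}$ or $z^{-2\nu-2}$ component. You can rescue this cleanly once the branch-cut cancellation is established: if $\mathcal R$ is single-valued on $\mathbb C\smallsetminus[-1,1]$ then its monodromy around $\infty$ is trivial, and for generic $\nu$ the only single-valued local solution at $\infty$ is the $z^{-1}$ branch, forcing $\mathcal R=O(1/z)$; continuity in $\nu$ then covers the remaining cases. (Alternatively, invoke the symmetry $P_\nu=P_{-\nu-1}$ to reduce to $\R\nu\ge-\tfrac12$, where the crude product asymptotics already give decay.) Second, after matching jumps on $(-1,1)$ you still need to dispose of the points $\pm1$: both $H$ and $\tfrac{2}{\pi}\mathcal R$ carry $(\log)^2$ behaviour there (from $P_\nu(-\xi)\sim\tfrac{\sin\nu\pi}{\pi}\log(1-\xi)$ and from $P_\nu(-z)\mathcal Q_\nu(z)$ respectively), so their difference is $o(|z\mp1|^{-\varepsilon})$ for every $\varepsilon>0$, which makes those isolated singularities removable and the Liouville step legitimate. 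Your identified ``main obstacle''---the cancellation of the cuts of $P_\nu(\mp z)\mathcal Q_\nu(\pm z)$ on $(1,\infty)$---does go through via the standard continuation formulae, but it is exactly the place where all the work sits; a clean way to execute it is to compute, for $x>1$, the jump $[P_\nu(-x-i0)-P_\nu(-x+i0)]\,\mathcal Q_\nu(x)-P_\nu(x)\,[\mathcal Q_\nu(-x-i0)-\mathcal Q_\nu(-x+i0)]$ using the known discontinuities of $P_\nu$ and $\mathcal Q_\nu$ across $(-\infty,-1)$, and check it vanishes. Your treatment of the integer case is correct and indeed simpler than the paper's (which obtains \eqref{eq:P_n_T} as the $\nu\to n$ limit of \eqref{eq:P_nu_T}).
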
\begin{proof}To verify Eq.~\ref{eq:P_nu_T}, it would suffice to  demonstrate that\begin{align}0={}&\int_{-1}^1 P_\ell(x)\left\{\frac{[P_{\nu}(x)]^2-[P_{\nu}(-x)]^2}{\sin(\nu\pi)}-\mathcal P\int_{-1}^1\frac{2P_{\nu}(\xi)P_{\nu}(-\xi)}{\pi(x-\xi)}\D \xi\right\}\D x\notag\\={}&\frac{1}{\sin(\nu\pi)}\int_{-1}^1 P_\ell(x)\{[P_{\nu}(x)]^2-[P_{\nu}(-x)]^2\}\D x+\frac{4}{\pi}\int_{-1}^1 Q_\ell(x)P_{\nu}(x)P_{\nu}(-x)\D x,\quad \forall\ell\in\mathbb Z_{\geq0}.\label{eq:P_nu_T_Q}\end{align}Here, in the last line, we have used the  Parseval identity of Tricomi pairing (Eq.~\ref{eq:Tricomi_Parseval1}), along with the Neumann integral representation for Legendre functions of the second kind (cf.~\cite[Ref.][Eq.~11.269]{KingVol1} or \cite[Ref.][Table 1.12A, Eq.~12A.26]{KingVol2}):\begin{align}Q_{\ell}(x)=\mathcal P\int_{-1}^1\frac{P_\ell(\xi)\D \xi}{2(x-\xi)},\quad \forall x\in(-1,1),\forall\ell\in\mathbb Z_{\geq0}.\label{eq:Neumann_int_repn}\end{align}

For  a non-negative even number  $ \ell$, both addends in the last line of Eq.~\ref{eq:P_nu_T_Q} vanish because the integrands are odd functions. We may now settle  Eq.~\ref{eq:P_nu_T_Q} for odd numbers $ \ell$ by induction. For $ \ell=1$, we  use Eq.~\ref{eq:LegendreSqrDiff_nu} to compute  \begin{align*}&\frac{1}{\sin(\nu\pi)}\int_{-1}^1 P_1(x)\{[P_{\nu}(x)]^2-[P_{\nu}(-x)]^2\}\D x=\frac{1}{\sin(\nu\pi)}\int_{-1}^1 x\{[P_{\nu}(x)]^2-[P_{\nu}(-x)]^2\}\D x\notag\\={}&-\frac{1}{4\nu(\nu{+1)}\sin(\nu\pi)}\lim_{x\to1-0^+}(1-x^{2})\frac{\D}{\D x}\left[(1-x^{2})\frac{\D\{[P_{\nu}(x)]^2-[P_{\nu}(-x)]^2\}}{\D x}\right]\notag\\&+\frac{1}{4\nu(\nu{+1)}\sin(\nu\pi)}\lim_{x\to-1+0^+}(1-x^{2})\frac{\D}{\D x}\left[(1-x^{2})\frac{\D\{[P_{\nu}(x)]^2-[P_{\nu}(-x)]^2\}}{\D x}\right]=\frac{4\sin(\nu\pi)}{\nu(\nu+1)\pi^{2}}.\end{align*}Meanwhile,  we may check that\begin{align*}&\frac{4}{\pi}\int_{-1}^1 Q_1(x)P_{\nu}(x)P_{\nu}(-x)\D x=\frac{4}{\pi}\int_{-1}^1 \left(-1+\frac{x}{2}\log\frac{1+x}{1-x}\right)P_{\nu}(x)P_{\nu}(-x)\D x\notag\\={}&-\frac{4}{\pi}\int_{-1}^1 P_{\nu}(x)P_{\nu}(-x)\D x-\frac{2}{\pi}\int_{-1}^1 \frac{\log\frac{1+x}{1-x}}{4\nu(\nu{+1)}}\frac{\D}{\D x}\left\{(1-x^{2})\frac{\D}{\D x}\left[(1-x^{2})\frac{\D(P_{\nu}(x)P_{\nu}(-x))}{\D x}\right]+4\nu(\nu+1)(1-x^2)P_{\nu}(x)P_{\nu}(-x)\right\} \D x\notag\\={}&\frac{4}{\pi}\int_{-1}^1 \frac{1}{4\nu(\nu{+1)}}\frac{\D}{\D x}\left[(1-x^{2})\frac{\D(P_{\nu}(x)P_{\nu}(-x))}{\D x}\right]\D x=-\frac{4\sin(\nu\pi)}{\nu(\nu+1)\pi^{2}}.\end{align*}Thus,   Eq.~\ref{eq:P_nu_T_Q} holds for $ \ell=1$. Noting that the Legendre functions of the first and second kinds satisfy similar recursion relations, namely,\begin{align*}&&(2\mu+1)(1-x^{2})\frac{\D P_\mu(x)}{\D x}={}&\mu(\mu+1)[P_{\mu-1}(x)-P_{\mu+1}(x)];& (2\mu+1) x P_\mu(x)={}&(\mu+1)P_{\mu+1}(x)+\mu P_{\mu-1}(x);&&\notag\\&&(2\mu+1)(1-x^{2})\frac{\D Q_\mu(x)}{\D x}={}&\mu(\mu+1)[Q_{\mu-1}(x)-Q_{\mu+1}(x)];& (2\mu+1) xQ_\mu(x)={}&(\mu+1)Q_{\mu+1}(x)+\mu Q_{\mu-1}(x),&&\end{align*}we can deduce\begin{align*}&\frac{\nu(\nu+1)(\ell+1)}{\sin(\nu\pi)}\int_{-1}^1 P_{\ell+1}(x)\{[P_{\nu}(x)]^2-[P_{\nu}(-x)]^2\}\D x+\frac{4\nu(\nu+1)(\ell+1)}{\pi}\int_{-1}^1 Q_{\ell+1}(x)P_{\nu}(x)P_{\nu}(-x)\D x\notag\\&+\frac{\nu(\nu+1)\ell}{\sin(\nu\pi)}\int_{-1}^1 P_{\ell-1}(x)\{[P_{\nu}(x)]^2-[P_{\nu}(-x)]^2\}\D x+\frac{4\nu(\nu+1)\ell}{\pi}\int_{-1}^1 Q_{\ell-1}(x)P_{\nu}(x)P_{\nu}(-x)\D x\notag\\={}&\frac{\nu(\nu+1)(2\ell+1)}{\sin(\nu\pi)}\int_{-1}^1 xP_{\ell}(x)\{[P_{\nu}(x)]^2-[P_{\nu}(-x)]^2\}\D x+\frac{4\nu(\nu+1)(2\ell+1)}{\pi}\int_{-1}^1 xQ_{\ell}(x)P_{\nu}(x)P_{\nu}(-x)\D x,\end{align*} as well as{\allowdisplaybreaks\begin{align*}&\frac{4\nu(\nu+1)}{2\ell+1}\int_{-1}^1 [(\ell+1)P_{\ell+1}(x)+\ell P_{\ell-1}(x)]\{[P_{\nu}(x)]^2-[P_{\nu}(-x)]^2\}\D x=4\nu(\nu+1)\int_{-1}^1 xP_{\ell}(x)\{[P_{\nu}(x)]^2-[P_{\nu}(-x)]^2\}\D x\notag\\={}&-\int_{-1}^1P_{\ell}(x)\frac{\D}{\D x}\left\{(1-x^{2})\frac{\D}{\D x}\left[(1-x^{2})\frac{\D\{[P_{\nu}(x)]^2-[P_{\nu}(-x)]^2\}}{\D x}\right]+4\nu(\nu+1)(1-x^{2})[P_{\nu}(x)]^2-4\nu(\nu+1)(1-x^{2})[P_{\nu}(x)]^2\right\}\D x\notag\\={}&\int_{-1}^1\left\{(1-x^{2})\frac{\D}{\D x}\left[(1-x^{2})\frac{\D\{[P_{\nu}(x)]^2-[P_{\nu}(-x)]^2\}}{\D x}\right]+4\nu(\nu+1)(1-x^{2})[P_{\nu}(x)]^2-4\nu(\nu+1)(1-x^{2})[P_{\nu}(x)]^2\right\}\frac{\D P_{\ell}(x)}{\D x}\D x\notag\\&+\frac{8[1+(-1)^{\ell}]\sin^2(\nu\pi)}{\pi^{2}}\notag\\={}&\ell(\ell+1)\int_{-1}^1(1-x^{2})P_{\ell}(x)\frac{\D\{[P_{\nu}(x)]^2-[P_{\nu}(-x)]^2\}}{\D x}\D x+\frac{4\ell(\ell+1)\nu(\nu+1)}{2\ell+1}\int_{-1}^1 [P_{\ell-1}(x)-P_{\ell+1}(x)]\{[P_{\nu}(x)]^2-[P_{\nu}(-x)]^2\}\D x\notag\\&+\frac{8[1+(-1)^{\ell}]\sin^2(\nu\pi)}{\pi^{2}},\end{align*}}which leads to  a recursion relation\begin{align*}&(\ell+1)^2[(\ell+1)^2-(2\nu+1)^2]\int_{-1}^1 P_{\ell+1}(x)\{[P_{\nu}(x)]^2-[P_{\nu}(-x)]^2\}\D x\notag\\{}&-\ell^{2}[\ell^{2}-(2\nu+1)^2]\int_{-1}^1 P_{\ell-1}(x)\{[P_{\nu}(x)]^2-[P_{\nu}(-x)]^2\}\D x=-\frac{8[1+(-1)^{\ell}](2\ell+1)\sin^2(\nu\pi)}{\pi^{2}},\end{align*} and {\allowdisplaybreaks\begin{align*}&\frac{4\nu(\nu+1)}{2\ell+1}\int_{-1}^1 [(\ell+1)Q_{\ell+1}(x)+\ell Q_{\ell-1}(x)]P_{\nu}(x)P_{\nu}(-x)\D x=4\nu(\nu+1)\int_{-1}^1 xQ_{\ell}(x)P_{\nu}(x)P_{\nu}(-x)\D x\notag\\={}&-\int_{-1}^1Q_{\ell}(x)\frac{\D}{\D x}\left\{(1-x^{2})\frac{\D}{\D x}\left[(1-x^{2})\frac{\D(P_{\nu}(x)P_{\nu}(-x))}{\D x}\right]+4\nu(\nu+1)(1-x^{2})P_{\nu}(x)P_{\nu}(-x)\right\}\D x\notag\\={}&\int_{-1}^1\left\{(1-x^{2})\frac{\D}{\D x}\left[(1-x^{2})\frac{\D(P_{\nu}(x)P_{\nu}(-x))}{\D x}\right]+4\nu(\nu+1)(1-x^{2})P_{\nu}(x)P_{\nu}(-x)\right\}\frac{\D Q_{\ell}(x)}{\D x}\D x\notag\\={}&\ell(\ell+1)\int_{-1}^1(1-x^{2})Q_{\ell}(x)\frac{\D(P_{\nu}(x)P_{\nu}(-x))}{\D x}\D x+\frac{4\ell(\ell+1)\nu(\nu+1)}{2\ell+1}\int_{-1}^1 [Q_{\ell-1}(x)-Q_{\ell+1}(x)]P_{\nu}(x)P_{\nu}(-x)\D x\notag\\&-\frac{2[1+(-1)^{\ell}]\sin(\nu\pi)}{\pi},\end{align*}}which brings us another recursion relation\begin{align*}&(\ell+1)^2[(\ell+1)^2-(2\nu+1)^2]\int_{-1}^1 Q_{\ell+1}(x)P_{\nu}(x)P_{\nu}(-x)\D x\notag\\{}&-\ell^{2}[\ell^{2}-(2\nu+1)^2]\int_{-1}^1 Q_{\ell-1}(x)P_{\nu}(x)P_{\nu}(-x)\D x=\frac{2[1+(-1)^{\ell}](2\ell+1)\sin(\nu\pi)}{\pi}.\end{align*}Here, we have used the identities\begin{align*}&\lim_{x\to-1+0^+}(1-x^2 )\frac{\D}{\D x}\left[ 2(1-x^2 )P_{\nu}(x)\frac{\D P_{\nu}(x) }{\D x}\right]=\frac{8\sin^2(\nu\pi)}{\pi^{2}};\quad P_\ell(1)=(-1)^\ell P_\ell(-1)=1,\forall\ell\in\mathbb Z_{\geq0}\end{align*}to take care of boundary contributions to the integral concerning $ P_\ell(x)$, and resorted to  the limit behavior\begin{align*}\lim_{x\to1-0^+}(1-x^{2})^2\frac{\D Q_{\mu}(x)}{\D x}\frac{\D(P_{\nu}(x)P_{\nu}(-x))}{\D x}={}&-\frac{2\sin(\nu\pi)}{\pi}\notag\\\lim_{x\to-1+0^+}(1-x^{2})^2\frac{\D Q_{\mu}(x)}{\D x}\frac{\D(P_{\nu}(x)P_{\nu}(-x))}{\D x}={}&\frac{2\cos(\mu\pi)\sin(\nu\pi)}{\pi}\end{align*} for the integration by parts involving $ Q_\ell(x)$. Therefore, the last line of     Eq.~\ref{eq:P_nu_T_Q} satisfies a homogeneous recursion\begin{align*}&(\ell+1)^2[(\ell+1)^2-(2\nu+1)^2]\left\{\frac{1}{\sin(\nu\pi)}\int_{-1}^1 P_{\ell+1}(x)\{[P_{\nu}(x)]^2-[P_{\nu}(-x)]^2\}\D x+\frac{4}{\pi}\int_{-1}^1 Q_{\ell+1}(x)P_{\nu}(x)P_{\nu}(-x)\D x\right\}\notag\\={}&\ell^{2}[\ell^{2}-(2\nu+1)^2]\left\{\frac{1}{\sin(\nu\pi)}\int_{-1}^1 P_{\ell-1}(x)\{[P_{\nu}(x)]^2-[P_{\nu}(-x)]^2\}\D x+\frac{4}{\pi}\int_{-1}^1 Q_{\ell-1}(x)P_{\nu}(x)P_{\nu}(-x)\D x\right\},\quad \ell\in\mathbb Z_{\geq0}\end{align*} and the truthfulness of    Eq.~\ref{eq:P_nu_T_Q}  for $ \ell=1 $ entails any scenario with a  larger odd number $ \ell$. This completes the verification of Eq.~\ref{eq:P_nu_T} for $ \nu\in\mathbb C\smallsetminus\mathbb Z$.

For a fixed $x\in (-1,1)$, both sides of  Eq.~\ref{eq:P_nu_T} represent continuous functions of $ \nu$, so we may handle Eq.~\ref{eq:P_n_T} by  investigating the  $ \nu\to n$ limit where $n\in\mathbb Z_{\geq0}$. Suppose that $ n$ is even and non-negative, then we have\begin{align*}\lim_{\nu\to n}\frac{[P_{\nu}(x)]^2-[P_{\nu}(-x)]^2}{\sin(\nu\pi)}={}&2P_{n}(x)\lim_{\nu\to n}\frac{P_{\nu}(x)-P_{\nu}(-x)}{\sin(\nu\pi)}=2P_{n}(x)\lim_{\nu\to n}\frac{\cos(\nu\pi)P_{\nu}(x)-P_{\nu}(-x)}{\sin(\nu\pi)}\notag\\={}&\frac{4}{\pi}P_n(x)\lim_{\nu\to n}Q_\nu(x)=\frac{4}{\pi}P_n(x)Q_{n}(x)=\frac{4}{\pi}P_n(-x)Q_{n}(x).\end{align*}If we start from an odd and positive $n$ instead, we will end up with\begin{align*}\lim_{\nu\to n}\frac{[P_{\nu}(x)]^2-[P_{\nu}(-x)]^2}{\sin(\nu\pi)}={}&2P_{n}(x)\lim_{\nu\to n}\frac{P_{\nu}(x)+P_{\nu}(-x)}{\sin(\nu\pi)}=-2P_{n}(x)\lim_{\nu\to n}\frac{\cos(\nu\pi)P_{\nu}(x)-P_{\nu}(-x)}{\sin(\nu\pi)}\notag\\={}&-\frac{4}{\pi}P_n(x)\lim_{\nu\to n}Q_\nu(x)=-\frac{4}{\pi}P_n(x)Q_{n}(x)=\frac{4}{\pi}P_n(-x)Q_{n}(x).\end{align*}Hence,  we have proved \begin{align*}\mathcal
P\int_{-1}^1\frac{2P_{n}(\xi)P_{n}(-\xi)}{\pi(x-\xi)}\D \xi=\mathcal
P\int_{-1}^1\frac{2P_{-n-1}(\xi)P_{-n-1}(-\xi)}{\pi(x-\xi)}\D \xi=(-1)^{n}\frac{4}{\pi}P_n(x)Q_n(x)=\frac{4}{\pi}P_n(-x)Q_n(x),\quad n\in\mathbb Z_{\geq0},
\end{align*}which is equivalent to Eq.~\ref{eq:P_n_T}. In fact,  Eq.~\ref{eq:P_n_T} is not particularly surprising. It is just a special case of the stronger statement that (cf.~\cite[Ref.][Eqs.~11.280 and 11.281]{KingVol1} or  \cite[Ref.][Table 1.12A, Eq.~12A.27]{KingVol2})
\begin{align}
\label{eq:Pn_p_T}
\mathcal P\int_{-1}^1\frac{P_{n}(\xi)p(\xi)}{2(x-\xi)}\D \xi=Q_{n}(x)p(x),\quad \deg p(x)\leq n,
\end{align}
 where $ p(x)$ is any polynomial whose degree does not exceed $n$.
\qed\end{proof}

  We follow up with some additional examples involving the product of four elliptic integrals in the integrands.
\begin{corollary}[Another Application of Tricomi Pairing]We have an integral identity\begin{align}
\int_{-1}^1 x[P_\nu(x)]^3P_\nu(-x)\D x=\frac{\sin(2\nu\pi)\cos(\nu\pi)}{(2\nu+1)^{2}\pi},\quad \nu\in\mathbb C\smallsetminus\{-1/2\},\label{eq:xPPPP}
\end{align}which leads to {\allowdisplaybreaks\begin{align}-\frac{\pi}{2}={}&\int_{-1}^1 x[P_{-1/2}(x)]^3P_{-1/2}(-x)\D x\notag\\={}&\frac{32}{\pi^{4}}\int_0^1(1-2t)[\mathbf K(\sqrt{t})]^3\mathbf K(\sqrt{1-t})\D t=-\frac{32}{\pi^{4}}\int_0^1(1-2t)[\mathbf K(\sqrt{1-t})]^3\mathbf K(\sqrt{t})\D t,\label{eq:xPPPP_half}\\-\frac{9 \sqrt{3}}{4 \pi }={}&\int_{-1}^1 x[P_{-1/3}(x)]^3P_{-1/3}(-x)\D x\notag\\={}&\frac{216}{\sqrt{3}\pi^{4}}\int_0^1\frac{(1-p^2)p(2+p)}{1+2p}\left[1-2\frac{27p^2(1+p)^2}{4(1+p+p^2)^3}\right]\left[ \mathbf K\left( \sqrt{\frac{p^{3}(2+p)}{1+2p}} \right) \right]^3\mathbf K\left( \sqrt{1-\frac{p^{3}(2+p)}{1+2p}} \right)\D p\notag\\={}&-\frac{72}{\sqrt{3}\pi^{4}}\int_0^1\frac{(1-p^2)p(2+p)}{1+2p}\left[1-2\frac{27p^2(1+p)^2}{4(1+p+p^2)^3}\right]\left[ \mathbf K\left( \sqrt{1-\frac{p^{3}(2+p)}{1+2p}} \right) \right]^3\mathbf K\left( \sqrt{\frac{p^{3}(2+p)}{1+2p}} \right)\D p,\label{eq:xPPPP_third}\\-\frac{2 \sqrt{2}}{\pi}={}&\int_{-1}^1 x[P_{-1/4}(x)]^3P_{-1/4}(-x)\D x\notag\\={}&\frac{32\sqrt{2}}{\pi^4}\int_0^1\frac{1-2u}{(1+\sqrt{u})^2}\left[ \mathbf K\left( \sqrt{\frac{2\sqrt{\vphantom{1}u}}{1+\sqrt{\vphantom{1}u}}} \right) \right]^3\mathbf K\left( \sqrt{\frac{1-\sqrt{\vphantom{1}u}}{1+\sqrt{\vphantom{1}u}}} \right)\D u\notag\\={}&-\frac{64\sqrt{2}}{\pi^4}\int_0^1\frac{1-2u}{(1+\sqrt{u})^2}\left[ \mathbf K\left( \sqrt{\frac{1-\sqrt{\vphantom{1}u}}{1+\sqrt{\vphantom{1}u}}} \right) \right]^3\mathbf K\left( \sqrt{\frac{2\sqrt{\vphantom{1}u}}{1+\sqrt{\vphantom{1}u}}} \right)\D u,\label{eq:xPPPP_quarter}\\-\frac{27}{16\pi}={}&\int_{-1}^1 x[P_{-1/6}(x)]^3P_{-1/6}(-x)\D x\notag\\={}&\frac{54}{\pi^{4}}\int_0^1\frac{t(1-t)(1+t)(2-t)(1-2t)}{(1-t+t^2)^3}[\mathbf K(\sqrt{t})]^3\mathbf K(\sqrt{1-t})\D t\notag\\={}&-\frac{54}{\pi^{4}}\int_0^1\frac{t(1-t)(1+t)(2-t)(1-2t)}{(1-t+t^2)^3}[\mathbf K(\sqrt{1-t})]^3\mathbf K(\sqrt{t})\D t.\label{eq:xPPPP_sixth}\end{align}} \end{corollary}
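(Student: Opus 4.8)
The plan is to prove the master identity Eq.~\ref{eq:xPPPP} by Tricomi pairing, exactly as the corollary's title advertises, and then to read off Eqs.~\ref{eq:xPPPP_half}--\ref{eq:xPPPP_sixth} through the Kleiber--Ramanujan substitutions recorded in Lemma~\ref{lm:P_nu_spec}. Write $I:=\int_{-1}^1 x[P_\nu(x)]^3P_\nu(-x)\,\D x$. First I would exploit the reflection $x\mapsto-x$, which swaps $P_\nu(x)\leftrightarrow P_\nu(-x)$ while negating $x$, to see that $\int_{-1}^1 x\,P_\nu(x)[P_\nu(-x)]^3\,\D x=-I$. Subtracting gives
\begin{align*}
\int_{-1}^1 x\,P_\nu(x)P_\nu(-x)\bigl\{[P_\nu(x)]^2-[P_\nu(-x)]^2\bigr\}\,\D x=2I ,
\end{align*}
so the target is packaged against the odd combination $[P_\nu(x)]^2-[P_\nu(-x)]^2$ that already appears on the right-hand side of the Tricomi transform formula Eq.~\ref{eq:P_nu_T}.

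Next I would apply the Parseval identity Eq.~\ref{eq:Tricomi_Parseval1} to the pair $f(\xi):=2P_\nu(\xi)P_\nu(-\xi)$ and $g(\xi):=\xi P_\nu(\xi)P_\nu(-\xi)$. Both carry only logarithmic endpoint singularities (inherited from $P_\nu$ near $-1$), hence lie in every $L^p(-1,1)$ with $p<\infty$; taking $p=q=3$ legitimizes the pairing. By Eq.~\ref{eq:P_nu_T}, $(\widehat{\mathcal T}f)(x)=\{[P_\nu(x)]^2-[P_\nu(-x)]^2\}/\sin(\nu\pi)$, so the previous display yields $\int_{-1}^1 g\,(\widehat{\mathcal T}f)\,\D x=2I/\sin(\nu\pi)$. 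For the companion term I would compute $\widehat{\mathcal T}g$ via the elementary splitting $\xi/(x-\xi)=-1+x/(x-\xi)$, which reduces it to the already-known transform of $P_\nu(\xi)P_\nu(-\xi)$ plus a zeroth moment:
\begin{align*}
(\widehat{\mathcal T}g)(x)=-\frac{T_{0,\nu}}{\pi}+x\,\frac{[P_\nu(x)]^2-[P_\nu(-x)]^2}{2\sin(\nu\pi)},\qquad T_{0,\nu}=\frac{2\cos(\nu\pi)}{2\nu+1},
\end{align*}
the constant coming from Eq.~\ref{eq:T_0_nu}. Multiplying by $f$ and integrating, the $x(\cdots)$ term reproduces $2I/\sin(\nu\pi)$ while the constant term contributes $-2T_{0,\nu}^2/\pi$.

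The pairing then closes on itself. Feeding both integrals into Eq.~\ref{eq:Tricomi_Parseval1} gives
\begin{align*}
\frac{2I}{\sin(\nu\pi)}+\left[\frac{2I}{\sin(\nu\pi)}-\frac{2T_{0,\nu}^{2}}{\pi}\right]=0 ,
\end{align*}
so the two copies of $I$ \emph{add} rather than cancel, leaving $4I/\sin(\nu\pi)=2T_{0,\nu}^2/\pi$. Solving and inserting $T_{0,\nu}^2=4\cos^2(\nu\pi)/(2\nu+1)^2$ produces $I=2\sin(\nu\pi)\cos^2(\nu\pi)/[\pi(2\nu+1)^2]$, which is Eq.~\ref{eq:xPPPP} after $\sin(2\nu\pi)=2\sin(\nu\pi)\cos(\nu\pi)$. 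The four special cases follow by setting $\nu=-\tfrac12,-\tfrac13,-\tfrac14,-\tfrac16$ and translating $P_\nu$ into $\mathbf K$ through Eqs.~\ref{eq:P_half_sin2}, \ref{eq:P_third_a}--\ref{eq:P_third_b}, \ref{eq:P_quarter_sin4} and \ref{eq:P_sixth_unified}, the same dictionary already used for Eqs.~\ref{eq:T_half_half}--\ref{eq:T_sixth_sixth}; the excluded $\nu=-\tfrac12$ is recovered as the removable limit $\lim_{\nu\to-1/2}\sin(2\nu\pi)\cos(\nu\pi)/[(2\nu+1)^2\pi]=-\pi/2$.

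The only genuine subtlety — and the step I would verify most carefully — is the sign bookkeeping that makes the two $I$-contributions reinforce instead of annihilate. This hinges on the odd parity of $[P_\nu(x)]^2-[P_\nu(-x)]^2$ together with the $-1$ generated by splitting $\xi/(x-\xi)$, and it is precisely this non-cancellation that turns the self-referential Parseval relation into a solvable linear equation for $I$ rather than the vacuous identity $0=0$.
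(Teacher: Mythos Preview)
Your proof is correct and follows essentially the same approach as the paper: the same Tricomi pairing of $P_\nu(\xi)P_\nu(-\xi)$ against $\xi P_\nu(\xi)P_\nu(-\xi)$, the same reduction of $(\widehat{\mathcal T}g)$ via $\xi/(x-\xi)=-1+x/(x-\xi)$ together with $T_{0,\nu}=2\cos(\nu\pi)/(2\nu+1)$, and the same parity observation $2I=\int_{-1}^1 xP_\nu(x)P_\nu(-x)\{[P_\nu(x)]^2-[P_\nu(-x)]^2\}\,\D x$. The paper's write-up differs only cosmetically in the order of presentation; your explicit check of the integrability condition $f,g\in L^3(-1,1)$ and of the removable limit at $\nu=-1/2$ are welcome additions.
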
\begin{proof}For integer degrees $ \nu\in\mathbb Z$, the left-hand side of  Eq.~\ref{eq:xPPPP} represents the integration of an odd function over the interval $ [-1,1]$, hence vanishing. This is consistent with the corresponding behavior  on the right-hand side.

We now turn our attention to  the scenarios where $ \nu\in\mathbb C\smallsetminus(\mathbb Z\cup\{-1/2\})$. From the Tricomi transform formula in Eq.~\ref{eq:P_nu_T}, one may readily deduce the following relation\begin{align*}\mathcal
P\int_{-1}^1\frac{2\xi P_{\nu}(\xi)P_{\nu}(-\xi)}{\pi(x-\xi)}\D \xi=\mathcal
P\int_{-1}^1\frac{2[x-(x-\xi)] P_{\nu}(\xi)P_{\nu}(-\xi)}{\pi(x-\xi)}\D \xi=x\frac{[P_{\nu}(x)]^2-[P_{\nu}(-x)]^2}{\sin(\nu\pi)}-\frac{2}{\pi}\int_{-1}^{1}P_{\nu}(\xi)P_{\nu}(-\xi)\D \xi.\end{align*} Here, according to Eq.~\ref{eq:T_0_nu}, we have\begin{align*}T_{0,\nu}=\int_{-1}^{1}P_{\nu}(\xi)P_{\nu}(-\xi)\D \xi=\frac{2\cos(\nu\pi)}{2\nu+1}, \end{align*} so we may combine the formula \begin{align*}\mathcal
P\int_{-1}^1\frac{2\xi P_{\nu}(\xi)P_{\nu}(-\xi)}{\pi(x-\xi)}\D \xi=x\frac{[P_{\nu}(x)]^2-[P_{\nu}(-x)]^2}{\sin(\nu\pi)}-\frac{4\cos(\nu\pi)}{(2\nu+1)\pi}\end{align*}with  Eq.~\ref{eq:P_nu_T} for the implementation of the following Tricomi pairing:\begin{align*}\int_{-1}^1x P_\nu(x)P_\nu(-x)\{[P_\nu(x)]^2-[P_\nu(-x)]^2\}\D x={}&\int_{-1}^1x P_\nu(x)P_\nu(-x)\left[\mathcal P\int_{-1}^1\frac{2 P_{\nu}(\xi)P_{\nu}(-\xi)\sin(\nu\pi)}{\pi(x-\xi)}\D \xi\right]\D x\notag\\={}&-\int_{-1}^1\left[ \mathcal
P\int_{-1}^1\frac{2\xi P_{\nu}(\xi)P_{\nu}(-\xi)\sin(\nu\pi)}{\pi(x-\xi)}\D \xi \right]P_\nu(x)P_\nu(-x)\D x\notag\\={}&-\int_{-1}^1x P_\nu(x)P_\nu(-x)\{[P_\nu(x)]^2-[P_\nu(-x)]^2\}\D x+\frac{2\sin(2\nu\pi)}{(2\nu+1)\pi}\int _{-1}^1P_\nu(x)P_\nu(-x)\D x.\end{align*}After rearrangement, we obtain\begin{align*}4\int_{-1}^1 x[P_\nu(x)]^3P_\nu(-x)\D x=2\int_{-1}^1x P_\nu(x)P_\nu(-x)\{[P_\nu(x)]^2-[P_\nu(-x)]^2\}\D x=\frac{2\sin(2\nu\pi)}{(2\nu+1)\pi}\int _{-1}^1P_\nu(x)P_\nu(-x)\D x,\end{align*} which entails the claimed identity in Eq.~\ref{eq:xPPPP}.

The left-hand side of  Eq.~\ref{eq:xPPPP} extends to be a continuous function in $ \nu\in\mathbb C$.  Taking the $ \nu\to-1/2$ limit, one arrives at Eq.~\ref{eq:xPPPP_half}. The special cases $ \nu=-1/3,-1/4,-1/6$ correspond to Eqs.~\ref{eq:xPPPP_third}, \ref{eq:xPPPP_quarter} and \ref{eq:xPPPP_sixth}. \qed\end{proof}

A key step in the proof of Proposition~\ref{prop:Tricomi_Paring} hinges on the identity\begin{align*}\int_0^1\left[\mathbf K \left(\sqrt{1-k^2} \right)\right]^{3}\D k=3\int_0^1[\mathbf K(k)]^2\mathbf K\left( \sqrt{1-k^2} \right)\D k,\quad \textit{i.e. }\int_{-1}^1\frac{[P_{-1/2}(x)]^3}{\sqrt{1+x}}\D x=3\int_{-1}^1\frac{P_{-1/2}(x)[P_{-1/2}(-x)]^2}{\sqrt{1+x}}\D x.\end{align*}This result is actually just a special case within a family of identities satisfied by multiple elliptic integrals involving the product of three complete elliptic integrals (of the first and second kinds). To flesh out, for any integer $ n\in\mathbb Z$, we have\begin{align}
\mathcal P\int_{-1}^1\frac{P_{(2n+1)/2}(\xi)}{\sqrt{1+\xi}}\frac{\D \xi}{2(x-\xi)}=\frac{Q_{(2n+1)/2}(x)}{\sqrt{1+x}}\equiv(-1)^{n+1}\frac{\pi}{2}\frac{P_{(2n+1)/2}(-x)}{\sqrt{1+x}},\quad \forall x\in(-1,1),\label{eq:P_half_int_T}
\end{align}which  generalizes Corollary~\ref{cor:Beltrami_app}\ref{itm:Tricomi_P_pm_half}, and Eq.~\ref{eq:P_half_int_T} entails \begin{align}
\int_{-1}^1\frac{[P_{(2n+1)/2}(x)]^3}{\sqrt{1+x}}\D x=3\int_{-1}^1\frac{P_{(2n+1)/2}(x)[P_{(2n+1)/2}(-x)]^2}{\sqrt{1+x}}\D x\label{eq:triple_law}
\end{align}after Tricomi pairing with Eq.~\ref{eq:P_nu_T}. In particular, for $ n=0$, Eq.~\ref{eq:triple_law} specializes to  \begin{align*}\int_0^1\left[2\mathbf E\left(\sqrt{1-k^2} \right)-\mathbf K \left(\sqrt{1-k^2} \right)\right]^{3}\D k=3\int_0^1[2\mathbf E(k)-\mathbf K(k)]^2 \left[2\mathbf E\left(\sqrt{1-k^2} \right)-\mathbf K \left(\sqrt{1-k^2} \right)\right]\D k.\end{align*}

The identity stated in   Eq.~\ref{eq:P_half_int_T} is the Tricomi transform of $ P_{(2n+1)/2}(x)/\sqrt{1+x}\equiv P_{-(2n+3)/2}(x)/\sqrt{1+x},n\in\mathbb Z$, which extends the Neumann integral representation of Legendre functions $ Q_\ell$ for $\ell\in\mathbb Z_{\geq0}$ (Eq.~\ref{eq:Neumann_int_repn}). In the next proposition, we shall prove    Eq.~\ref{eq:P_half_int_T} in an even broader context, which in turn, also gives rise to an independent verification of Eq.~\ref{eq:P_nu_T} in Proposition~\ref{prop:P_nu_T}.
\begin{proposition}[Generalized Neumann Integrals]
For $ n\in\mathbb Z_{\geq0}$ and $ \R(\nu-n)>-1$, one has\begin{align}
\mathcal
P\int_{-1}^1(1+\xi)^{\nu-n} P_{\nu}(\xi)\frac{\D \xi}{2(x-\xi)}={}&(1+x)^{\nu-n} Q_{\nu}(x),\quad -1<x<1.\label{eq:P_nu_T00}
\end{align}\end{proposition}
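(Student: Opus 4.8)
The plan is to collapse the whole family in $n$ onto the single case $n=0$ by a weight-lowering recursion, and then to settle that base case by identifying the integral as a boundary value of a Cauchy transform. Throughout I write $I_n(x):=\mathcal P\int_{-1}^1(1+\xi)^{\nu-n}P_\nu(\xi)\frac{\D\xi}{2(x-\xi)}$, so that the assertion is $I_n(x)=(1+x)^{\nu-n}Q_\nu(x)$.

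First I would derive the recursion relating consecutive $n$. Using $(1+\xi)^{\nu-n}-(1+\xi)^{\nu-n-1}=\xi\,(1+\xi)^{\nu-n-1}$ together with $\frac{\xi}{x-\xi}=\frac{x}{x-\xi}-1$, one finds
\[
I_n(x)=(1+x)I_{n+1}(x)-\frac12\int_{-1}^1(1+\xi)^{\nu-n-1}P_\nu(\xi)\,\D\xi .
\]
The constant here can be evaluated in closed form from the classical integral $\int_{-1}^1(1+\xi)^{\sigma}P_\nu(\xi)\,\D\xi=\frac{2^{\sigma+1}[\Gamma(\sigma+1)]^2}{\Gamma(\sigma+\nu+2)\Gamma(\sigma-\nu+1)}$; taking $\sigma=\nu-n-1$ makes the denominator factor $\Gamma(\sigma-\nu+1)=\Gamma(-n)$, whose pole at $n\in\mathbb Z_{\geq0}$ forces the constant to vanish. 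Hence $I_n(x)=(1+x)I_{n+1}(x)$ on the admissible range $\R(\nu-n)>-1$. Since the admissibility condition $\R(\nu-n)>-1$ is monotone in $n$, the entire admissible chain hangs off $n=0$: an upward induction (dividing by $1+x\neq0$) shows that $I_0(x)=(1+x)^{\nu}Q_\nu(x)$ propagates to $I_n(x)=(1+x)^{\nu-n}Q_\nu(x)$ for every valid $n$.

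It then remains to prove the base case $I_0(x)=(1+x)^{\nu}Q_\nu(x)$ for $\R\nu>-1$. I would introduce the Cauchy transform $\Psi(z)=\frac12\int_{-1}^1\frac{(1+t)^{\nu}P_\nu(t)}{z-t}\,\D t$, analytic on $\mathbb C\setminus[-1,1]$, and compare it with $\Phi(z)=(1+z)^{\nu}Q_\nu(z)$. The structural point that makes this work is that, although $(1+z)^{\nu}$ and $Q_\nu(z)$ are each branched at $z=-1$, their monodromies about the whole segment $[-1,1]$ cancel: a loop encircling the segment multiplies $(1+z)^{\nu}$ by $e^{2\pi i\nu}$ and, through $Q_\nu(z)\sim C z^{-\nu-1}$ at infinity, multiplies $Q_\nu(z)$ by $e^{-2\pi i\nu}$. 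Thus $\Phi$ possesses a single-valued branch on $\mathbb C\setminus[-1,1]$, sharing the domain of $\Psi$. Setting $D:=\Psi-\Phi$, the Plemelj formula gives $\Psi^+-\Psi^-=-i\pi(1+x)^{\nu}P_\nu(x)$ on $(-1,1)$, while the connection relation $Q_\nu(x+i0)-Q_\nu(x-i0)=-i\pi P_\nu(x)$ yields the identical jump for $\Phi$; hence $D$ extends analytically across $(-1,1)$. Both $\Psi$ and $\Phi$ are $O(1/z)$ at infinity, so $D\to0$ there. Once the singularities at $z=\pm1$ are shown to be removable, Liouville forces $D\equiv0$; averaging the two boundary values of $\Phi$ across $(-1,1)$ — using that $\tfrac12[Q_\nu(x+i0)+Q_\nu(x-i0)]$ is exactly the on-cut function of Eq.~\ref{eq:def_Qnu} and that $(1+x)^{\nu}$ is continuous there — recovers $\mathcal P\int_{-1}^1(1+\xi)^{\nu}P_\nu(\xi)\frac{\D\xi}{2(x-\xi)}=(1+x)^{\nu}Q_\nu(x)$.

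The main obstacle is the endpoint analysis at $z=-1$. At $z=1$ the matching is transparent: both $\Psi$ and $\Phi$ carry the same $-\tfrac{2^{\nu}}{2}\log(z-1)$ singularity, since the density satisfies $(1+1)^{\nu}P_\nu(1)=2^{\nu}$ and $Q_\nu(z)\sim-\tfrac12\log\frac{z-1}{2}$. At $z=-1$, however, $P_\nu(t)$ is logarithmically singular, $P_\nu(t)\sim-\frac{\sin(\nu\pi)}{\pi}\log\frac{1+t}{2}$, while the weight supplies a factor $(1+t)^{\nu}$; consequently $\Psi$ develops coupled $(1+z)^{\nu}$ and $(1+z)^{\nu}\log(1+z)$ terms that must be shown to cancel, coefficient by coefficient, against the corresponding expansion of $(1+z)^{\nu}Q_\nu(z)$, where $Q_\nu$ is itself logarithmically singular at $-1$. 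Establishing that these two singular expansions coincide — so that $D$ is bounded, hence removable, at $z=-1$ — is the delicate step; it is handled with the standard connection formulae for $P_\nu$ and $Q_\nu$ near $x=-1$ and the known asymptotics of a Cauchy transform whose density has power-times-logarithm endpoint behavior. Finally, for integer degree $\nu=m\in\mathbb Z_{\geq0}$ the base case is immediate and serves as a consistency check: Eq.~\ref{eq:Pn_p_T} applied to the degree-$m$ polynomial $p(\xi)=(1+\xi)^{m}$ gives $\mathcal P\int_{-1}^1\frac{P_m(\xi)(1+\xi)^m}{2(x-\xi)}\,\D\xi=(1+x)^{m}Q_m(x)$ directly.
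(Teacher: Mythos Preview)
Your argument is correct and takes a genuinely different route from the paper. The reduction to $n=0$ via the telescoping identity $I_n=(1+x)I_{n+1}$, with the constant killed by the pole of $\Gamma(-n)$, is clean and not in the paper; the paper instead carries a Mellin--Barnes representation of the Tricomi transform (obtained by inverting Eq.~\ref{eq:alg_P_nu_int}), checks directly that it satisfies the Legendre equation, and then identifies the solution by computing two moments against $1$ and $1+x$ for $-\tfrac12<\nu<0$. The general $n$ is handled there by shifting the vertical Mellin contour and observing that the apparent poles at $s=\nu-k$ are removable.

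One remark on your base case: the endpoint analysis at $z=-1$ is less delicate than you suggest. You do not need to match the singular expansions term by term. Since $D=\Psi-\Phi$ is single-valued and analytic in a punctured neighbourhood of $-1$, it suffices to bound its growth by $o(|1+z|^{-1})$ and invoke the Cauchy estimates on the Laurent coefficients. For $\Psi$, the density $(1+t)^{\nu}P_\nu(t)$ is $O((1+t)^{\R\nu-\epsilon})$ near $t=-1$, so standard Cauchy-integral endpoint bounds give $\Psi(z)=O(|1+z|^{\min(\R\nu,0)-\epsilon})$; for $\Phi$, the factor $(1+z)^{\nu}$ times the logarithmic singularity of $Q_\nu$ gives the same order. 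With $\R\nu>-1$ this is $o(|1+z|^{-1})$, forcing removability. Your monodromy check that $(1+z)^{\nu}Q_\nu(z)$ is single-valued on $\mathbb C\smallsetminus[-1,1]$ is correct and is the real structural point.

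What each approach buys: the paper's Mellin method is computationally explicit and yields the Legendre ODE for free, at the cost of the moment bookkeeping; your Riemann--Hilbert argument is more geometric and makes the role of the weight $(1+\xi)^{\nu-n}$ transparent (it is exactly what is needed to cancel the monodromy of $Q_\nu$), at the cost of tracking boundary values of the off-cut $Q_\nu$.
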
\begin{proof} We note that there is a  standard moment  formula for Legendre functions \cite[Ref.][item~7.127]{GradshteynRyzhik}:\begin{align}\int_{-1}^1(1+x)^\sigma P_\nu(x)\D x=\frac{2^{\sigma+1}[\Gamma(\sigma+1)]^2}{\Gamma(\sigma+\nu+2)\Gamma(1+\sigma-\nu)},\quad \R\sigma>-1.\label{eq:alg_P_nu_int}\end{align}{One can verify  Eq.~\ref{eq:alg_P_nu_int} by termwise integration over the Taylor series expansion for $ P_{\nu}(x)={_2}F_1\left( \left.\begin{smallmatrix}-\nu,\nu+1\\1\end{smallmatrix}\right| \frac{1-x}{2}\right)$ with respect to $ (1-x)/2$ and a reduction of the generalized hypergeometric series $ _3F_2$.}

We now consider the Mellin inversion of Eq.~\ref{eq:alg_P_nu_int}:\begin{align}
\frac{1}{2\pi i}\int_{c-i\infty}^{c+i\infty}\frac{[\Gamma(s)]^22^s}{\Gamma(s+\nu+1)\Gamma(s-\nu)}\frac{\D s}{(1+\xi)^s}=\begin{cases}P_{\nu}(\xi), & -1<\xi<1 \\
0, & \xi>1 \\
\end{cases}\label{eq:Mellin_Pnu}
\end{align}where $ c>0$ and the integration $ \int_{c-i\infty}^{c+i\infty}:=\lim_{T\to+\infty}\int_{c-iT}^{c+iT}$ is carried out along  a vertical line $ \R s=c$.  

To facilitate analysis, we momentarily assume that $ \R(\nu-n)>-\frac{1}{2}$ and pick $ c=\R(\nu -n)+\frac{1}{2}$, so that $ c>0$ and $ \R(\nu-n-c)=-\frac{1}{2}$.
For $-1<x<1$, one may enlist   Eq.~\ref{eq:Mellin_Pnu} to compute\begin{align}\mathcal
P\int_{-1}^1\frac{(1+\xi)^{\nu-n}P_{\nu}(\xi)\D \xi}{2(x-\xi)}={}&\frac{1}{2\pi i}\int_{\R(\nu -n)+\frac{1}{2}-i\infty}^{\R(\nu -n)+\frac{1}{2}+i\infty}\frac{[\Gamma(s)]^22^s}{\Gamma(s+\nu+1)\Gamma(s-\nu)}\left[\lim_{\varepsilon\to0^+}\int_{-1}^\infty\left( \frac{1}{x-\xi+i\varepsilon} +\frac{1}{x-\xi- i\varepsilon}\right)\frac{(1+\xi)^{\nu-n-s}\D \xi}{4}\right]\D s\notag\\={}&\frac{(1+x)^{\nu-n}}{4i}\int_{\R(\nu -n)+\frac{1}{2}-i\infty}^{\R(\nu -n)+\frac{1}{2}+i\infty}\frac{[\Gamma(s)]^22^{s}\cot(\nu\pi-s\pi)}{\Gamma(s+\nu+1)\Gamma(s-\nu)}\frac{\D s}{(1+x)^s},\label{eq:Mellin_Tricomi}\end{align} where the integration over $ \xi\in(-1,\infty)$ is an elementary exercise in complex analysis.
From Eq.~\ref{eq:Mellin_Tricomi}, it is straightforward to verify the Legendre differential equation:\begin{align*}&\frac{\D}{\D x}\left[ (1-x^2)\frac{\D}{\D x} \mathcal
P\int_{-1}^1\frac{(1+\xi)^{\nu-n}P_{\nu}(\xi)\D \xi}{2(1+x)^{\nu-n}(x-\xi)}\right]+\nu(\nu+1)\mathcal P\int_{-1}^1\frac{(1+\xi)^{\nu-n}P_{\nu}(\xi)\D \xi}{2(1+x)^{\nu-n}(x-\xi)}\notag\\={}&\frac{1}{4i}\left\{\int_{\R(\nu -n)+\frac{1}{2}-i\infty}^{\R(\nu -n)+\frac{1}{2}+i\infty}\frac{[\Gamma(s)]^22^{s}\cot(\nu\pi-s\pi)}{\Gamma(s+\nu+1)\Gamma(s-\nu)}\frac{2s^{2}\D s}{(1+x)^{s+1}}-\int_{\R(\nu -n)+\frac{1}{2}-i\infty}^{\R(\nu -n)+\frac{1}{2}+i\infty}\frac{[\Gamma(s)]^22^{s}\cot(\nu\pi-s\pi)}{\Gamma(s+\nu+1)\Gamma(s-\nu)}\frac{(s+\nu)(s-\nu-1)\D s}{(1+x)^{s}}\right\}\notag\\={}&\frac{1}{4i}\left(\int_{\R(\nu -n)+\frac{1}{2}-i\infty}^{\R(\nu -n)+\frac{1}{2}+i\infty}-\int_{\R(\nu -n)-\frac{1}{2}-i\infty}^{\R(\nu -n)-\frac{1}{2}+i\infty}\right)\frac{[\Gamma(s)]^22^{s}\cot(\nu\pi-s\pi)}{\Gamma(s+\nu+1)\Gamma(s-\nu)}\frac{2s^{2}\D s}{(1+x)^{s+1}}=0\end{align*}by noting that $ s=0$ and $ s=\nu-n$ are both removable singularities for the integrand in the last line.  Thus, we are sure that the left-hand side of Eq.~\ref{eq:P_nu_T00} can be written as $ (1+x)^{\nu-n}[a_{\nu,n} P_\nu(x)+b_{\nu,n} Q_\nu(x)]$ for some coefficients $ a_{\nu,n}$ and $ b_{\nu,n}$, provided   that $ \R(\nu-n)>-\frac{1}{2}$. 

 Instead of directly coping with  $ a_{\nu,n}$ and $ b_{\nu,n}$ for generic $\nu$ and $n$, we impose a temporary constraint that $ n=0,-\frac{1}{2}<\nu<0$. Again, by  Eq.~\ref{eq:Mellin_Tricomi}, we can compute two moment integrals\begin{align*}&\int_{-1}^1[a_{\nu,0} P_\nu(x)+b_{\nu,0} Q_\nu(x)]\D x=\frac{1}{2\pi i}\int_{\nu+\frac{1}{2}-i\infty}^{\nu+\frac{1}{2}+i\infty}\frac{[\Gamma(s)]^2\pi\cot(\nu\pi-s\pi)}{\Gamma(s+\nu+1)\Gamma(s-\nu)}\frac{\D s}{1-s}\notag\\={}&\frac{\cos(\nu\pi)}{\nu(\nu+1)}+\sum _{m=1}^{\infty } \frac{[\Gamma (m+\nu )]^2}{(1-m-\nu ) \Gamma (m) \Gamma (m+2 \nu +1)}=-\frac{2}{\nu(\nu+1)}\sin^2\frac{\nu\pi}{2};\notag\\&\int_{-1}^1[a_{\nu,0} P_\nu(x)+b_{\nu,0} Q_\nu(x)](1+x)\D x=\frac{1}{2\pi i}\int_{\nu+\frac{1}{2}-i\infty}^{\nu+\frac{1}{2}+i\infty}\frac{[\Gamma(s)]^2\pi\cot(\nu\pi-s\pi)}{\Gamma(s+\nu+1)\Gamma(s-\nu)}\frac{2\D s}{2-s}\notag\\={}&-\frac{2\cos(\nu\pi)}{(\nu-1 ) \nu  (\nu +1) (\nu +2)}+\sum _{m=1}^{\infty } \frac{2[ \Gamma (m+\nu )]^2}{(2-m-\nu) \Gamma (m) \Gamma (m+2 \nu +1)}=-\frac{2 [\nu ^2+\nu +\cos (\pi  \nu )-1]}{(\nu -1) \nu  (\nu +1) (\nu +2)}\end{align*}by closing the contours to the right. These two moment integrals reveal that $ a_{\nu,0}=0,b_{\nu,0}=1$ for $  -\frac{1}{2}<\nu<0$. For any fixed $ x\in(-1,1)$,  the integral\begin{align*}\int_{\R\nu+\frac{1}{2}-i\infty}^{\R\nu+\frac{1}{2}+i\infty}\frac{[\Gamma(s)]^22^{s}\cot(\nu\pi-s\pi)}{\Gamma(s+\nu+1)\Gamma(s-\nu)}\frac{\D s}{4i(1+x)^s}=\frac{1}{(1+x)^{\nu}}\mathcal
P\int_{-1}^1\frac{(1+\xi)^{\nu}P_{\nu}(\xi)\D \xi}{2(x-\xi)}\end{align*}is analytic for  $ \R\nu>-\frac{1}{2}$ and is equal to  $ Q_\nu(x)$ for  $  -\frac{1}{2}<\nu<0$, so it can be identified with $ Q_\nu(x)$ for $ \R\nu>-\frac{1}{2}$. Moreover, so long as $ \R(\nu-n)>-\frac{1}{2}$ for some $ n\in\mathbb Z_{\geq0}$, we have \begin{align*}\left(\int_{\R(\nu -n)+\frac{1}{2}-i\infty}^{\R(\nu -n)+\frac{1}{2}+i\infty}-\int_{\R\nu+\frac{1}{2}-i\infty}^{\R\nu +\frac{1}{2}+i\infty}\right)\frac{[\Gamma(s)]^22^{s}\cot(\nu\pi-s\pi)}{\Gamma(s+\nu+1)\Gamma(s-\nu)}\frac{\D s}{(1+x)^s}=0,\end{align*}as the ratio $\cot(\nu\pi-s\pi)/\Gamma(s-\nu) $ remains bounded at all the removable singularities enclosed in the contour. This allows us to simplify  Eq.~\ref{eq:Mellin_Tricomi} into Eq.~\ref{eq:P_nu_T00} for $ \R(\nu-n)>-\frac{1}{2},n\in\mathbb Z_{\geq0}$.      
  By analytic continuation in $ \nu$, one can extend the applicability to $ \R(\nu-n)>-1,n\in\mathbb Z_{\geq0}$.\qed\end{proof}\begin{remark}

One may well recognize that  Eq.~\ref{eq:P_nu_T00}  incorporates the  classical result in Eq.~\ref{eq:Pn_p_T} as a special case:\[\mathcal P\int_{-1}^1\frac{P_{n}(\xi)p(\xi)}{2(x-\xi)}\D \xi=Q_{n}(x)p(x),\quad \deg p(x)\leq n\in\mathbb Z_{\geq0}.\]

To deduce  Eq.~\ref{eq:P_nu_T} from  Eq.~\ref{eq:P_nu_T00}, we need  the Hardy-Poincar\'e-Bertrand formula (see \cite[Ref.][\S4.3, Eq.~4]{TricomiInt} or \cite[Ref.][Eq.~11.52]{KingVol1}):
\begin{align}
\widehat{\mathcal T}[f(\widehat{\mathcal T}g)+g(\widehat{\mathcal T}f)]=(\widehat{\mathcal T}f)(\widehat{\mathcal T}g)-fg,\label{eq:Tricomi_convolution}
\end{align}which applies to the scenarios where  $ f\in L^p(-1,1),p>1$; $ g\in L^q(-1,1),q>1$ and $ \frac1p+\frac1q<1$. By   Eqs.~\ref{eq:P_nu_T00} and \ref{eq:alg_P_nu_int}, we have the following identities valid for $ -1<\nu<0$:\begin{align}(1+x)^{-\nu-1} Q_{-\nu-1}(x)={}&\frac{\pi}{2}\mathcal P\int_{-1}^1(1+\xi)^{-\nu-1} P_{-\nu-1}(\xi)\frac{\D \xi}{\pi(x-\xi)},\label{eq:PQ_spec1}\\(1+x)^{\nu+1} Q_\nu(x)-\frac{2^{\nu} [\Gamma (\nu +1)]^2}{\Gamma (2 \nu +2)}={}&\frac{\pi}{2}\mathcal P\int_{-1}^1(1+\xi)^{\nu+1} P_\nu(\xi)\frac{\D \xi}{\pi(x-\xi)}.\label{eq:PQ_spec2}\end{align}Taking the last pair of equations (Eqs.~\ref{eq:PQ_spec1} and \ref{eq:PQ_spec2}) as inputs for the  Hardy-Poincar\'e-Bertrand formula (Eq.~\ref{eq:Tricomi_convolution}), we obtain the following identity for $ -1<\nu<0$ and $ -1<x<1$:\begin{align*}&\frac{\pi}{2}\mathcal P\int_{-1}^1P_{\nu}(\xi)[Q_\nu(\xi)+Q_{-\nu-1}(\xi)]\frac{\D \xi}{\pi(x-\xi)}=Q_\nu(x)Q_{-\nu-1}(x).\end{align*}According to  the relation between $ Q_\nu$ and $ P_\nu$ (Eq.~\ref{eq:def_Qnu}),  the  last equation  reduces to the Tricomi transform of $ P_\nu(\xi)P_\nu(-\xi)$ (Eq.~\ref{eq:P_nu_T}) upon analytic continuation in $\nu$. 
    \eor\end{remark}

\section{\label{sec:outlook}Discussion and Outlook}

In our evaluations of some generalized Clebsch-Gordan integrals (Eqs.~\ref{eq:T_half_half}-\ref{eq:T_sixth_sixth}), the expressions involving  Euler's gamma function actually correspond to certain special values of complete elliptic integrals as well. For example, the knowledge of $\mathbf K(1/\sqrt{2})=[\Gamma(\frac14)]^2/(4\sqrt{\pi}) $ \cite[Ref.][item~8.129.1]{GradshteynRyzhik}
allows us to recast the formula $ [\Gamma(\frac{1}{4})]^{8}/(128\pi^2)=\int_0^1[\mathbf K( \sqrt{1-k^2} )]^{3}\D k$ into the following form:\begin{align}2\left[\int_0^1\frac{\D s}{\sqrt{1-s^2}\sqrt{1-(s^2/2)}}\right]^4=\int_0^1\left[\int_0^1\frac{\D t}{\sqrt{1-t^2}\sqrt{1-(1-k^{2})t^{2}}}\right]^{3}\D k.\label{eq:alg_id}\end{align}Here, both sides are integrations of algebraic functions over algebraic domains, which qualify them as members in the ``ring of periods'' defined by Kontsevich and Zagier \cite{KontsevichZagier}. It is generally believed that identities for periods can be proved  by ``algebraic means'' \cite{KontsevichZagier}, namely, relying on nothing else than additivity of the integral, algebraic change of variables,  and the Newton-Leibniz-Stokes formula.
However, the analytic proof we produced  for the  generalized Clebsch-Gordan integrals does not  fall into such a category: we have invoked Bessel functions, which are ``exponential periods'' \cite{KontsevichZagier}. We would like to see  purely algebraic evaluations of the  generalized Clebsch-Gordan integrals when the choice of degree $\nu$ leads to  special values of complete elliptic integrals. In particular, it could be interesting to search for potential  connections to high degree modular equations and the Chowla-Selberg theory.

After communicating the first version of this manuscript to Prof.\ Jonathan M. Borwein in January 2013, I was sent a preview of a forthcoming book  \cite{LatticeSum2013} that contains a sketched proof for Eq.~\ref{eq:alg_id} using lattice sums and modular forms, totally independent of the methods presented in my work. Later this January,  James G. Wan also wrote me about his plan to give a fuller account for the proofs of his own conjectures in a joint work  with  Rogers and  Zucker, currently available as \cite{RogersWanZucker2013preprint}. Some new integrals in  \cite{RogersWanZucker2013preprint} have inspired me to compose a short sequel  \cite{Zhou2013Int3Pnu}   to the current work, in which there are further applications of the   Hardy-Poincar\'e-Bertrand formula (Eq.~\ref{eq:Tricomi_convolution}) and the Tricomi transform of $ (1+\xi)^{\nu-n} P_{\nu}(\xi),n\in\mathbb Z_{\geq0},\R(\nu-n)>-1$ (Eq.~\ref{eq:P_nu_T00}), as well as an extension of the Hansen-Heine scaling analysis in Proposition~\ref{prop:gen_CG} to the computations of other multiple elliptic integrals.

The spherical methods in this work (Legendre functions and spherical rotations) enable us to  handle a large variety of multiple elliptic integrals, but they are by no means a cure-all. The methods of Bessel moments \cite{Bailey2008,BNSW2011,BSWZ2012}, geometric transformations of Watson type \cite{Zucker2011}, hypergeometric summations \cite{Wan2012} still play  fundamental r\^oles in our quantitative understandings for integrals over elliptic integrals.

With a synthesis of various techniques, it is sometimes possible to handle integrals over the product of more than three Legendre functions of the same degree $ \nu\in\mathbb C$, such as\begin{align}
\int_{-1}^1 x[P_\nu(x)]^4\D x=\lim_{z\to\nu}\frac{2\sin^4(\pi z)[\psi^{(2)}(z+1)+\psi^{(2)}(-z)+28\zeta(3)]}{(2z+1)^2\pi^4},\quad\text{where }\psi^{(2)}(z):=\frac{\D^3}{\D z^3}\log\Gamma(z).\label{eq:4Pnu_prod}
\end{align} One may wish to compare Eq.~\ref{eq:4Pnu_prod} to  the integrals $ \int_{-1}^1 x[P_\nu(x)]^3P_\nu(-x)\D x,\nu\in\mathbb C$  (Eq.~\ref{eq:xPPPP}) that reduce to elementary functions. Clearly,
special cases of   Eq.~\ref{eq:4Pnu_prod}
bring us some interesting evaluations of multiple elliptic integrals. For example, we have the following integral representations for Ap\'ery's constant  $ \zeta(3)=\sum_{n=1}^\infty n^{-3}$:\begin{align*}\zeta(3)=-\frac{\pi^{4}}{243}\int_{-1}^1 x[P_{-1/3}(x)]^4\D x=-\frac{\pi^{4}}{168}\int_{-1}^1 x[P_{-1/4}(x)]^4\D x=-\frac{2\pi^{4}}{189}\int_{-1}^1 x[P_{-1/6}(x)]^4\D x\end{align*}as well as a critical scenario involving $ \zeta(5)=\sum_{n=1}^\infty n^{-5}$:\[ \zeta(5)=-\frac{\pi^{4}}{372}\int_{-1}^1 x[P_{-1/2}(x)]^4\D x=\frac{8}{93}\int_{0}^1(2t-1)[\mathbf K(\sqrt{t})]^4\D t.\]

\noindent \textbf{Acknowledgements} The author thanks two anonymous referees for their thoughtful comments that helped improve the presentation of this paper.
This work was partly supported by the Applied Mathematics Program within the Department of Energy (DOE) Office of Advanced Scientific Computing Research (ASCR) as part of the Collaboratory on Mathematics for Mesoscopic Modeling of Materials (CM4).
The author thanks Prof.~Weinan E (Princeton University) for his encouragements.
\bibliography{Pnu}
\bibliographystyle{unsrt}

\end{document}